\newtheorem{thm}{Theorem}[section]
\newtheorem{lem}[thm]{Lemma}
\newtheorem{cor}[thm]{Corollary}
\newtheorem{pro}[thm]{Proposition}
\newtheorem{ex}[thm]{Example}
\newtheorem{rmk}[thm]{Remark}
\newtheorem{defi}[thm]{Definition}
\newcommand {\emptycomment}[1]{}
\newcommand{\lon }{\,\rightarrow\,}
\newcommand{\be }{\begin{equation}}
\newcommand{\ee }{\end{equation}}
\newcommand{\Real}{\mathbb R}
\newcommand{\Comp}{\mathbb C}
\newcommand{\Nat}{\mathbb N}
\newcommand{\Integ}{\mathbb Z}
\newcommand{\Field}{\mathbb F}
\newcommand{\huaO}{\mathcal{O}}
\newcommand{\g}{\mathfrak g}
\newcommand{\Id}{\rm{Id}}
\newcommand{\br}[1]{   [ \cdot,    \cdot  ]   }
\newcommand{\dM}{\mathrm{d}}
\newcommand{\ad}{\mathrm{ad}}
\newcommand{\LP}{$\mathsf{LCMod}$~}
\newcommand{\GRB}{\huaO}
\newcommand{\GRBN}{\mathrm{\huaO N}}
\begin{document}

\title[Conformal $r$-matrix-Nijenhuis structures, symplectic-Nijenhuis structures and $\GRBN$-structures]{ Conformal $r$-matrix-Nijenhuis structures, symplectic-Nijenhuis structures and $\GRBN$-structures}

\author{Jiefeng Liu}
\address{School of Mathematics and Statistics, Northeast Normal University,
 Changchun 130024, Jilin, China }
\email{liujf534@nenu.edu.cn}

\author{Sihan Zhou}
\address{School of Mathematics and Statistics, Northeast Normal University,
 Changchun 130024, Jilin, China }
\email{zhoush501@nenu.edu.cn}

\author{Lamei Yuan$^{\ast}$}
\address{School of Mathematics, Harbin Institute of Technology, Harbin 150001, China }
\email{lmyuan@hit.edu.cn}
\thanks{$^{\ast}$ the corresponding author}
\vspace{-5mm}

\begin{abstract}
In this paper, we first study  infinitesimal deformations of a Lie conformal algebra and  a Lie conformal algebra with a module (called an \LP pair), which lead to the notions of Nijenhuis operator on the Lie conformal algebra and Nijenhuis structure on the \LP pair, respectively. Then by adding compatibility conditions between Nijenhuis structures and $\GRB$-operators, we introduce the notion of an $\GRBN$-structure on an \LP pair and show that an $\GRBN$-structure gives rise to a hierarchy of pairwise compatible $\GRB$-operators. In particular, we show that compatible $\huaO$-operators on a Lie conformal algebra can be characterized by Nijenhuis
operators on Lie conformal algebras. Finally, we introduce the notions of conformal $r$-matrix-Nijenhuis structure and symplectic-Nijenhuis structure on the Lie conformal algebra and study their relations.
\end{abstract}

\subjclass[2010]{17A30, 17B69, 17B62}

\keywords{Nijenhuis operator, $\GRBN$-structure, conformal $r$-matrix-Nijenhuis structure, symplectic-Nijenhuis structure}

%\subjclass[2015]{17B10, 17B56, 17\huaA42}

%\keywords{$F$-manifold algebra, pre-Lie deformation quantization, pre-$F$-manifold algebra, Rota-Baxter operator}

\maketitle

\tableofcontents

\allowdisplaybreaks

%\end{document}

\section{Introduction}

The notion of a Lie conformal algebra was introduced by Kac as an axiomatic description of the operator product expansion  of chiral fields in two-dimensional conformal field theory (\cite{Kac}). It turns out to be valuable tools in studying vertex algebras (\cite{BDK2,Bakalov,Kac,HL}), Poisson vertex algebras (\cite{BDK,LHS}) and Hamiltonian formalism in the theory of nonlinear evolution equations (\cite{BSK}). The structure theory (\cite{DAK}), representation theory (\cite{CK}) and cohomology theory (\cite{BKV,DK09}) of Lie conformal algebras have been well developed.

Motivated by the study of conformal analogues of Lie bialgebras, Liberati developed the theory of Lie conformal bialgebra in \cite{Lib}. The notion of conformal classical Yang-Baxter equation was also introduced to construct coboundary Lie conformal bialgebras. Explicitly, let $A$ be a Lie conformal algebra and $r=\sum_{i} a_i\otimes b_i\in A\otimes A$. Set $\partial^{\otimes^{3}}=\partial\otimes1\otimes1+1\otimes\partial\otimes1+1\otimes1\otimes\partial.$  The equation
\begin{eqnarray}
\nonumber \llbracket r,r\rrbracket&=&\sum_{i,j}([a_{i_{\mu}}a_{j}]\otimes b_{i}\otimes b_{j}| _{\mu=1\otimes \partial \otimes 1}-a_{i}\otimes[a_{j_{\mu}}b_{i}]\otimes b_{j}|_{\mu=1\otimes 1\otimes\partial}-a_{i}\otimes a_{j}\otimes[b_{j_{\mu}}b_{i}]|_{\mu=1\otimes\partial\otimes1})\\
 &=&0\quad\mbox{mod}\quad \partial^{\otimes^{3}}
\end{eqnarray}
is called {\bf conformal classical Yang-Baxter equation} in $A$. In particular, the skew-symmetric solutions of conformal classical Yang-Baxter equation give rise to Lie conformal bialgebras. Recently, the authors in \cite{HB} showed that $r$ is a skew-symmetric solution of the conformal classical Yang-Baxter equation in a finite Lie conformal algebra $A$ if and only if $r^{\sharp}_{0}=r^{\sharp}_{\lambda}|_{\lambda=0}$ satisfies
\begin{equation}
  [r^{\sharp}_{0}(\alpha)_{\lambda}r^{\sharp}_{0}(\beta)]=r^{\sharp}_{0}(\ad^*(r^{\sharp}_{0}(\alpha))_{\lambda}(\beta)-\ad^*(r^{\sharp}_{0}(\beta))_{-\lambda-\partial}(\alpha)), \quad \forall~ \alpha,\beta\in A^{\ast c},
\end{equation}
where $r^{\sharp}\in {\rm Chom}(A^{\ast c},A)$ is defined by
\begin{equation}
  r^{\sharp}_{\lambda}(\alpha)=\sum_i\langle\alpha,a_i\rangle_{-\lambda-\partial}b_i,\quad \forall~\alpha\in A^{\ast c},
\end{equation}
and $\ad^*$ is the coadjoint module over the $A$. More generally, the authors in \cite{HB} introduced the notion of $\huaO$-operator on a Lie conformal algebra.
\begin{defi}
    A $\Comp[\partial]$-module homomorphism $T:V\longrightarrow A$ is called an {\bf $\GRB$-operator} on an \LP pair  $(A,[\cdot_\lambda\cdot];\rho)$ if it satisfies
  \begin{equation}
  [T(u)_{\lambda}T(v)]=T(\rho(T(u))_{\lambda}(v)-\rho(T(v))_{-\lambda-\partial}(u)), \quad \forall~ u,v\in V.
\end{equation}
\end{defi}
Thus $r$ is a skew-symmetric solution of the conformal classical Yang-Baxter equation in a finite Lie conformal algebra $A$ if and only if $r^{\sharp}_{0}=r^{\sharp}_{\lambda}|_{\lambda=0}$ is an $\huaO$-operator on the \LP pair $(A;\ad^*)$. The importance of $\huaO$-operators on Lie conformal algebras is that  the skew-symmetric part of an $\huaO$-operator gives rise to a skew-symmetric solution of a conformal classical Yang-Baxter equation and the symmetric part of an $\huaO$-operator gives rise to a symmetry solution of a conformal $S$-equation, which plays an important role in the theory of left-symmetric conformal bialgebras (\cite{HL15}). In particular, when the module $\rho$ is the adjoint module, the $\huaO$-operator on a Lie conformal algebra is just the Rota-Baxter operator on the Lie conformal algebra. In the Lie algebra context, the notion of $\huaO$-operator  was introduced by Kupershmidt in \cite{Kuper1} as a tool to study the classical Yang-Baxter equation. See \cite{Bai1,HLS,LST,TBGS}  for more details and applications of $\huaO$-operators on Lie algebras.

Nijenhuis operators on Lie algebras have been introduced in the theory of integrable systems in the work of Magri, Gelfand and Dorfman (see the book \cite{Dorfman}). In the sense of the theory of deformations of Lie algebras (\cite{Nijenhuis}), Nijenhuis operators generate trivial deformations naturally. In this paper, we study the infinitesimal deformation of a Lie conformal algebra, which leads to the notion of Nijenhuis operator on the Lie conformal algebra and we show that it generates a trivial deformation of the Lie conformal algebra. See \cite{BKV} for more details on deformations of Lie conformal algebras.  We find that Nijenhuis
operators play an essential role in certain compatible structures
in terms of $\huaO$-operators. Roughly speaking, a Nijenhuis
operator ``connects'' two $\huaO$-operators on a Lie conformal algebra
whose any linear combination is still an $\mathcal O$-operator
(they are called ``compatible'' $\huaO$-operators) in some sense.

Poisson-Nijenhuis structures were defined by Magri and Morosi in 1984 in their study of completely integrable systems \cite{MM}. The importance of Poisson-Nijenhuis structures is that they produce bi-Hamiltonian systems. See \cite{Kosmann1,Kosmann2} for more details on Poisson-Nijenhuis structures. The aim of this paper is to study the theory of conformal analogues of Possion-Nijenhuis structures on Lie conformal algebras. We introduce the notion of an $\GRBN$-structure on a \LP pair (a Lie conformal algebra with a module), which consists of an $\huaO$-operator and a Nijenhuis structure on a \LP pair satisfying some compatibility conditions.  The notion of a Nijenhuis structure on a \LP pair consists of a Nijenhuis operator $N$ on $A$ and a $\Comp[\partial]$-linear map $S$ from $V$ to $V$ satisfying some compatibility conditions.  We point out that the introduction of the linear map ``$S$''  provides a general framework for the study of conformal analogues of Poisson-Nijenhuis structures.  See \cite{HLS,LBS} for more details about $\GRBN$-structures on Lie algebras and associative algebras. Furthermore, as an application of the theory of $\GRBN$-structures on \LP pairs, we introduce the notions of conformal $r$-matrix-Nijenhuis structures and symplectic-Nijenhuis structures on Lie conformal algebras, which are conformal analogues of $r$-matrix-Nijenhuis structures (\cite{Kosmann1,rn}) and symplectic-Nijenhuis structures on Lie algebras (\cite{Dorfman,Kosmann2}).

The paper is organized as follows. In Section \ref{sec:prel}, we recall the module and cohomology theory of Lie conformal algebras. In Section \ref{sec:Nijenhuis operators}, we study the infinitesimal deformations of Lie conformal algebras and \LP pairs. Then we introduce the notions of Nijenhuis operator on a Lie conformal algebra and Nijenhuis structure on a \LP pair, which give trivial deformations of the Lie conformal algebra and the \LP pair, respectively. In Section \ref{sec:ON-structures}, we add compatibility conditions between an $\GRB$-operator and a Nijenhuis structure to define the notion of $\GRBN$-structure on a \LP pair. Then we study compatible $\GRB$-operators on \LP pairs, and show that a Nijenhuis operator connects two compatible $\GRB$-operators. We also show that an $\GRBN$-structure gives rise to a hierarchy of pairwise compatible $\GRB$-operators. In Section \ref{sec:rn}, we first introduce the notion of conformal $r$-matrix-Nijenhuis structure on a Lie conformal algebra $A$, which consists of a skew-symmetric solution $r$ of the conformal classical Yang-Baxter equation and a Nijenhuis operator $N$ on $A$ such that some compatibility conditions are satisfied. We show that conformal $r$-matrix-Nijenhuis structure gives rise to a hierarchy of pairwise compatible $r$-matrices.  Furthermore, we introduce the notion of symplectic-Nijenhuis structure on a Lie conformal algebra $A$, which consists of a symplectic structure $\omega$ and a Nijenhuis operator $N$ on $A$ such that some compatibility conditions are satisfied.  The relations between $\GRBN$-structures on \LP pairs, conformal $r$-matrix-Nijenhuis structures and symplectic-Nijenhuis structures on Lie conformal algebras are given.

\section{Preliminaries on Lie conformal algebra}\label{sec:prel}
\begin{defi}
A {\bf Lie conformal algebra} $A$ is a $\Comp[\partial]$-module endowed with a $\Comp$-bilinear map $A\times A\rightarrow A[\lambda]$, denoted by $a\times b\rightarrow [a_{\lambda}b]$, satisfying
\begin{eqnarray}
\label{eq:Lie conformal1} [\partial a_{\lambda}b]&=&-\lambda[a_{\lambda}b],\\
\label{eq:Lie conformal2}  {[a_{\lambda}b]}&=&-[b_{-\lambda-\partial}a],\\
 \label{eq:Lie conformal3} {[a_{\lambda}[b_{\mu}c]]}&=&[[a_{\lambda}b]_{\lambda+\mu}c]+[b_{\mu}[a_{\lambda}c]],\quad\forall~ a,b,c\in A.
\end{eqnarray}
A Lie conformal algebra is called {\bf finite} if it is finitely generated as a $\Comp[\partial]$-module.
\end{defi}

The ordinary examples of finite Lie conformal algebra is the Virasoro and current Lie conformal algebras.
\begin{ex}
  The Virasoro Lie conformal algebra ${\rm Vir}$ is the simplest nontrivial example of Lie  conformal algebras, which is defined by
  $${\rm Vir}=\Comp[\partial]a,\quad [a_\lambda a]=(\partial+2\lambda)a.$$
\end{ex}

\begin{ex}
  Let $(\g,[\cdot,\cdot])$ be a Lie algebra. The current Lie conformal algebra associated with $\g$ is defined by
  $${\rm Cur}~\g=\Comp[\partial]\otimes\g,\quad [a_\lambda b]=[a,b],\quad \forall~a,b\in\g.$$
\end{ex}

In the following, we recall a class of Lie conformal algebras named quadratic Lie conformal algebras.
\begin{defi}
  If a Lie conformal algebra $A=\Comp[\partial] V$ with a free $\Comp[\partial]$-module and the $\lambda$-bracket is of the following form:
  $$[a_\lambda b]=\partial u+\lambda v+w,\quad \forall~a,b\in V,$$
  where $u,v,w\in V$, then $A$ is  called a {\bf quadratic Lie conformal algebra}.
\end{defi}

Recall that a {\bf Novikov algebra} is a pair $(A,\circ)$, where $A$ is a vector space and $\circ:A\otimes A\rightarrow A$ is a bilinear multiplication satisfying that for all $a,b,c\in A$,
\begin{eqnarray}
\label{eq:Nov1}  (a\circ b)\circ c&=&(a\circ c)\circ b,\\
\label{eq:Nov2}  (a\circ b)\circ c-a\circ (b\circ c)&=&(b\circ a)\circ c-b\circ (a\circ c).
\end{eqnarray}

Recall that a {\bf Gel'fand-Dorfman bialgebra} is a triple $(A,[\cdot,\cdot],\circ)$ such that $(A,[\cdot,\cdot])$ is a Lie algebra, $(A,\circ)$ is a Novikov algebra and they satisfy the following condition:
\begin{equation}\label{eq:GD condition}
  [a\circ b,c]+[a,b]\circ c-a\circ[b,c]-[a\circ c,b]-[a,c]\circ b=0.
\end{equation}

\emptycomment{Assume that $(A,\circ)$ is a  Novikov algebra. Then $(A,[\cdot,\cdot],\circ)$  is a  Gel'fand-Dorfman bialgebra, where the Lie bracket is given by
$$[a,b]=a\circ b-b\circ a,\quad\forall~a,b\in A.$$
It is obvious that }

\begin{pro}{\rm(\cite{Gel,XP})}
  A Lie conformal algebra $A=\Comp[\partial] V$ is quadratic if and only if $V$ is a Gel'fand-Dorfman bialgebra, where the correspondence is given as follows:
  $$[a_\lambda b]=\partial(b\circ a)+\lambda(a\ast b)+[b,a],$$
  where $a\ast b=a\circ b+b\circ a$ for $a,b\in V$.
\end{pro}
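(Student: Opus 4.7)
The plan is to compare the three axioms of a Lie conformal algebra, applied to the quadratic Ansatz $[a_\lambda b]=\partial(b\circ a)+\lambda(a\ast b)+[b,a]$ for $a,b\in V$, with the axioms of a Gel'fand-Dorfman bialgebra, and show they match term by term. Once the $\lambda$-bracket is prescribed on $V\times V$ as above and extended to $\Comp[\partial]V\times\Comp[\partial]V$ by the sesquilinearity rules $[\partial a_\lambda b]=-\lambda[a_\lambda b]$ and $[a_\lambda\partial b]=(\lambda+\partial)[a_\lambda b]$, axiom \eqref{eq:Lie conformal1} is automatic and imposes no constraint. Hence the whole content of being a Lie conformal algebra is encoded in \eqref{eq:Lie conformal2} and \eqref{eq:Lie conformal3} restricted to $a,b,c\in V$.

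For the skew-symmetry axiom \eqref{eq:Lie conformal2}, I would first work with a more general Ansatz $[a_\lambda b]=\partial u(a,b)+\lambda v(a,b)+w(a,b)$ with arbitrary bilinear maps $u,v,w:V\times V\to V$, compute $-[b_{-\lambda-\partial}a]$ by substituting $\mu=-\lambda-\partial$ in $[b_\mu a]=\partial u(b,a)+\mu v(b,a)+w(b,a)$, and commute $\partial$ through the $V$-coefficients as a derivation. Comparing the coefficients of $1,\lambda,\partial$ on both sides of \eqref{eq:Lie conformal2} produces exactly three identities: $v(a,b)=v(b,a)$, $u(a,b)+u(b,a)=v(a,b)$, and $w(a,b)=-w(b,a)$. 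Setting $b\circ a\defbe u(a,b)$, $a\ast b\defbe v(a,b)$ and $[b,a]\defbe w(a,b)$, the first two say that $\ast$ is symmetric and equals $a\circ b+b\circ a$, precisely the shape of $\ast$ asserted in the proposition, while the third is the skew-symmetry of $[\cdot,\cdot]$ on $V$.

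The Jacobi identity \eqref{eq:Lie conformal3} is the main step. I would expand both sides of $[a_\lambda[b_\mu c]]=[[a_\lambda b]_{\lambda+\mu}c]+[b_\mu[a_\lambda c]]$ by applying the defining formula three times, being careful to use sesquilinearity whenever $\partial$ enters an outer bracket (so that the inner $\partial$ gets replaced by $\lambda+\partial$, $\mu+\partial$, or $-(\lambda+\mu)$ as appropriate). Since $V$ generates $A$ freely as a $\Comp[\partial]$-module, the resulting identity is equivalent to the vanishing of the $V$-valued coefficient of every monomial $\lambda^i\mu^j$. The $\lambda^0\mu^0$-coefficient yields the Jacobi identity of $[\cdot,\cdot]$, the purely quadratic coefficients in $\lambda$ and $\mu$ reduce (after using $a\ast b=a\circ b+b\circ a$) to the two Novikov axioms \eqref{eq:Nov1} and \eqref{eq:Nov2}, and the remaining mixed coefficients collapse to the Gel'fand-Dorfman compatibility \eqref{eq:GD condition}.

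The main obstacle is the combinatorial bookkeeping in this expansion: each outer bracket produces three pieces, the inner $\partial$'s have to be commuted past $\lambda$ and $\mu$, and the final coefficient of each monomial involves several contributions that only collapse after invoking skew-symmetry of $[\cdot,\cdot]$ and the symmetry of $\ast$ already established in the previous step. A clean strategy is to organize the computation by total degree in $\{\lambda,\mu\}$, and within each degree by which of the operations $\circ,\ast,[\cdot,\cdot]$ the terms come from, so that the identification with \eqref{eq:Nov1}, \eqref{eq:Nov2} and \eqref{eq:GD condition} becomes transparent. The converse direction, producing a quadratic Lie conformal algebra from a Gel'fand-Dorfman bialgebra, then follows immediately by reversing every step of this coefficient comparison.
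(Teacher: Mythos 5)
The paper offers no proof of this proposition---it is quoted from \cite{Gel,XP}---so there is nothing internal to compare against; your proposal is essentially the standard argument from those references and is sound. The skew-symmetry analysis is correct as stated: starting from the general Ansatz $[a_\lambda b]=\partial u(a,b)+\lambda v(a,b)+w(a,b)$, substituting $-\lambda-\partial$ and matching coefficients does force $v$ symmetric with $v(a,b)=u(a,b)+u(b,a)$ and $w$ antisymmetric, which is exactly the asserted shape of $\ast$ and $[\cdot,\cdot]$. For the Jacobi step, one small refinement: the coefficients of $\lambda^i\mu^j$ in your expansion lie in $\Comp[\partial]V$, not in $V$, so the comparison must be carried out over monomials $\lambda^i\mu^j\partial^k$; this is legitimate precisely because the definition of a quadratic Lie conformal algebra requires $V$ to be a free $\Comp[\partial]$-module. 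With that understood, the identification of the top-degree coefficients with \eqref{eq:Nov1}--\eqref{eq:Nov2}, the mixed ones with \eqref{eq:GD condition}, and the constant term with the Jacobi identity of $[\cdot,\cdot]$ is the content of the cited computation, and reversing it gives the converse, as you say.
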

It is not hard to see that  Virasoro Lie conformal algebra is a quadratic Lie conformal algebra, which is corresponding to the Gel'fand-Dorfman bialgebra $(A,[\cdot,\cdot],\circ)$   induced by a  Novikov algebra $(V=\Comp a,\circ)$ with the multiplication $a\circ a=a$.

\begin{defi}
A {\bf module} $V$ over a Lie conformal algebra $A$ is a $\Comp [\partial]$-module endowed with a $\Comp$-bilinear map $A\times V\rightarrow V[\lambda]$, $(a,v)\rightarrow a_{\lambda}v$,satisfying the following axioms:
\begin{eqnarray}
  (\partial a)_{\lambda}v&=&-\lambda a_{\lambda}v,\\
  {a_{\lambda}(\partial v)}&=&(\partial+\lambda)a_{\lambda}v,\\
  {[a_{\lambda}b]_{\lambda+\mu}v}&=&a_{\lambda}(b_{\mu}v)-b_{\mu}(a_{\lambda}v),\quad\forall~ a,b\in A,v\in V.
\end{eqnarray}
An $A$-module $V$ is called {\bf finite} if it is finitely generated as a $\Comp [\partial]$-module.
\end{defi}

Throughout this paper, we mainly deal with $\Comp [\partial]$-modules which are finitely generated.

\begin{defi}
Let $U$ and $V$ be two $\Comp [\partial]$-modules. A {\bf conformal linear map} from $U$ to $V$ is a  $\Comp$-linear map $a: U\rightarrow V[\lambda]$, denoted by $a_{\lambda}: U\rightarrow V$, such that
$$[\partial, a_{\lambda}]=-\lambda a_{\lambda}.$$
 Denote the $\Comp$-vector space of all such maps by ${\rm Chom}(U,V)$.
\end{defi}
The space of conformal linear maps ${\rm Chom}(U,V)$ has a canonical structure of a $\Comp [\partial]$-module:
\begin{equation*}
  (\partial a)_{\lambda}=-\lambda a_{\lambda}.
\end{equation*}

Define the {\bf conformal dual} of a $\Comp[\partial]$-module $U$ as $U^{*c}={\rm Chom}(U,\Comp)$, where $\Comp$ is viewed as the trivial $\Comp[\partial]$-module, that is
  $$ U^{*c}= \{a:U\rightarrow \Comp[\lambda] \mid  \mbox{$a$ is $\Comp$-linear and } a_{\lambda}(\partial b)=\lambda a_{\lambda}b   \}.$$

Let $U$ and $V$ be finite modules over a Lie conformal algebra $A$. Then, the $\Comp[\partial]$-module ${\rm Chom}(U,V)$ has an A-module structure defined by:
\begin{equation}\label{eq:lie conformal 7}
  (a_{\lambda}\varphi)_{\mu}u=a_{\lambda}(\varphi_{\mu-\lambda}u)-\varphi_{\mu-\lambda}(a_{\lambda}u), \forall~a\in A, \varphi \in {\rm Chom}(U,V),u\in U.
\end{equation}

Let $V$ be a $\Comp [\partial]$-module and denote by ${\rm gc}(V)={\rm Chom}(V,V)$. The $\Comp[\partial]$-module ${\rm gc}(V)$ has a canonical structure of an associative conformal algebra defined by:
\begin{equation}
  (a_{\lambda}b)_{\mu}v=a_{\lambda}(b_{\mu-\lambda}v),\quad \forall~a,b\in {\rm gc}(V), v\in V.
\end{equation}
 Therefore, ${\rm gc}(V)$ has a Lie conformal algebra structure given by:
\begin{equation}
  [a_{\lambda}b]_{\mu}v=a_{\lambda}(b_{\mu-\lambda}v)-b_{\mu-\lambda}(a_{\lambda}v),\quad \forall~a,b\in {\rm gc}(V), v\in V.
\end{equation}
We call Lie conformal algebra ${\rm gc}(V)$ the {\bf general Lie conformal algebra} of $V$.

Note that the structure of a finite module $V$ over a Lie conformal algebra $A$ is the same as a homomorphism of Lie conformal algebra $\rho:A\rightarrow{\rm gc}(V)$. It is obvious that $(\Comp;\rho=0)$ is a module over the Lie conformal algebra $A$, which we call the {\bf trivial module}.

Let $(V;\rho)$ be a module over a  Lie conformal algebra $A$. Define $\rho^{*}:A\rightarrow {\rm gc}(V^{*c})$  by
\begin{equation*}
  (\rho^{*}(a)_{\lambda}\varphi)_{\mu}u=-\varphi_{\mu-\lambda}(\rho(a)_{\lambda}u)
\end{equation*}
 for all $a\in A$, $\varphi\in V^{*c}$, $u\in V$. Then $(V^*;\rho^*)$ is a module over $A$.

 Assume that $A$ is a finite Lie conformal algebra. Define $\ad:A\lon{\rm gc}(A)$ by $\ad(a)_\lambda b=[a_{\lambda}b]$ for all $a,b\in A$. Then  $(A;\ad)$ is a module over $A$, which we call the {\bf adjoint module}. Furthermore,  $(A^*;\ad^*)$ is also a module over $A$, which we call the {\bf coadjoint module}

The following conclusion is well-known.
\begin{pro}
Let $A$ be a finite Lie conformal algebra and let $(V;\rho)$ be a finite module over $A$. Then $A\oplus V$
is endowed with a $\Comp[\partial]$-module structure given by:
\begin{equation*}
  \partial(a+v)=\partial a+\partial v, \forall a\in A,v\in V.
\end{equation*}
Hence, the $\Comp[\partial]$-module $A\oplus V$ is endowed with a Lie conformal algebra structure as follows:
\begin{equation*}
  [(a+u)_{\lambda}(b+v)]=[a_{\lambda}b]+\rho(a)_{\lambda}v-\rho(b)_{-\lambda-\partial}u ,\quad \forall~a,b\in A,u,v\in V.
\end{equation*}
 This Lie conformal algebra is called the {\bf semi-direct product} of $A$ and $V$, denoted by $A\ltimes_{\rho}V$.
\end{pro}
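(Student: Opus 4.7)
The plan is to verify the three Lie conformal algebra axioms \eqref{eq:Lie conformal1}--\eqref{eq:Lie conformal3} directly for the proposed bracket on $A\oplus V$. First I would check that the bracket is well-defined as a $\Comp$-bilinear map into $(A\oplus V)[\lambda]$: the term $[a_\lambda b]$ lies in $A[\lambda]$ by hypothesis, $\rho(a)_\lambda v\in V[\lambda]$ by the module structure, and $\rho(b)_{-\lambda-\partial}u\in V[\lambda]$ is interpreted by expanding the polynomial $(-\lambda-\partial)^k$ and applying the sesquilinearity relations $\rho(\partial b)_\lambda = -\lambda\rho(b)_\lambda$ and $\rho(b)_\lambda(\partial u) = (\partial+\lambda)\rho(b)_\lambda u$. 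By $\Comp$-bilinearity of all the axioms it then suffices to verify them on the four extremal input configurations in which each argument is either entirely in $A$ or entirely in $V$.

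The first axiom $[\partial x_\lambda y]=-\lambda[x_\lambda y]$ follows termwise from the corresponding axiom in $A$ together with the first module sesquilinearity, using that $\partial$ acts diagonally on $A\oplus V$. The skew-symmetry axiom \eqref{eq:Lie conformal2} is equally routine: expanding $-[(b+v)_{-\lambda-\partial}(a+u)]$ produces $-[b_{-\lambda-\partial}a]-\rho(b)_{-\lambda-\partial}u+\rho(a)_\lambda v$, where the last term comes from the substitution $\lambda\mapsto-\lambda-\partial$ applied to $-\rho(a)_{-(-\lambda-\partial)-\partial}v$, and this matches the original bracket upon invoking the skew-symmetry of $A$.

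The main effort, and the main obstacle, is the Jacobi identity \eqref{eq:Lie conformal3}. Setting $x=a+u$, $y=b+v$, $z=c+w$, I would expand both sides and split the result into its $A$- and $V$-components. The $A$-component reduces immediately to the Jacobi identity in $A$. The $V$-component further splits into three pieces that are linear in $w$, $v$, and $u$ respectively. The $w$-linear piece is controlled directly by the module compatibility $[a_\lambda b]_{\lambda+\mu}w=\rho(a)_\lambda(\rho(b)_\mu w)-\rho(b)_\mu(\rho(a)_\lambda w)$. The $v$- and $u$-linear pieces require the same module axiom but with one of the spectral parameters replaced by $-\lambda-\mu-\partial$, and verifying them correctly demands careful propagation of the $\partial$-operator through nested $\rho$-actions via the second module axiom. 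This bookkeeping of $\lambda$, $\mu$, and $\partial$ is the only genuinely delicate point; once it is carried out, all cross-terms cancel in pairs and the Jacobi identity holds, so that $A\ltimes_\rho V$ is a Lie conformal algebra.
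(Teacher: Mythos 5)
Your proposal is correct: the paper states this proposition without proof (labelling it ``well-known''), and your direct verification of the three axioms --- sesquilinearity, skew-symmetry via the substitution $\lambda\mapsto-\lambda-\partial$, and the Jacobi identity split into its $A$-component and the three $V$-components linear in $w$, $v$, $u$ --- is exactly the standard argument one would supply. The only point worth flagging is that you correctly identify the genuinely delicate step, namely that in the $v$- and $u$-linear pieces the module axiom must be applied with one spectral parameter specialized to $-\lambda-\mu-\partial$, where the composition $\rho(a)_\lambda\circ\rho(c)_{-\mu-\partial}$ shifts the internal $\partial$ to $\partial+\lambda$; carrying that substitution through is what makes the cross-terms cancel, and your outline handles it properly.
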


\emptycomment{The tensor product $U\otimes V$ can be naturally endowed with an $A$-module structure as follows:
\begin{equation*}
  \partial(u\otimes v)=\partial u\otimes v+u \otimes \partial v
\end{equation*}
and
\begin{equation*}
  \gamma_{\lambda}(u\otimes v)=\gamma_{\lambda}u\otimes v+u\otimes\gamma_{\lambda}v
\end{equation*}
where $u\in U$,$v \in V$ and $\gamma \in A$.}

\emptycomment{The basic cohomology complex $\widetilde{\Gamma}^{\bullet}$ and the reduced cohomology complex $\Gamma^{\bullet}$,following $[BKV]$,the definition of the basic and reduced cohomology complexes associated to  Lie conformal algebra $A$ and an $A$ module $M$. A $k-cochain$ of $A$ with coefficients in $M$ is an $\Field-linear$ map.
\begin{equation*}
\widetilde{\gamma}: A^{\otimes k}\rightarrow \Field[\lambda_{1},...,\lambda_{k}]\otimes M
\end{equation*}
\begin{equation*}
  a_{1}\otimes...\otimes a_{k}\mapsto\widetilde{\gamma}_{\lambda_{1}...\lambda_{\kappa}}(a_{1},...,a_{k})
\end{equation*}
satisfying the following two conditions:

$A_{1}$ $\widetilde{\gamma}_{\lambda_{1}...\lambda_{k}}(a_{1}...\partial a_{i}...a_{\kappa})=-\lambda_{i}\widetilde{\gamma}_{\lambda_{1}...\lambda_{k}}(a_{1}...a_{k})$
for all i.\\
$A_{2}$ $\widetilde{\gamma}$ is skew-symmetric w.r.t. simultaneous permutations of the $a_{i}^{,}s$ and the $\lambda_{i}^{,}s$
\begin{rmk}
Note that Condition $A_{1}$ implies that $\widetilde{{\gamma}}_{\lambda_{1}...\lambda_{k}}(a_{1}...a_{k})$ is zero if one of the elements $a_{i}$ is a torsion element of the $\Field[\partial]-module A$.
We let $\widetilde{\Gamma}^{k}=\widetilde{\Gamma}^{k}(A,M)$ be the space of all k-cochains,and $\widetilde{\Gamma}^{\bullet}=\widetilde{\Gamma}^{\bullet}(A,M)=\oplus_{k\geqq0}\widetilde{\Gamma}^{k}$. The differential $\delta$ of a k-cochain $\widetilde{\gamma}$ is defined by the following formula:
\begin{multline}
(\delta\widetilde{\gamma})_{\lambda_{1}...\lambda_{k+1}}(a_{1}...a_{k+1})=\sum_{i=1}^{k+1}(-1)^{i+1}a_{i\lambda_{i}}(\widetilde{\gamma}_{\lambda_{1}.^{\check{i}}..\lambda_{k+1}}(a_{1},.^{\check{i}}..,a_{k+1}))\\
                 +\sum_{i,j=1,i<j}^{k+1}(-1)^{k+i+j+1}\widetilde{\gamma}_{\lambda_{1}.^{\check{i}}...^{\check{j}}..\lambda_{k+1},\lambda_{i}+\lambda_{j}}(a_{1},.^{\check{i}}..^{\check{j}}...a_{k+1},[a_{i\lambda_{i}}a_{j}])
\end{multline}
\end{rmk}
$\delta$ is a map from $\widetilde{\Gamma}^{k}$ to $\widetilde{\Gamma}^{k+1}$,and that $\delta^{2}=0$.The $\Integ$-graded space $\widetilde{\Gamma}^{\bullet}(A,M)$ with the differential $\delta$ is called the basic cohomology complex associated to $A$ and $M$.
Define the structure of an $\Field[\partial]-module$ on $\widetilde{\Gamma}^{\bullet}$ by letting
\begin{equation}
  (\partial \widetilde{\gamma})_{\lambda_{1}...\lambda_{k}}(a_{1}...a_{k})=(\partial^{M}+\lambda_{1}+...+\lambda_{k})(\widetilde{\gamma}_{\lambda_{1}...\lambda_{k}}(a_{1}...a_{k}))
\end{equation}
where $\partial^{M}$ denotes the action of $\partial$ on M.One checks that $\delta$ and $\partial$ commute, and therefore $\partial\widetilde{\Gamma}^{\bullet}\subset\widetilde{\Gamma}^{\bullet}$ is a subcomplex. We can consider the reduced cohomology complex $\Gamma^{\bullet}(A,M)=\widetilde{\Gamma}^{\bullet}(A,M)/\partial\widetilde{\Gamma}^{\bullet}(A,M)=\oplus_{k\in \Integ_{+}} \Gamma^{k}(A,M)$.
For example,$\Gamma^{0}=M/\partial^{M}M$
\subsection{Poly $\lambda$-brackets}.}

 Let $A$ and $V$ be $\Comp[\partial]$-modules. For $k\geq 1 $, a {\bf $k$-$\lambda$-bracket} on $A$ with coefficients in $V$ is a $\Comp$-linear map $c:A^{\otimes k}\rightarrow \Comp[\lambda_{1}...\lambda_{k-1}]\otimes V $, denoted by
$$a_{1}\otimes...\otimes a_{k}\mapsto \{a_{1\lambda_{1}}...a_{k-1 \lambda_{k-1}}a_{k}\}_{c}$$
satisfying the following conditions:
\begin{eqnarray}
\label{eq:deformCA}  \{a_{1\lambda_{1}}...(\partial a_{i})_{\lambda_{i}}...a_{k-1 \lambda_{k-1}}a_{k}\}_{c}&=&-\lambda_{i}\{a_{1\lambda_{1}}...a_{k-1 \lambda_{k-1}}a_{k}\}_{c},\quad 1\leq i\leq k-1,\\
\label{eq:deforLA}\{a_{1\lambda_{1}}...a_{k-1 \lambda_{k-1}}(\partial a_{k})\}_{c}&=&(\lambda_{1}+...+\lambda_{k-1}+\partial)\{a_{1\lambda_{1}}...a_{k-1\lambda_{k-1}}a_{k}\}_{c},\\
\{a_{1\lambda_{1}}...a_{k-1 \lambda_{k-1}}a_{k}\}_{c}&=&{\rm sign}(\sigma)\{a_{\sigma(1)_{\lambda_{\sigma(1)}}}...a_{\sigma(k-1)_{\lambda_{\sigma(k-1)}}}a_{\sigma(k)}\}_{c}|_{\lambda_{k}\mapsto\lambda_{k}^{+}},
\end{eqnarray}
where the notation $\lambda_{k}\mapsto\lambda_{k}^{+}$ means that $\lambda_{k}$ is replaced by $\lambda_{k}^{+}=-\sum_{j=1}^{k-1}\lambda_{j}-\partial$, if it occurs, and $\partial ^{V}$ is moved to the left.

\emptycomment{\begin{rmk}
A structure of a Lie conformal algebra on $A$ is a $2-\lambda-bracket$ on $A$ with coefficients in $A$,satisfying the Jacobi identity.
\end{rmk}}

The cohomology complex for a Lie conformal algebra $A$ with a module $V$ in the language of $\lambda$-brackets is given as follows (\cite{DK09}). We let $C^{0}(A,V)=V/\partial V$ and for $k\geq1$, we denote by $C^{k}(A,V)$ the space of all $k$-$\lambda$-brackets on $A$ with coefficients in $V$. Define $C^{\bullet}(A,V)=\oplus _{k\in\Nat}C^{k}(A,V)$. The corresponding Lie conformal algebra coboundary operator $\dM:C^k(A,V)\longrightarrow C^{k+1}(A,V)$ is given by
\begin{equation*}
  \begin{split}
     \{a_{1\lambda_{1}}\cdots a_{k\lambda_{k}}a_{k+1}\}_{\dM c}&=\sum_{i=1}^{k}(-1)^{i+1}a_{i\lambda_{i}}\{a_{1\lambda_{1}}\cdots\check{i}\cdots a_{k\lambda_{k}}a_{k+1}\}_{c}\\ &+\sum_{i,j=1,i<j}^{k}(-1)^{k+i+j+1}\{a_{1\lambda_{1}}\cdots\check{i}\cdots\check{j}\cdots a_{k\lambda_{k}}a_{k+1_{\lambda_{k+1}^{+}}}[a_{i\lambda_{i}}a_{j}]\}_{c}\\
      &+(-1)^{k}a_{k+1_{\lambda_{k+1}^{+}}}\{a_{1\lambda_{1}}\cdots a_{k-1_{\lambda_{k-1}}}a_{k}\}_{c}\\
      &+\sum_{i=1}^{k}(-1)^{i}\{a_{1\lambda_{1}}\cdots\check{i}\cdots a_{k\lambda_{k}}[a_{i\lambda_{i}}a_{k+1}]\}_{c},
  \end{split}
\end{equation*}
where $c\in C^k(A,V) $, $\lambda_{k+1}^{+}=-\sum_{j=1}^{k}\lambda_{j}-\partial$ and $a_1,\cdots,a_{k+1}\in A$. We denote by $H^{\bullet}(A,V)=\oplus _{k\in\Nat}H^{k}(A,V)$ the cohomology of the Lie conformal algebra $A$ with coefficients in $V$. In particular, we use the symbol $\dM^T$ to refer the coboundary operator   associated to
the trivial module.

\section{Nijenhuis operators and Nijenhuis structures on Lie conformal algebras}\label{sec:Nijenhuis operators}
In this section, we study infinitesimal deformations of a Lie conformal algebra and an \LP pair. The trivial deformations of the Lie conformal algebra and the \LP pair lead to the notion of Nijenhuis operator on a Lie conformal algebra and Nijenhuis structure on an \LP pair, respectively.
\subsection{Nijenhuis operators on Lie conformal algebras}
Let $A$ be a Lie conformal algebra with $\lambda$-bracket $[\cdot_{\lambda}\cdot]$ and $\omega:A\otimes A\rightarrow A[\lambda]$. Consider a $t$-parameterized family of bilinear $\lambda$-multiplications
\begin{equation}
 [a_{\lambda}b]_{t}=[a_{\lambda}b]+t\{a_{\lambda}b\}_{\omega},\quad\forall~a,b\in A.
\end{equation}
If  $(A,[\cdot_{\lambda}\cdot]_t)$ is a Lie conformal algebra for all $t$, we say
that $\omega$ generates a {\bf infinitesimal
deformation} of $(A,[\cdot_{\lambda}\cdot])$.

By a direct calculation,  we can deduce that $(A,[\cdot_{\lambda}\cdot]_t)$ is a Lie conformal algebra  for any $t\in \mathbb \Real$ if and only if
\begin{eqnarray}
 \label{eq:lie conformal13}\{\partial a_{\lambda}b\}_{\omega}&=&-\lambda\{a_{\lambda}b\}_{\omega},\\
  \label{eq:lie conformal14}\{a_{\lambda}b\}_{\omega}&=&-\{b_{-\lambda-\partial}a\}_{\omega},\\
 \label{eq:lie conformal15}
 \{a_{\lambda}\{b_{\mu}c\}_{\omega}\}_{\omega}-\{b_{\mu}\{a_{\lambda}c\}_{\omega}\}_{\omega}&=&\{\{a_{\lambda}b\}_{\omega_{\lambda+\mu}}c\}_{\omega},
\end{eqnarray}
and
\begin{equation}
   \label{eq:lie conformal16}\{[a_{\lambda}b]_{\lambda+\mu}c\}_{\omega}+[\{a_{\lambda}b\}_{\omega_{\lambda+\mu}}c]-\{a_{\lambda}[b_{\mu}c]\}_{\omega}
 -[a_{\lambda}\{b_{\mu}c\}_{\omega}]+\{b_{\mu}[a_{\lambda}c]\}_{\omega}+[b_{\mu}\{a_{\lambda}c\}_{\omega}]=0.
\end{equation}

By \eqref{eq:lie conformal13} and  \eqref{eq:lie conformal14}, we  have
\begin{eqnarray}\label{eq:lie conformal13b}
\{ a_{\lambda}(\partial b)\}_{\omega}&=&(\lambda+\partial)\{a_{\lambda}b\}_{\omega}.
\end{eqnarray}

We see that  \eqref{eq:lie conformal13}-\eqref{eq:lie conformal15} mean that $(A,\{\cdot_{\lambda}\cdot\}_\omega)$ is a Lie conformal algebra. \eqref{eq:lie conformal16} means that $\omega$ is a $2$-cocycle of the Lie conformal algebra $A$ with coefficients in the module $(A;\ad)$. Thus we have
\begin{pro}
  If  $\omega$ generates a  infinitesimal
deformation of the Lie conformal algebra $(A,[\cdot_{\lambda}\cdot])$, then $\omega$ is a $2$-cocycle of $(A,[\cdot_{\lambda}\cdot])$.
\end{pro}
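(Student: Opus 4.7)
The plan is to reduce the proposition to a direct match between equation \eqref{eq:lie conformal16} and the 2-cocycle condition $\dM\omega = 0$ in $C^3(A,A)$, where $(A;\ad)$ is the adjoint module. The hypothesis that $\omega$ generates an infinitesimal deformation means $(A,[\cdot_\lambda\cdot]_t)$ is a Lie conformal algebra for every $t$. Expanding the Jacobi identity $[a_\lambda[b_\mu c]_t]_t - [b_\mu[a_\lambda c]_t]_t - [[a_\lambda b]_{t,\,\lambda+\mu} c]_t = 0$ and collecting powers of $t$: the $t^0$ term is the Jacobi identity for $[\cdot_\lambda\cdot]$ (automatic), the $t^2$ term gives \eqref{eq:lie conformal15}, and the $t^1$ term gives \eqref{eq:lie conformal16}. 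Sesquilinearity and skew-symmetry of $\omega$ in $t$ give \eqref{eq:lie conformal13} and \eqref{eq:lie conformal14}. So the assumption directly yields \eqref{eq:lie conformal16}; it remains to recognize this as $\dM\omega = 0$.

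For this I would specialize the coboundary formula for $\dM\colon C^k(A,A)\to C^{k+1}(A,A)$ from Section \ref{sec:prel} to $k=2$, $c=\omega$, and arguments $a_1=a,\ a_2=b,\ a_3=c$ with $\lambda_1=\lambda,\ \lambda_2=\mu$. Since the module is adjoint, every action $a_{i\lambda_i}\{\cdots\}_\omega$ becomes $[a_{i\lambda_i}\{\cdots\}_\omega]$. The four sums in the formula then produce, in order: the two terms $[a_\lambda\{b_\mu c\}_\omega] - [b_\mu\{a_\lambda c\}_\omega]$; the term $\{a_{3_{\lambda_3^+}}[a_\lambda b]\}_\omega$ (from $i<j$ with $\lambda_3^+=-\lambda-\mu-\partial$); the term $a_{3_{\lambda_3^+}}\{a_\lambda b\}_\omega = [a_{3_{\lambda_3^+}}\{a_\lambda b\}_\omega]$; and finally $-\{b_\mu[a_\lambda c]\}_\omega + \{a_\lambda[b_\mu c]\}_\omega$.

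Applying the skew-symmetry of $\omega$, $\{x_\lambda y\}_\omega=-\{y_{-\lambda-\partial}x\}_\omega$, converts $\{a_{3_{\lambda_3^+}}[a_\lambda b]\}_\omega$ into $-\{[a_\lambda b]_{\lambda+\mu}c\}_\omega$, and applying skew-symmetry of the bracket $[\cdot_\lambda\cdot]$ likewise converts $[a_{3_{\lambda_3^+}}\{a_\lambda b\}_\omega]$ into $-[\{a_\lambda b\}_{\omega_{\lambda+\mu}}c]$. Assembling the six contributions gives
\begin{equation*}
\{a_\lambda b_\mu c\}_{\dM\omega} = -\bigl(\{[a_\lambda b]_{\lambda+\mu}c\}_\omega + [\{a_\lambda b\}_{\omega_{\lambda+\mu}}c] - \{a_\lambda[b_\mu c]\}_\omega - [a_\lambda\{b_\mu c\}_\omega] + \{b_\mu[a_\lambda c]\}_\omega + [b_\mu\{a_\lambda c\}_\omega]\bigr),
\end{equation*}
which is precisely $-1$ times the left-hand side of \eqref{eq:lie conformal16}. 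Hence \eqref{eq:lie conformal16} is equivalent to $\dM\omega=0$, completing the proof.

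The main obstacle is purely bookkeeping: correctly tracking the signs $(-1)^{k+i+j+1}$ and the substitution $\lambda_{k+1}\mapsto\lambda_{k+1}^+$ in the coboundary formula, and then applying the two skew-symmetries (of $\omega$ and of $[\cdot_\lambda\cdot]$) in a consistent way so that the term with $\lambda_3^+$ arising from the formula reproduces exactly the shape $\{[a_\lambda b]_{\lambda+\mu}c\}_\omega$ and $[\{a_\lambda b\}_{\omega_{\lambda+\mu}}c]$ of \eqref{eq:lie conformal16}. No deeper obstacle arises, because the coboundary formula is designed so that the Jacobi-type identity for a deformation coincides with cocycle-closedness in the adjoint complex.
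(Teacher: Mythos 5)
Your proposal is correct and follows the same route as the paper: extract \eqref{eq:lie conformal13}--\eqref{eq:lie conformal16} from the coefficients of $t$ in the axioms for $[\cdot_\lambda\cdot]_t$, and identify \eqref{eq:lie conformal16} with the condition $\dM\omega=0$ in the adjoint complex. The paper merely asserts this identification, while you verify it by specializing the coboundary formula at $k=2$ and applying the two skew-symmetries; your sign bookkeeping (the total being $-1$ times the left-hand side of \eqref{eq:lie conformal16}) checks out.
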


\begin{defi}
  Two infinitesimal deformations $A_t=(A,[\cdot_{\lambda}\cdot]_t)$ and $A'_t=(A,[\cdot_{\lambda}\cdot]'_t)$  of a
Lie conformal algebra $(A,[\cdot_{\lambda}\cdot])$, which are generated by $\omega$
and $\omega'$  respectively, are said to be {\bf
equivalent} if there exists a family of Lie conformal algebra
homomorphisms ${\Id}+tN:A_t\longrightarrow A'_t$. A deformation
is said to be {\bf trivial} if there exist a family of Lie conformal algebra homomorphisms ${\Id}+tN:A_t\longrightarrow (A,[\cdot_{\lambda}\cdot])$.
\end{defi}

By a direct calculation, $A_t$ and $A'_t$  are
equivalent infinitesimal deformations  if and only if
\begin{eqnarray}
N\circ \partial&=&\partial\circ N,\label{eq:equivalent1}\\
\{a_\lambda b\}_\omega-\{a_\lambda b\}_{\omega'}&=&[N(a)_\lambda b]+[a_\lambda N(b)]-N[a_\lambda b],\label{eq:equivalent2}\\
N\{a_\lambda b\}_\omega&=&\{N(a)_\lambda b\}_{\omega'}+\{a_\lambda N(b)\}_{\omega'}+[N (a)_\lambda N(b)],\label{eq:equivalent3}\\
\{N(a)_\lambda N(b)\}_{\omega'}&=&0.\label{eq:equivalent4}
\end{eqnarray}

It is obvious that \eqref{eq:equivalent1} means that $N$ is a $1$-cochain and $(\ref{eq:equivalent2})$ means that $\{a_\lambda b\}_\omega-\{a_\lambda b\}_{\omega'}=\{a_\lambda b\}_{\dM N}$. We summarize the above discussions into the following conclusion:
\begin{pro}\label{thm:deformation}
If two infinitesimal deformations $A_t=(A,[\cdot_{\lambda}\cdot]_t)$ and $A'_t=(A,[\cdot_{\lambda}\cdot]'_t)$  of a
Lie conformal algebra $(A,[\cdot_{\lambda}\cdot])$ generated by $\omega $ and
$\omega'$  respectively  are equivalent,  then
$\omega$ and $\omega'$ are in the same cohomology class in $H^{2}(A,A)$.
\end{pro}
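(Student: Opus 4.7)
The plan is to extract the cohomological content directly from the equivalence equations \eqref{eq:equivalent1}--\eqref{eq:equivalent4}, since the key relation \eqref{eq:equivalent2} already has the shape ``$\omega-\omega'=$ coboundary of $N$''. Concretely, I will show that under the equivalence hypothesis the map $N$ is a $1$-cochain in $C^{1}(A,A)$ (with coefficients in the adjoint module), and then verify that its Lie conformal algebra coboundary $\dM N$ is exactly $\omega-\omega'$.

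First I would unpack what it means for $N$ to be a $1$-cochain. Setting $k=1$ in the definition of a $k$-$\lambda$-bracket, condition \eqref{eq:deformCA} is vacuous, condition \eqref{eq:deforLA} becomes $N(\partial a)=\partial N(a)$, and the skew-symmetry axiom is vacuous. Thus equation \eqref{eq:equivalent1} is precisely the assertion $N\in C^{1}(A,A)$, so we may legitimately form $\dM N\in C^{2}(A,A)$.

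Second, I would compute $\dM N$ from the general coboundary formula (specialized to $k=1$, $c=N$, and with the adjoint action in place of $a_{\lambda}$). Only three terms survive: the ``$i=1$'' contribution from the first sum gives $[a_{1\lambda_{1}}N(a_{2})]$, the ``tail'' term $(-1)^{1}a_{2_{\lambda_{2}^{+}}}\{a_{1\lambda_{1}}\}_{N}$ becomes $-[N(a_{1})_{-\lambda_{2}^{+}}a_{2}]$, which by the skew-symmetry \eqref{eq:Lie conformal2} of $[\cdot_{\lambda}\cdot]$ and the identity $\lambda_{2}^{+}=-\lambda_{1}-\partial$ equals $[N(a_{1})_{\lambda_{1}}a_{2}]$, and the last sum contributes $-N[a_{1\lambda_{1}}a_{2}]$. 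Summing:
\begin{equation*}
\{a_{1\lambda_{1}}a_{2}\}_{\dM N}=[N(a_{1})_{\lambda_{1}}a_{2}]+[a_{1\lambda_{1}}N(a_{2})]-N[a_{1\lambda_{1}}a_{2}].
\end{equation*}
Comparing with \eqref{eq:equivalent2}, we immediately read off $\{a_{\lambda}b\}_{\omega}-\{a_{\lambda}b\}_{\omega'}=\{a_{\lambda}b\}_{\dM N}$, i.e.\ $\omega-\omega'=\dM N$. Hence $\omega$ and $\omega'$ represent the same class in $H^{2}(A,A)$.

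The main obstacle, such as it is, is bookkeeping: one must be careful with the ``tail'' term and with the substitution $\lambda_{k+1}\mapsto\lambda_{k+1}^{+}=-\sum_{j}\lambda_{j}-\partial$ to match signs with the skew-symmetry of $[\cdot_{\lambda}\cdot]$. Note that the remaining equivalence equations \eqref{eq:equivalent3}--\eqref{eq:equivalent4} play no role here; they encode the finer statement that $N$ is a Nijenhuis-type operator relating the two deformed brackets, which is the subject of the subsequent subsection on Nijenhuis operators, not of the cohomology-class assertion itself.
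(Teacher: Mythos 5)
Your proof is correct and takes essentially the same route as the paper, which simply observes that \eqref{eq:equivalent1} says $N$ is a $1$-cochain and \eqref{eq:equivalent2} says $\{a_\lambda b\}_\omega-\{a_\lambda b\}_{\omega'}=\{a_\lambda b\}_{\dM N}$; you merely make the coboundary computation explicit. (The only quibble is notational: after applying \eqref{eq:Lie conformal2} the tail term should read $+[N(a_{1})_{-\lambda_{2}^{+}-\partial}a_{2}]$ before substituting $\lambda_{2}^{+}=-\lambda_{1}-\partial$, but your final expression $[N(a_{1})_{\lambda_{1}}a_{2}]$ is right.)
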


Now we consider  trivial deformations of a Lie conformal algebra $(A,[\cdot_{\lambda}\cdot])$. Then \eqref{eq:equivalent1}-\eqref{eq:equivalent4} reduce to
\begin{eqnarray}
N\circ \partial&=&\partial\circ N,\label{Nij1}\\
\{a_\lambda b\}_\omega&=&[N(a)_\lambda b]+[a_\lambda N(b)]-N[a_\lambda b],\label{Nij2}\\
N\{a_\lambda b\}_\omega&=&[N (a)_\lambda N(b)].\label{Nij3}
\end{eqnarray}

\emptycomment{\begin{proof}
since $[-_{\lambda}-]_{t}$ and $[-_{\lambda}-]$ satisfied $\eqref{eq:Lie conformal1}$
we can get $\{\partial a_{\lambda}b\}_{\omega}=-\lambda\{a_{\lambda}b\}_{\omega}$

since  $[-_{\lambda}-]_{t}$ and $[-_{\lambda}-]$ satisfied $\eqref{eq:Lie conformal2}$
then we get $ \{a_{\lambda}b\}_{\omega}=-\{b_{-\lambda-\partial}a\}_{\omega}$

since$[a_{1\lambda_{1}}[a_{2\lambda_{2}}a_{3}]_{t}]_{t}=[[a_{1\lambda_{1}}a_{2}]_{t_{\lambda_{1}+\lambda_{2}}}a_{3}]_{t}+[a_{2\lambda_{2}}[a_{1\lambda_{1}}a_{3}]_{t}]_{t}$
 \begin{equation*}
    \begin{split}
  [a_{1\lambda_{1}}[a_{2\lambda_{2}}a_{3}]_{t}]_{t} &= [a_{1\lambda_{1}}[a_{2\lambda_{2}} a_{3}]]_{t}+t[a_{1\lambda_{1}}\{a_{2\lambda_{2}}a_{3}\}_{\omega}]_{t}\\
  &=[a_{1\lambda_{1}}[a_{2\lambda_{2}}a_{3}]]+t(\{a_{1\lambda_{1}}[a_{2\lambda_{2}}a_{3}]\}_{\omega}+[a_{1\lambda_{1}}\{a_{2\lambda_{2}}a_{3}\}_{\omega}])+t^{2}\{a_{1\lambda_{1}}\{a_{2\lambda_{2}}a_{3}\}_{\omega}\}_{\omega}
    \end{split}
 \end{equation*}

 \begin{equation*}
    \begin{split}
    [a_{2\lambda_{2}}[a_{1\lambda_{1}}a_{3}]_{t}]_{t} &= [a_{2\lambda_{2}}[a_{1\lambda_{1}} a_{3}]]_{t}+t[a_{2\lambda_{2}}\{a_{1\lambda_{1}}a_{3}\}_{\omega}]_{t}\\
  &=[a_{2\lambda_{2}}[a_{1\lambda_{1}}a_{3}]]+t(\{a_{2\lambda_{2}}[a_{1\lambda_{1}}a_{3}]\}_{\omega}+[a_{2\lambda_{2}}\{a_{1\lambda_{1}}a_{3}\}_{\omega}])+t^{2}\{a_{2\lambda_{2}}\{a_{1\lambda_{1}}a_{3}\}_{\omega}\}_{\omega}
    \end{split}
 \end{equation*}

 \begin{equation*}
   \begin{split}
   [[a_{1\lambda_{1}}a_{2}]_{t_{\lambda_{1}+\lambda_{2}}}a_{3}]_{t}&=[([a_{1\lambda_{1}}a_{2}]+t\{a_{1\lambda_{1}}a_{2}\}_{\omega})_{\lambda_{1}+\lambda_{2}}a_{3}]_{t}\\
   &=[[a_{1\lambda_{1}}a_{2}]_{\lambda_{1}+\lambda_{2}}a_{3}]_{t}+t[\{a_{1\lambda_{1}}a_{2}\}_{\omega_{\lambda_{1}+\lambda_{2}}}a_{3}]_{t}\\
   &=[[a_{1\lambda_{1}}a_{2}]_{\lambda_{1}+\lambda_{2}}a_{3}]+t\{[a_{1\lambda_{1}}a_{2}]_{\lambda_{1}+\lambda_{2}}a_{3}\}_{\omega}+t[\{a_{1\lambda_{1}}a_{2}\}_{\omega_{\lambda_{1}+\lambda_{2}}}a_{3}]
   +t^{2}\{ \{a_{1\lambda_{1}}a_{2}\}_{\omega_{\lambda_{1}+\lambda_{2}}}a_{3}\}_{\omega}
 \end{split}
 \end{equation*}
 by the coefficients of t and $t^{2}$  we can get\eqref{eq:lie conformal15} \eqref{eq:lie conformal16}.
 \end{proof}}

\emptycomment{we can know that \eqref{eq:lie conformal15} is equal to $d\omega=0$,since
\begin{equation*}
  \begin{split}
     \{a_{1\lambda_{1}}a_{2\lambda_{2}}a_{3}\}_{d\omega}&= [a_{1\lambda_{1}}\{a_{2\lambda_{2}}a_{3}\}_{\omega}]-[a_{2\lambda_{2}}\{a_{1\lambda_{1}}a_{3}\}_{\omega}]+[a_{3\lambda_{3}}\{a_{1\lambda_{1}}a_{2}\}_{\omega}]\\
       &+\{a_{3\lambda_{3}}[a_{1\lambda_{1}}a_{2}]\}_{\omega}-\{a_{2\lambda_{2}}[a_{1\lambda_{1}}a_{3}]\}_{\omega}+\{a_{1\lambda_{1}}[a_{2\lambda_{2}}a_{3}]\}_{\omega}\mid _{\lambda_{3}\rightarrow \lambda_{3}^{+}} \\
       &=[a_{1\lambda_{1}}\{a_{2\lambda_{2}}a_{3}\}_{\omega}]-[a_{2\lambda_{2}}\{a_{1\lambda_{1}}a_{3}\}_{\omega}]+[a_{3_{-\partial^{M}-\lambda_{1}-\lambda_{2}}}\{a_{1\lambda_{1}}a_{2}\}_{\omega}]\\
       &+\{a_{3_{-\partial^{M}-\lambda_{1}-\lambda_{2}}}[a_{1\lambda_{1}}a_{2}]\}_{\omega}-\{a_{2\lambda_{2}}[a_{1\lambda_{1}}a_{3}]\}_{\omega}+\{a_{1\lambda_{1}}[a_{2\lambda_{2}}a_{3}]\}_{\omega}\\
       &=[a_{1\lambda_{1}}\{a_{2\lambda_{2}}a_{3}\}_{\omega}]-[a_{2\lambda_{2}}\{a_{1\lambda_{1}}a_{3}\}_{\omega}]-[\{a_{1\lambda_{1}}a_{2}\}_{\omega_{\lambda_{1}+\lambda_{2}}}a_{3}]\\
       &-\{[a_{1\lambda_{1}}a_{2}]_{\lambda_{1}+\lambda_{2}}a_{3}\}_{\omega}-\{a_{2\lambda_{2}}[a_{1\lambda_{1}}a_{3}]\}_{\omega}+\{a_{1\lambda_{1}}[a_{2\lambda_{2}}a_{3}]\}_{\omega}\\
       &=\eqref{eq:lie conformal15}=0
   \end{split}
\end{equation*}}

\emptycomment{\begin{defi}
A deformation is said to be trivial if there exists a linear operator $N:A\rightarrow A$ such for $\phi_{t}=Id+tN$
  $$\phi_{t}[a_{\lambda}b]_{t}=[(\phi_{t}a)_{\lambda}(\phi_{t}b)]$$
  The triviality of deformation is equivalent to the conditions:
  \begin{eqnarray}
    \label{eq:lie conformal17}\{a_{\lambda}b\}_{\omega}&=&[a_{\lambda}Nb]+[Na_{\lambda}b]-N[a_{\lambda}b]\\
     \label{eq:lie conformal18}N\{a_{\lambda}b\}_{\omega}&=&[Na_{\lambda}Nb]
  \end{eqnarray}
\end{defi}
\begin{proof}
\begin{equation*}
  \begin{split}
     \phi_{t}[a_{\lambda}b]_{t}&=(id+tN)([a_{\lambda}b]_{t})\\
       &=[a_{\lambda}b]_{t}+tN[a_{\lambda}b]_{t}\\
       &=[a_{\lambda}b]+t\{a_{\lambda}b\}_{\omega}+tN([a_{\lambda}b]+t\{a_{\lambda}b\}_{\omega})\\
       &=[a_{\lambda}b]+t\{a_{\lambda}b\}_{\omega}+tN[a_{\lambda}b]+t^{2}N\{a_{\lambda}b\}_{\omega}
   \end{split}
\end{equation*}
\begin{equation*}
  \begin{split}
     [\phi_{t}a_{\lambda}\phi_{t}b]&=[(id+tN)a_{\lambda}(id+tN)b] \\
       &=[(a+tNa)_{\lambda}(b+tNb)]\\
       &=[a_{\lambda}b]+t[Na_{\lambda}b]+t[a_{\lambda}Nb]+t^{2}[Na_{\lambda}Nb]
   \end{split}
\end{equation*}
 by the coefficients of t and $t^{2}$ we can get \eqref{eq:lie conformal17} \eqref{eq:lie conformal18}.
\end{proof}}
\emptycomment{we know that \eqref{eq:lie conformal17} is equal to $\omega=dN$ since
$$ \{a_{\lambda}b\}_{dN}=\{a_{\lambda}b\}_{\omega}=[a_{\lambda}Nb]+[Na_{\lambda}b]-N[a_{\lambda}b]=\{a_{\lambda}b\}_{\omega}$$}

 It follows from \eqref{Nij2} and \eqref{Nij3} that $N$ should satisfy the following condition
 \begin{equation}
   \label{eq:Nijenhuis condition} N\big([N(a)_{\lambda}(b)]+[a_{\lambda}N(b)]-N[a_{\lambda}b]\big)=[N(a)_{\lambda}N(b)].
 \end{equation}

The following definition first appears in the study of Nijenhuis operators on associative algebras.
 \begin{defi}{\rm(\cite{Yuan})}
 Let $(A,[\cdot_{\lambda}\cdot])$ be a Lie conformal algebra. A $\Comp[\partial]$-module homomorphism $N:A\rightarrow A$ is called a {\bf Nijenhuis operator} if \eqref{eq:Nijenhuis condition} holds.
 \end{defi}

\begin{thm}\label{thm:lie conformal3.4}
Let $N:A\rightarrow A$ be a Nijenhuis operator. A infinitesimal deformation of $A$ can be obtained by putting
$$ \{a_{\lambda}b\}_{\omega}=[N(a)_{\lambda}b]+[a_{\lambda}N(b)]-N[a_{\lambda}b],\quad\forall~a,b\in A.$$
Moreover, this deformation is trivial.
\end{thm}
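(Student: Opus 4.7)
The plan is to organize the entire proof around the candidate map $\phi_t := \Id_A + tN$, which I will show intertwines $[\cdot_\lambda\cdot]_t$ with the undeformed bracket in the sense that $\phi_t([a_\lambda b]_t) = [\phi_t(a)_\lambda \phi_t(b)]$. Once established, this single identity simultaneously delivers triviality and lets me transfer the Lie conformal axioms from $(A,[\cdot_\lambda\cdot])$ to $A_t := (A,[\cdot_\lambda\cdot]_t)$, so both claims follow at once.

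The first step is to verify the intertwining by expanding both sides as polynomials of degree at most $2$ in $t$ and matching coefficients. The $t^0$-coefficients agree trivially; the $t^1$-coefficients agree by the very definition $\{a_\lambda b\}_\omega = [N(a)_\lambda b] + [a_\lambda N(b)] - N[a_\lambda b]$; and the $t^2$-coefficients reduce to $N\{a_\lambda b\}_\omega = [N(a)_\lambda N(b)]$, which is exactly the Nijenhuis condition \eqref{eq:Nijenhuis condition}. This is the only step that uses the Nijenhuis hypothesis and is where the real content of the theorem lies.

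Next I would verify the Lie conformal axioms for $[\cdot_\lambda\cdot]_t$. Conformal sesquilinearity \eqref{eq:Lie conformal1} and skew-symmetry \eqref{eq:Lie conformal2} follow term by term from the corresponding properties of $[\cdot_\lambda\cdot]$ together with $N\circ\partial = \partial\circ N$, applied to the explicit formula for $\{\cdot_\lambda\cdot\}_\omega$. For the Jacobi identity \eqref{eq:Lie conformal3}, let $J_t(a,b,c)$ denote the Jacobiator of $[\cdot_\lambda\cdot]_t$, a polynomial in $t$ of degree at most $2$; applying the intertwining of Step~1 twice gives $\phi_t\circ J_t = J_0\circ\phi_t^{\otimes 3}$, and the right-hand side vanishes since $J_0$ is the Jacobiator of $[\cdot_\lambda\cdot]$. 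Writing $J_t = J^{(0)} + tJ^{(1)} + t^2J^{(2)}$, the expansion $\phi_tJ_t = J^{(0)} + t(NJ^{(0)} + J^{(1)}) + t^2(NJ^{(1)} + J^{(2)}) + t^3NJ^{(2)}$ combined with $\phi_tJ_t \equiv 0$ forces $J^{(0)} = J^{(1)} = J^{(2)} = 0$ in turn, so Jacobi holds. Triviality is then immediate, as Step~1 precisely states that $\phi_t = \Id + tN$ is a Lie conformal algebra homomorphism $A_t \to (A,[\cdot_\lambda\cdot])$, which is the defining property of a trivial deformation.
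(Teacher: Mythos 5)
Your proposal is correct, and for the hardest part it takes a genuinely different route from the paper. The paper checks sesquilinearity and skew-symmetry of $\{\cdot_\lambda\cdot\}_\omega$ directly (as you do), gets the mixed condition \eqref{eq:lie conformal16} for free from the cohomological observation $\omega=\dM N$, and then proves the Jacobi identity \eqref{eq:lie conformal15} for $\{\cdot_\lambda\cdot\}_\omega$ by a page-long expansion of the Jacobiator $J(a_1,a_2,a_3)$ in which the Nijenhuis identity is invoked three separate times. You instead front-load the homomorphism property of $\phi_t=\Id+tN$ (whose only nontrivial content, the $t^2$-coefficient, is exactly the Nijenhuis condition \eqref{eq:Nijenhuis condition}), and then obtain the entire Jacobi identity for $[\cdot_\lambda\cdot]_t$ from $\phi_t\circ J_t=J_0\circ\phi_t^{\otimes 3}=0$ together with the unitriangular coefficient extraction $J^{(0)}=0\Rightarrow J^{(1)}=0\Rightarrow J^{(2)}=0$; this delivers \eqref{eq:lie conformal15} and \eqref{eq:lie conformal16} simultaneously and makes triviality tautological. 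Your argument is cleaner and replaces the paper's brute-force computation with a structural one; its only mild cost is that you must note the intertwining identity extends $\Comp[\mu]$-linearly so it can be applied to $[b_\mu c]_t\in A[\mu]$ in the inner slot, and that $\phi_t$ commutes with $\partial$ because $N$ is a $\Comp[\partial]$-module map — both immediate.
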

\begin{proof}
By \eqref{eq:Lie conformal1} and $N(\partial a)=\partial  N(a)$, we have
\begin{equation*}
 \begin{split}
  \{\partial a_{\lambda} b\}_{\omega}&=[\partial a_{\lambda}N(b)]+[N(\partial a)_{\lambda}b]-N^{2}[\partial a_{\lambda}b]\\
  &=-\lambda[a_{\lambda}N(b)]-\lambda[N(a)_{\lambda}(b)+\lambda N^{2}[a_{\lambda}b]=-\lambda \{a_{\lambda}b\}_{\omega},
 \end{split}
\end{equation*}
which proves \eqref{eq:lie conformal13}.

By \eqref{eq:Lie conformal2}, we have
\begin{equation*}
  \begin{split}
     \{a_{\lambda}b\}_{\omega} &=[a_{\lambda}N(b)]+[N(a)_{\lambda}b]-N^{2}[a_{\lambda}b]\\
       &=-([b_{-\lambda-\partial}N(a)]-[N(b)_{-\lambda-\partial}a]-N^{2}[b_{-\lambda-\partial} a])\\
       &=-\{b_{-\lambda-\partial}a\}_{\omega},
   \end{split}
\end{equation*}
which proves \eqref{eq:lie conformal14}.

Since $\omega=dN$, we get $d\omega=d(dN)=0$ and thus \eqref{eq:lie conformal16} holds.

Next we  prove that the Jacobi identity holds for $\{\cdot_\lambda\cdot\}_\omega$. Put
\begin{equation*}
  J(a_1,a_2,a_{3})=\{a_{1\lambda_{1}}\{a_{2\lambda_{2}}a_{3}\}_{\omega}\}_{\omega}-\{a_{2\lambda_{2}}\{a_{1\lambda_{1}}a_{3}\}_{\omega}\}_{\omega}
  -\{\{a_{1\lambda_1}a_{2}\}_{\omega_{\lambda_{1}+\lambda_{2}}}a_{3}\}_{\omega}.
\end{equation*}
Substituting the expression of $\{\cdot_\lambda\cdot\}_\omega$ into this formula and using the property of Nijenhuis operator on $A$, we have
\begin{eqnarray*}
     J(a_1,a_2,a_{3})&=&[a_{1_{\lambda_{1}}}[Na_{2\lambda_{2}}Na_{3}]]+[Na_{1\lambda_{1}}[a_{2\lambda_{2}}Na_{3}]]-N[a_{1\lambda_{1}}[a_{2\lambda_{2}}Na_{3}]]\\ &&+[Na_{1\lambda_{1}}[Na_{2\lambda_{2}}a_{3}]]-N[a_{1\lambda_{1}}[Na_{2\lambda_{2}}a_{3}]]-[Na_{1\lambda_{1}}N[a_{2\lambda_{2}}a_{3}]]\\
     &&-[a_{2_{\lambda_{2}}}[Na_{1\lambda_{1}}Na_{3}]]-[Na_{2\lambda_{2}}[a_{1\lambda_{1}}Na_{3}]]+N[a_{2\lambda_{2}}[a_{1\lambda_{1}}Na_{3}]]-[Na_{2\lambda_{2}}[Na_{1\lambda_{1}}a_{3}]]\\
     &&+N[a_{1\lambda_{1}}N[a_{2\lambda_{2}}a_{3}]]+N[a_{2\lambda_{2}}[Na_{1\lambda_{1}}a_{3}]]+[Na_{2\lambda_{2}}N[a_{1\lambda_{1}}a_{3}]]\\
     &&-N[a_{2\lambda_{2}}N[a_{1\lambda_{1}}a_{3}]]-[[a_{1\lambda_{1}}Na_{2}]_{\lambda_{1}+\lambda_{2}}Na_{3}]-[[Na_{1\lambda_{1}}Na_{2}]_{\lambda_{1}+\lambda_{2}}a_{3}]\\
     &&+N[[a_{1\lambda_{1}}Na_{2}]_{\lambda_{1}+\lambda_{2}}a_{3}]-[[Na_{1\lambda_{1}}a_{2}]_{\lambda_{1}+\lambda_{2}}Na_{3}]+N[[Na_{1\lambda_{1}}a_{2}]_{\lambda_{1}+\lambda_{2}}a_{3}]\\
     &&+[N[a_{1\lambda_{1}}a_{2}]_{\lambda_{1}+\lambda_{2}}Na_{3}]-N[N[a_{1\lambda_{1}}a_{2}]_{\lambda_{1}+\lambda_{2}}a_{3}].
\end{eqnarray*}
Due to the validity of the Jacobi identity of $[\cdot_\lambda\cdot]$,  we have
\begin{eqnarray*}
      J(a_1,a_2,a_{3})&=&-N[a_{1\lambda_{1}}[a_{2\lambda_{2}}Na_{3}]]-N[a_{1\lambda_{1}}[Na_{2\lambda_{2}}a_{3}]]-[Na_{1\lambda_{1}}N[a_{2\lambda_{2}}a_{3}]]\\
      &&+N[a_{1\lambda_{1}}N[a_{2\lambda_{2}}a_{3}]]+N[a_{2\lambda_{2}}[a_{1\lambda_{1}}Na_{3}]]+N[a_{2\lambda_{2}}[Na_{1\lambda_{1}}a_{3}]]\\
      &&+[Na_{2\lambda_{2}}N[a_{1\lambda_{1}}a_{3}]]-N[a_{2\lambda_{2}}N[a_{1\lambda_{1}}a_{3}]]+N[[a_{1\lambda_{1}}Na_{2}]_{\lambda_{1}+\lambda_{2}}a_{3}]\\
      &&+N[[Na_{1\lambda_{1}}a_{2}]_{\lambda_{1}+\lambda_{2}}a_{3}]+[N[a_{1\lambda_{1}}a_{2}]_{\lambda_{1}+\lambda_{2}}Na_{3}]-N[N[a_{1\lambda_{1}}a_{2}]_{\lambda_{1}+\lambda_{2}}a_{3}].
\end{eqnarray*}
Since $N$ is a Nijenhuis operator,we have
\begin{eqnarray*}
  -{[Na_{1\lambda_{1}}N[a_{2\lambda_{2}}a_{3}]]}+N[a_{1\lambda_{1}}N[a_{2\lambda_{2}}a_{3}]]&=&-N[Na_{1\lambda_{1}}[a_{2\lambda_{2}}a_{3}]]+N^{2}[a_{1\lambda_{1}}[a_{2\lambda_{2}}a_{3}]],\\
  {[Na_{2\lambda_{2}}N[a_{1\lambda_{1}}a_{3}]]}-N[a_{2\lambda_{2}}N[a_{1\lambda_{1}}a_{3}]]&=&N[Na_{2\lambda_{2}}[a_{1\lambda_{1}}a_{3}]]+N^{2}[a_{2\lambda_{2}}[a_{1\lambda_{1}}a_{3}]],\\
  {[N[a_{1\lambda_{1}}a_{2}]_{\lambda_{1}+\lambda_{2}}Na_{3}]}-N[N[a_{1\lambda_{1}}a_{2}]_{\lambda_{1}+\lambda_{2}}a_{3}]&=&N[[a_{1\lambda_{1}}a_{2}]_{\lambda_{1}+\lambda_{2}}Na_{3}]-N^{2}[[a_{1\lambda_{1}}a_{2}]_{\lambda_{1}+\lambda_{2}}a_{3}].
\end{eqnarray*}
Again using the Jacobi identity of $[\cdot_\lambda\cdot]$,  we have
\begin{eqnarray*}
     J(a_{1\lambda_{1}}a_{2\lambda_{2}}a_{3})&=&-N\big([a_{1\lambda_{1}}[a_{2\lambda_{2}}Na_{3}]]-[a_{2\lambda_{2}}[a_{1\lambda_{1}}Na_{3}]]-[[a_{1\lambda_{1}}a_{2}]_{\lambda_{1}+\lambda_{2}}Na_{3}]\big)\\
       &&+[a_{1\lambda_{1}}[Na_{2\lambda_{2}}a_{3}]]-[a_{2\lambda_{2}}[Na_{1\lambda_{1}}a_{3}]]-[[a_{1\lambda_{1}}Na_{2}]_{\lambda_{1}+\lambda_{2}}a_{3}]\\
       &&+[Na_{1\lambda_{1}}[a_{2\lambda_{2}}a_{3}]]-[Na_{2\lambda_{2}}[a_{1\lambda_{1}}a_{3}]]-[[Na_{1\lambda_{1}}a_{2}]_{\lambda_{1}+\lambda_{2}}a_{3}]\\
       &&+N^{2}([a_{1\lambda_{1}}[a_{2\lambda_{2}}a_{3}]]-[a_{2\lambda_{2}}[a_{1\lambda_{1}}a_{3}]]-[[a_{1\lambda_{1}}a_{2}]_{\lambda_{1}+\lambda_{2}}a_{3}])\\
       &=&0.
\end{eqnarray*}
Thus $\omega$ generates a infinitesimal deformation of $A$.

Furthermore, \eqref{Nij1}-\eqref{Nij3} are satisfied and therefore  $\omega$ generates a trivial deformation of $A$.
\end{proof}

\begin{cor}\label{cor:Nijenhuis}
Let $(A,[\cdot_\lambda\cdot])$ be a Lie conformal algebra and $N$ a Nijenhuis operator on $A$. Then $(A,[\cdot_\lambda\cdot]_N)$ is a Lie conformal algebra and $N$ is a Lie conformal algebra homomorphism from $(A,[\cdot_\lambda\cdot]_N)$ to $(A,[\cdot_\lambda\cdot])$, where the deformed bracket $[\cdot_\lambda\cdot]_N$ is given by
\begin{equation}\label{eq:deformed bracket}
\{a_{\lambda}b\}_{N}=[N(a)_{\lambda}b]+[a_{\lambda}N(b)]-N[a_{\lambda}b],\quad\forall~a,b\in A.
\end{equation}
\end{cor}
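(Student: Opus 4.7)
The plan is to obtain this corollary as an essentially immediate consequence of Theorem \ref{thm:lie conformal3.4}. That theorem established that the bracket $\omega = \{\cdot_\lambda\cdot\}_\omega$ defined exactly by the formula in \eqref{eq:deformed bracket} satisfies \eqref{eq:lie conformal13}, \eqref{eq:lie conformal14} and the Jacobi identity \eqref{eq:lie conformal15}. But these three identities are precisely the axioms \eqref{eq:Lie conformal1}--\eqref{eq:Lie conformal3} in the definition of a Lie conformal algebra applied to $\{\cdot_\lambda\cdot\}_N$. So the first claim follows by reading off the content of Theorem \ref{thm:lie conformal3.4} at the single bracket $\{\cdot_\lambda\cdot\}_\omega$ rather than at the full $t$-parameterized family.

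For the second claim, I need to verify the two requirements of being a Lie conformal algebra homomorphism $N\colon (A,\{\cdot_\lambda\cdot\}_N)\to (A,[\cdot_\lambda\cdot])$. The commutation $N\circ\partial=\partial\circ N$ is built into the hypothesis that $N$ is a $\Comp[\partial]$-module homomorphism. The bracket-preservation
\[
N\{a_\lambda b\}_N = [N(a)_\lambda N(b)]
\]
is exactly the Nijenhuis condition \eqref{eq:Nijenhuis condition} after substituting the definition of $\{\cdot_\lambda\cdot\}_N$ from \eqref{eq:deformed bracket}; equivalently, it is condition \eqref{Nij3} which was already recorded as part of the trivial-deformation data in the proof of Theorem \ref{thm:lie conformal3.4}.

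There is essentially no obstacle here beyond transcription: the corollary simply repackages what was proved in Theorem \ref{thm:lie conformal3.4}. The only point worth noting in the write-up is that although Theorem \ref{thm:lie conformal3.4} phrases things in terms of infinitesimal deformations, the identities \eqref{eq:lie conformal13}--\eqref{eq:lie conformal15} are $t$-independent statements about $\{\cdot_\lambda\cdot\}_\omega$ alone and therefore yield the Lie conformal algebra structure on $(A,\{\cdot_\lambda\cdot\}_N)$ directly.
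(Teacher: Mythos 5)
Your proposal is correct and matches the paper's intent exactly: the corollary is stated without proof precisely because it is the content of Theorem \ref{thm:lie conformal3.4} read off at the deformed bracket itself, with \eqref{eq:lie conformal13}--\eqref{eq:lie conformal15} giving the Lie conformal algebra axioms for $\{\cdot_\lambda\cdot\}_N$ and \eqref{Nij3} (equivalently the Nijenhuis condition \eqref{eq:Nijenhuis condition}) giving the homomorphism property. No gaps.
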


\emptycomment{Similar to the properties of Nijenhuis operators on Lie algebras given in \cite{Kosmann1}, we also have
\begin{pro}\label{lem:Niejproperty}
  Let $(A,[\cdot_\lambda\cdot])$ be a Lie conformal algebra and $N$ a Nijenhuis operator on $A$. For all $k,l\in\Nat$,
  \begin{itemize}
\item[$\rm(i)$]$(A,[\cdot_\lambda\cdot]_{N^k})$ is a Lie conformal algebra;
\item[$\rm(ii)$]$N^l$ is also a Nijenhuis operator on the Lie conformal algebra $(A,[\cdot_\lambda\cdot]_{N^k})$;
\item[$\rm(iii)$]The Lie conformal algebras $(A,([\cdot_\lambda\cdot]_{N^k})_{N^l})$ and $(A,[\cdot_\lambda\cdot]_{N^{k+l}})$ coincide;
\item[$\rm(iv)$]The Lie conformal algebras $(A,[\cdot_\lambda\cdot]_{N^k})$ and $(A,[\cdot_\lambda\cdot]_{N^l})$ are
compatible, that is,
any linear combination of $(A,[\cdot_\lambda\cdot]_{N^k})$ and $(A,[\cdot_\lambda\cdot]_{N^l})$ still
makes $A$ into a Lie conformal algebra;
\item[$\rm(v)$]$N^l$ is a Lie conformal algebra homomorphism from $(A,[\cdot_\lambda\cdot]_{N^{k+l}})$ to $(A,[\cdot_\lambda\cdot]_{N^k})$.
  \end{itemize}
\end{pro}
\begin{proof}
  It follows by a straightforward calculation. We omit the details.
\end{proof}}

\begin{ex}
  Let $(A,[\cdot_\lambda\cdot])$ be a Lie conformal algebra,  which can be decomposed as a sum of two subalgebras $A=A_1\oplus A_2$. For any $k_1,k_2\in\Comp$, define $N:A\rightarrow A$ by
  $$N\mid_{A_i}=k_i\Id_{A_i},\quad i=1,2.$$
  Then $N$ is a Nijenhuis operator on the Lie conformal algebra $A$.
\end{ex}

Recall that a {\bf Nijenhuis operator} $N$ on a Lie algebra $(\g,[\cdot,\cdot])$ is a linear operator $N:\g\longrightarrow \g$ satisfying
$$N([N(x),y]+[x,N(y)]-N([x,y]))=[N(x),N(y)],\quad \forall~x,y\in\g.$$
Then $(\g,[\cdot,\cdot]_N)$ is a Lie algebra and $N$ is a morphism
from the Lie algebra $(\g,[\cdot,\cdot]_N)$ to
$(\g,[\cdot,\cdot])$, where the bracket  $[\cdot,\cdot]_N$ is
defined by
$$[x,y]_N=[N(x),y]+[x,N(y)]-N([x,y]),\;\;\forall x,y\in \g.$$

\begin{ex}
  Let $N$ be a Nijenhuis operator on  a Lie algebra $(\g,[\cdot,\cdot])$. Then $\tilde{N}$ defined by
  \begin{equation}\label{ex:Nij-current}
  \tilde{N}(f(\partial)a)=f(\partial)N(a),\quad \forall~f(\partial)\in\Comp[\partial],a\in \g
  \end{equation}
   is a Nijenhuis operator on the current Lie conformal algebra ${\rm Cur}~\g=\Comp[\partial]\otimes\g$.
\end{ex}

\begin{defi}
  A {\bf Nijenhuis operator} $N$ on a Novikov algebra $(A,\circ)$ is a linear operator $N:A\longrightarrow A$ satisfying
$$N(N(a)\circ b+a\circ N(b)-N(a\circ b))=N(a)\circ N(b),\quad \forall~a,b\in A.$$
\end{defi}
\begin{pro}\label{pro:Novikov-Nij}
  Let $N$ be a Nijenhuis operator on a Novikov algebra $(A,\circ)$. Then $(A,\circ_N)$ is also a Novikov algebra and furthermore, $N$ is a Novikov algebra homomorphism from $(A,\circ_N)$ to $(A,\circ)$.
\end{pro}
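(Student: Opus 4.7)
The plan is to work with the deformed multiplication
\[
a \circ_{N} b \defbe N(a)\circ b + a\circ N(b) - N(a\circ b),
\]
which is forced by the general shape of a Nijenhuis-type deformation (compare with Corollary \ref{cor:Nijenhuis} in the Lie conformal setting). First I would dispose of the homomorphism statement, since it is essentially built into the definition of a Nijenhuis operator: applying $N$ to $a\circ_{N} b$ gives
\[
N(a\circ_{N} b) = N\bigl(N(a)\circ b + a\circ N(b) - N(a\circ b)\bigr) = N(a)\circ N(b),
\]
where the second equality is exactly the Nijenhuis condition. So once $(A,\circ_{N})$ is shown to be a Novikov algebra, the map $N:(A,\circ_{N})\to (A,\circ)$ is automatically an algebra homomorphism.

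Next I would verify the right-commutativity axiom \eqref{eq:Nov1} for $\circ_{N}$. Expanding both sides of $(a\circ_{N} b)\circ_{N} c = (a\circ_{N} c)\circ_{N} b$ using the definition produces, on each side, a sum of terms involving at most two applications of $N$; the doubly-$N$ terms are controlled by the homomorphism identity just established (applied to the outer product), while the remaining terms are of the form $(x\circ y)\circ z$ where one of $x,y,z$ has $N$ applied. Grouping pairs of matching terms and invoking \eqref{eq:Nov1} for $\circ$ (with various insertions of $N$) should make the difference $(a\circ_{N} b)\circ_{N} c - (a\circ_{N} c)\circ_{N} b$ collapse to zero. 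The same strategy handles \eqref{eq:Nov2}: expand $(a\circ_{N} b)\circ_{N} c - a\circ_{N}(b\circ_{N} c)$ and its swap in $a,b$, then match terms using \eqref{eq:Nov2} for $\circ$ together with the identity $N(x\circ_{N} y)=N(x)\circ N(y)$.

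The main obstacle is purely combinatorial: after a single expansion, each of the two axioms becomes a sum of roughly a dozen terms, and one must argue that the Nijenhuis identity plus the Novikov identities for $\circ$ cancel them in just the right pattern. The cleanest way I would organize the bookkeeping is to introduce the trilinear ``Nijenhuis torsion'' $T(a,b) = N(a)\circ N(b) - N(a\circ_{N} b)$, which vanishes by hypothesis, and repeatedly substitute $N(a\circ_{N} b)= N(a)\circ N(b)$ whenever an outer $N$ meets a $\circ_{N}$-product during the expansion, so that every remaining term lies in $(A,\circ)$ with at most single insertions of $N$. After this reduction, \eqref{eq:Nov1} and \eqref{eq:Nov2} for $\circ_{N}$ follow term-by-term from \eqref{eq:Nov1} and \eqref{eq:Nov2} for $\circ$ applied to the triples obtained by replacing one or two of $a,b,c$ by $N(a), N(b), N(c)$. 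This completes both parts of the proposition.
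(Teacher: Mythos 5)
Your proposal is correct and follows essentially the same route as the paper: the homomorphism identity $N(a\circ_N b)=N(a)\circ N(b)$ is immediate from the Nijenhuis condition, and the paper verifies \eqref{eq:Nov1} for $\circ_N$ by exactly the expansion-and-grouping you describe, i.e.\ the difference $(a\circ_N b)\circ_N c-(a\circ_N c)\circ_N b$ collapses into blocks of the form $(x\circ y)\circ z-(x\circ z)\circ y$ (possibly with $N$ or $N^2$ applied outside) for triples $(x,y,z)$ obtained from $(a,b,c)$ by inserting $N$, each of which vanishes by \eqref{eq:Nov1}. The only divergence is that for the left-symmetry axiom \eqref{eq:Nov2} the paper simply cites the known fact from \cite{WBLS} that a Nijenhuis deformation of a left-symmetric algebra is left-symmetric, whereas you redo that computation directly; your version is self-contained at the cost of a second bookkeeping argument. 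One small imprecision in your sketch: after substituting $N(x\circ_N y)=N(x)\circ N(y)$ the surviving terms are not all ``at most single insertions of $N$'' --- a block $N^2\big((a\circ b)\circ c-(a\circ c)\circ b\big)$ remains --- but it cancels by the same mechanism, so this does not affect the argument.
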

\begin{proof}
It was shown in \cite{WBLS} that $(A,\circ_N)$ is a left-symmetric algebra, which means that \eqref{eq:Nov2} holds for the operation $\circ_N$. Also, by the fact that $N$ is a   Nijenhuis operator on the Novikov algebra $(A,\circ)$, we have
\begin{eqnarray*}
 && (a\circ_N b)\circ_N c-(a\circ_N c)\circ_N c\\
  &=&\big((N(a)\circ N(b))\circ c-(N(a)\circ c)\circ N(b)\big)+\big((N(a)\circ b)\circ N(c)-(N(a)\circ N(c))\circ b\big)\\
  &&+\big((a\circ N(b))\circ N(c)-(a\circ N(c))\circ N(b)\big)-N\big((a\circ b)\circ N( c)-(a\circ N( c))\circ b\big)\\
  &&-N\big((N(a)\circ b)\circ  c-(N(a)\circ c)\circ b\big)-N\big((a\circ N(b))\circ  c-(a\circ  c)\circ N(b)\big)\\
  &&+N^2\big((a\circ b)\circ c- (a\circ c)\circ b \big).
\end{eqnarray*}
By \eqref{eq:Nov1}, the above equality implies that $(a\circ_N b)\circ_N c=(a\circ_N c)\circ_N c$ holds. Thus $(A,\circ_N)$ is also a Novikov algebra. The rest is direct.
\end{proof}

\begin{defi}
 Let $(A,[\cdot,\cdot],\circ)$  be a Gel'fand-Dorfman bialgebra. A linear map $N:A\rightarrow A$ is called a {\bf Nijenhuis operator} on the Gel'fand-Dorfman bialgebra $A$ if $N$ is both a Nijenhuis operator on the Lie algebra $(A,[\cdot,\cdot])$ and a Nijenhuis operator on the Novikov algebra $(A,\circ)$.
\end{defi}

\begin{pro}
  Let $N$ be a Nijenhuis operator on a Gel'fand-Dorfman bialgebra $(A,[\cdot,\cdot],\circ)$. Then $(A,[\cdot,\cdot]_N,\circ_N)$ is also a Gel'fand-Dorfman bialgebra and furthermore, $N$ is a Gel'fand-Dorfman bialgebra homomorphism from $(A,[\cdot,\cdot]_N,\circ_N)$ to $(A,[\cdot,\cdot],\circ)$.
\end{pro}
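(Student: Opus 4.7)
The plan is to verify the three defining axioms of a Gel'fand-Dorfman bialgebra for $(A,[\cdot,\cdot]_N,\circ_N)$ and then the homomorphism property of $N$.

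First, the deformed bracket $[\cdot,\cdot]_N$ makes $A$ into a Lie algebra, and $N\colon(A,[\cdot,\cdot]_N)\to(A,[\cdot,\cdot])$ is a Lie algebra morphism satisfying $N[a,b]_N=[N(a),N(b)]$; this is the classical Nijenhuis statement for Lie algebras recalled in the paper just above the statement. Second, Proposition~\ref{pro:Novikov-Nij} gives the analogous statement for the Novikov product, namely $(A,\circ_N)$ is Novikov and $N(a\circ_N b)=N(a)\circ N(b)$. These two homomorphism identities together say immediately that $N$ is a Gel'fand-Dorfman bialgebra morphism from $(A,[\cdot,\cdot]_N,\circ_N)$ to $(A,[\cdot,\cdot],\circ)$ once the next step is in place.

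The heart of the argument is to check the Gel'fand-Dorfman compatibility \eqref{eq:GD condition} for the pair $([\cdot,\cdot]_N,\circ_N)$. Introduce the trilinear map
\[
F_{\ast_1,\ast_2}(a,b,c)=(a\ast_2 b)\ast_1 c+(a\ast_1 b)\ast_2 c-a\ast_2(b\ast_1 c)-(a\ast_2 c)\ast_1 b-(a\ast_1 c)\ast_2 b,
\]
so that the original hypothesis reads $F_{[\cdot,\cdot],\circ}\equiv 0$. Expanding $F_{[\cdot,\cdot]_N,\circ_N}(a,b,c)$ using the defining formulas of $[\cdot,\cdot]_N$ and $\circ_N$ produces a sum of terms built from $[\cdot,\cdot]$ and $\circ$ with various $N$'s inserted in the arguments or applied outside. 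The strategy is to organize the expansion by the number of ``outer'' $N$'s (the $N^{0}$-, $N^{1}$- and $N^{2}$-pieces) and to collapse inner $N$'s using the two Nijenhuis identities
\[
N\bigl([N(x),y]+[x,N(y)]-N[x,y]\bigr)=[N(x),N(y)],\qquad
N\bigl(N(x)\circ y+x\circ N(y)-N(x\circ y)\bigr)=N(x)\circ N(y).
\]
After these reductions, the surviving terms reassemble into instances of $F_{[\cdot,\cdot],\circ}$ evaluated at triples such as $(N(a),N(b),c)$, $(N(a),b,N(c))$, $(a,N(b),N(c))$, together with $N$ applied to $F_{[\cdot,\cdot],\circ}(N(a),b,c)$, $F_{[\cdot,\cdot],\circ}(a,N(b),c)$, $F_{[\cdot,\cdot],\circ}(a,b,N(c))$, and $N^{2}F_{[\cdot,\cdot],\circ}(a,b,c)$. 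Each of these vanishes by the original Gel'fand-Dorfman compatibility.

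The main obstacle is the combinatorial bookkeeping: the raw expansion produces forty-five terms (nine for each of the five summands of $F_{[\cdot,\cdot]_N,\circ_N}$) carrying up to three internal $N$'s each, and these must be carefully cancelled and regrouped. This is entirely parallel to the proof of Theorem~\ref{thm:lie conformal3.4}, where the Jacobi identity for the deformed conformal bracket was verified by the same split-by-outer-$N$ strategy together with repeated use of the Nijenhuis identity; I expect no conceptual obstacle beyond careful handling of indices.
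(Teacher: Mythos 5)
Your proposal is correct and matches the paper's own proof essentially verbatim: the paper likewise quotes the Lie and Novikov Nijenhuis results for the two individual products and then verifies the compatibility \eqref{eq:GD condition} for $([\cdot,\cdot]_N,\circ_N)$ by expanding and regrouping the forty-five terms into exactly the seven blocks you describe (the three doubly-$N$-substituted instances, minus $N$ applied to the three singly-substituted instances, plus $N^2$ applied to the original), each vanishing by the hypothesis. No gap; the bookkeeping you defer is precisely the displayed identity in the paper's proof.
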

\begin{proof}
  Since $N$ is a Nijenhuis operator on the Lie algebra $(A,[\cdot,\cdot])$, $(A,[\cdot,\cdot]_N)$ is a Lie algebra. Similarly, $(A,\circ_N)$ is also a Novikov algebra. Furthermore, by the properties of Nijenhuis operator on Gel'fand-Dorfman bialgebra, by a direct calculation, we have
  \begin{eqnarray*}
     &&[a\circ_N b,c]_N+[a,b]_N\circ_N c-a\circ_N[b,c]_N-[a\circ_N c,b]_N-[a,c]_N\circ_N b\\
     &=&\big([N(a)\circ N(b),c]+[N(a),N(b)]\circ c-N(a)\circ[N(b),c]-[N(a)\circ c,N(b)]-[N(a),c]\circ N(b)\big)\\
     &&+\big([N(a)\circ b,N(c)]+[N(a),b]\circ N(c)-N(a)\circ[b,N(c)]-[N(a)\circ N(c),b]-[N(a),N(c)]\circ b\big)\\
     &&+\big([a\circ N(b),N(c)]+[a,N(b)]\circ N(c)-a\circ[N(b),N(c)]-[a\circ N(c),N(b)]-[a,N(c)]\circ N(b)\big)\\
     &&-N\big([N(a)\circ b,c]+[N(a),b]\circ c-N(a)\circ[b,c]-[N(a)\circ c,b]-[N(a),c]\circ b\big)\\
     &&-N\big([a\circ N(b),c]+[a,N(b)]\circ c-a\circ[N(b),c]-[a\circ c,N(b)]-[a,c]\circ N(b)\big)\\
     &&-N\big([a\circ b,N(c)]+[a,b]\circ N(c)-a\circ[b,N(c)]-[a\circ N(c),b]-[a,N(c)]\circ b\big)\\
     &&+N^2\big([a\circ b,c]+[a,b]\circ c-a\circ[b,c]-[a\circ c,b]-[a,c]\circ b\big).
  \end{eqnarray*}
  By \eqref{eq:GD condition}, the above equality implies that
  $$[a\circ_N b,c]_N+[a,b]_N\circ_N c-a\circ_N[b,c]_N-[a\circ_N c,b]_N-[a,c]_N\circ_N b=0.$$
  Thus $(A,[\cdot,\cdot]_N,\circ_N)$ is a Gel'fand-Dorfman bialgebra.
\end{proof}

\begin{pro}
  Let $N$ be a Nijenhuis operator on a Gel'fand-Dorfman bialgebra $(A,[\cdot,\cdot],\circ)$. Then $\tilde{N}$ defined by
  $$\tilde{N}(f(\partial)a)=f(\partial)N(a)$$
  is a Nijenhuis operator on the quadratic Lie conformal algebra $A=\Comp[\partial] V$ with the following $\lambda$-bracket
  $$[a_\lambda b]=\partial(b\circ a)+\lambda(a\ast b)+[b,a].$$
  Furthermore, $(A,[\cdot_\lambda \cdot]_N)$ is also a quadratic Lie conformal algebra with the $\lambda$-bracket $[\cdot_\lambda \cdot]_{\tilde{N}} $ given by
  $$[a_\lambda b]_{\tilde{N}}=\partial(b\circ_{\tilde{N}} a)+\lambda(a\ast_{\tilde{N}} b)+[b,a]_{\tilde{N}},$$
  where $a\ast_{\tilde{N}} b=a\circ_{\tilde{N}} b+b\circ_{\tilde{N}} a$ and this quadratic Lie conformal algebra corresponds to the Gel'fand-Dorfman bialgebra $(A,[\cdot,\cdot]_N,\circ_N)$.
\end{pro}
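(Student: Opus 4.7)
The plan is to verify both claims by reducing to computations on elements of $V$, using the $\Comp[\partial]$-linearity of $\tilde{N}$ and the sesqui-linearity of the conformal $\lambda$-bracket. Since $\tilde{N}$ commutes with $\partial$, applying the Nijenhuis identity \eqref{eq:Nijenhuis condition} with $\partial a$ (or $\partial b$) in place of $a$ (resp.\ $b$) multiplies both sides by $-\lambda$ (resp.\ by $\lambda+\partial$); hence it is enough to check the identity for $a,b\in V$.

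The key computation is to expand $[\tilde{N}(a)_\lambda b] + [a_\lambda \tilde{N}(b)] - \tilde{N}[a_\lambda b]$ using the quadratic formula $[a_\lambda b] = \partial(b\circ a) + \lambda(a\ast b) + [b,a]$. Collecting the coefficients of $\partial$, of $\lambda$ and the constant term gives, respectively,
\begin{align*}
C_\partial &= b\circ N(a) + N(b)\circ a - N(b\circ a),\\
C_\lambda  &= N(a)\ast b + a\ast N(b) - N(a\ast b),\\
C_0        &= [b,N(a)] + [N(b),a] - N[b,a].
\end{align*}
Applying $\tilde{N}$ amounts to applying $N$ to each $C_\bullet$ separately. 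The Nijenhuis identities for the Novikov algebra $(A,\circ)$ and for the Lie algebra $(A,[\cdot,\cdot])$ immediately give $N(C_\partial)=N(b)\circ N(a)$ and $N(C_0)=[N(b),N(a)]$. For $C_\lambda$, I would split $a\ast b = a\circ b + b\circ a$ and regroup to obtain
$C_\lambda = \bigl(N(a)\circ b + a\circ N(b) - N(a\circ b)\bigr) + \bigl(b\circ N(a) + N(b)\circ a - N(b\circ a)\bigr)$;
two applications of the Novikov Nijenhuis identity then yield $N(C_\lambda) = N(a)\circ N(b) + N(b)\circ N(a) = N(a)\ast N(b)$. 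Reassembling produces $\partial(N(b)\circ N(a)) + \lambda(N(a)\ast N(b)) + [N(b),N(a)] = [\tilde{N}(a)_\lambda \tilde{N}(b)]$, which is exactly the Nijenhuis condition.

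For the second assertion the computation above is essentially all that is needed: by definition $[a_\lambda b]_{\tilde{N}}$ equals $\partial\,C_\partial + \lambda\,C_\lambda + C_0$, and the regrouping above identifies $C_\partial = b\circ_N a$, $C_0 = [b,a]_N$ and $C_\lambda = a\circ_N b + b\circ_N a = a\ast_N b$. Corollary \ref{cor:Nijenhuis} makes $(A,[\cdot_\lambda\cdot]_{\tilde{N}})$ into a Lie conformal algebra, while the preceding proposition guarantees that $(A,[\cdot,\cdot]_N,\circ_N)$ is a Gel'fand-Dorfman bialgebra; the resulting quadratic formula $[a_\lambda b]_{\tilde{N}} = \partial(b\circ_N a) + \lambda(a\ast_N b) + [b,a]_N$ then exhibits $(A,[\cdot_\lambda\cdot]_{\tilde{N}})$ as the quadratic Lie conformal algebra attached to $(A,[\cdot,\cdot]_N,\circ_N)$ under the correspondence cited earlier. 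The main technical subtlety is the treatment of $C_\lambda$: the Novikov Nijenhuis identity is not symmetric in its two arguments, so the cancellation in $N(C_\lambda)$ requires splitting $\ast$ and invoking the identity twice with the arguments in both orders; the rest is routine $\Comp[\partial]$-linear bookkeeping.
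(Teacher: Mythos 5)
Your proposal is correct and is exactly the ``direct calculation'' that the paper's proof omits: expand the deformed bracket on generators $a,b\in V$, collect the coefficients of $\partial$, $\lambda$ and the constant term, apply the Novikov and Lie Nijenhuis identities (the $\lambda$-coefficient needing the Novikov identity in both argument orders), and extend to all of $\Comp[\partial]V$ by sesquilinearity. No gaps; the identification $C_\partial=b\circ_N a$, $C_\lambda=a\ast_N b$, $C_0=[b,a]_N$ also gives the second assertion as you state.
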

\begin{proof}
  It follows by a direct calculation.
\end{proof}

\begin{ex}
 The Virasoro Lie conformal algebra is a quadratic Lie conformal algebra, which is induced by a  Novikov algebra $(V=\Comp a,\circ)$ with the multiplication $a\circ a=a$. It is straightforward to check that for any $k\in \Comp$, $N(a)=ka$ is a Nijenhuis operator on the Novikov algebra $(V=\Comp a,\circ)$. Thus $\tilde{N}$ defined by
  $$\tilde{N}(f(\partial)a)=kf(\partial)a$$
  is a Nijenhuis operator on the Virasoro Lie conformal algebra $\Comp[\partial] a$. Furthermore, any Nijenhuis operator on the Virasoro Lie conformal algebra $\Comp[\partial] a$ is of this form.
\end{ex}

The following example says that not all Nijenhuis operators on quadratic Lie conformal algebra induced by the Nijenhuis operators on Gel'fand-Dorfman bialgebra.
\begin{ex}\label{ex:N-operator rank2}
  Let $A=\Comp[\partial]a\oplus \Comp[\partial]b$ be a quadratic Lie conformal algebra of rank $2$ with the $\lambda$-bracket given by:
  $$[a_\lambda a]=(\partial+2\lambda)a,\quad [a_\lambda b]=(\partial+\lambda)b,\quad [b_\lambda b]=0,$$
  which is induced by a  Novikov algebra $(V=\Comp a\oplus \Comp b,\circ)$ with the multiplication
  $$a\circ a=a,\quad a\circ b=0,\quad b\circ a=b,\quad b\circ b=0.$$
  Then $N$ given by
  $$N(a)=f(\partial)b,\quad N(b)=0,\quad f(\partial)\in \Comp[\partial]$$
  is a Nijenhuis operator on the Lie conformal algebra $A=\Comp[\partial]a\oplus \Comp[\partial]b$, which is not induced by any of the Nijenhuis operator on the Novikov algebra $(V=\Comp a\oplus \Comp b,\circ)$.
\end{ex}

\subsection{Nijenhuis structures on \LP pairs }\label{sec:Nijenhuis structures}
\begin{defi}
An {\bf \LP pair} consists of a Lie conformal algebra $(A,[\cdot_\lambda\cdot])$ and a finite module $(V;\rho)$ over $A$. We denote an \LP pair by $(A,[\cdot_\lambda\cdot];\rho)$ or simply by $(A;\rho)$.
\end{defi}

Let $(A;\rho)$ be an \LP pair. Let $ \omega:A\otimes A \rightarrow A[\lambda] $ and $\varpi:A\rightarrow {\rm gc}(V)$ be linear maps. Consider a $t$-parametrized family of bracket operations and linear maps
\begin{eqnarray*}
[a_{\lambda}b]_{t}&=&[a_{\lambda}b]+t\omega_{\lambda}(a,b),\\
 \rho_{t}(a)_{\mu}(v)&=&\rho(a)_{\mu}(v)+t\varpi(a)_{\mu}(v).
\end{eqnarray*}
If $(A,[\cdot_{\lambda}\cdot]_{t})$ is a Lie conformal algebra and $(V;\rho_{t})$ is a module for any $t\in \Real$, we say that $(\omega,\varpi)$ generates a {\bf infinitesimal deformation} of the \LP pair $(A;\rho)$. We denote a infinitesimal deformation of the \LP pair $(A,[\cdot_\lambda\cdot];\rho)$ by $(A,[\cdot_{\lambda}\cdot]_{t};\rho_{t})$.

By a direct calculation,  we can deduce that $(A,[\cdot_{\lambda}\cdot]_t;\rho_t)$ is an \LP pair  for any $t\in \mathbb \Real$ if and only if $(\omega,\varpi)$ satisfies \eqref{eq:lie conformal13}-\eqref{eq:lie conformal16} and
\begin{eqnarray}
\label{eq:lie conformal26}\rho(\{a_{\lambda}b\}_{\omega})_{\mu}(v)&+&\rho(b)_{\mu-\lambda}\varpi(a)_{\lambda}(v)-\rho(a)_{\lambda}\varpi(b)_{\mu-\lambda}(v)\\
\nonumber&=&\varpi(a)_{\lambda}\rho(b)_{\mu-\lambda}(v)-\varpi[a_{\lambda}b]_{\mu}(v)-\varpi(b)_{\mu-\lambda}\rho(a)_{\lambda}(v),\\
\label{eq:lie conformal27}  \varpi(\{a_{\lambda}b\}_{\omega})_{\mu}(v)&=&
 \varpi(a)_{\lambda}(\varpi(b)_{\mu-\lambda}(v))-\varpi(b)_{\mu-\lambda}(\varpi(a)_{\lambda}(v)),
\end{eqnarray}
where $a,b\in A$ and $v\in V$.

\begin{defi}
Two infinitesimal deformations $(A,[\cdot_{\lambda}\cdot]_{t};\rho_{t})$ and $(A',[\cdot_{\lambda}\cdot];\rho_t')$ of an \LP pair $(A,[\cdot_\lambda\cdot];\rho)$ are {\bf equivalent} if for $N:A\longrightarrow A$ and $S:V\longrightarrow V$, there exists an isomorphism $({\Id}_{A}+tN,{\Id}_{V}+tS)$ from  $(A',[\cdot_{\lambda}\cdot];\rho_t')$ to $(A,[-_{\lambda}-]_{t},\rho_{t})$, i.e. for all $a,b\in A$ and $v\in V$,
\begin{eqnarray*}
({\Id}_{A}+tN)[a_{\lambda}b]'_{t}&=&[({\Id}_{A}+tN)(a)_{\lambda}({\Id}_{A}+tN)(b)],\\
({\Id}_{V}+tS)\rho'_{t}(a)_{\mu}(v)&=&\rho(({\Id}_{A}+tN)(a))_{\mu}({\Id}_{V}+tS)(v).
\end{eqnarray*}
A infinitesimal deformation of a \LP pair $(A,[\cdot_\lambda\cdot];\rho)$ is said to be {\bf trivial} if it is equivalent to $(A,[-_{\lambda}-];\rho)$.
\end{defi}

It is straightforward to check that $(A,[-_{\lambda}-]_{t};\rho_{t})$ is a trivial deformation if and only if
\begin{eqnarray}
 \label{eq:lie conformalcom1}N\circ \partial&=&\partial\circ N,\\
 \label{eq:lie conformalcom2}S\circ \partial&=&\partial\circ S,\\
  \label{eq:lie conformal28}\{a_{\lambda}b\}_{\omega}&=&[a_{\lambda}Nb]+[Na_{\lambda}b]-N[a_{\lambda}b],\\
    \label{eq:lie conformal29} N\{a_{\lambda}b\}_{\omega}&=&[Na_{\lambda}Nb],\\
  \label{eq:lie conformal30} \varpi(a)_{\mu}(v)&=&\rho(N(a))_{\mu}(v)+\rho(a)_{\mu} S(v)-S\rho(a)_{\mu}(v),\\
   \label{eq:lie conformal31}\rho(N(a))_{\mu} S(v)&=&S  \varpi(a)_{\mu}(v).
\end{eqnarray}
 It follows from \eqref{eq:lie conformalcom1}, \eqref{eq:lie conformal28} and \eqref{eq:lie conformal29} that $N$ is a Nijenhuis operator on the Lie conformal algebra $(A,[\cdot_\lambda\cdot])$.
 It follows from \eqref{eq:lie conformal30} and \eqref{eq:lie conformal31} that
 \begin{equation}
   \label{eq:lie conformal33}\rho(N(a))_{\mu}(S(v))=S(\rho(N(a))_{\mu}(v))+S(\rho(a)_{\mu}S(v))-S^{2}(\rho(a)_{\mu}(v)).
 \end{equation}

In a trivial infinitesimal deformation of an \LP pair $(A;\rho)$, $N$ is a Nijenhuis operator on the Lie conformal algebra $A$ and conditions \eqref{eq:lie conformalcom2} and \eqref{eq:lie conformal33} hold. In fact, the converse is also true.
\begin{thm}\label{thm:trivial deform}
 If $N$ is a Nijenhuis operator on the Lie conformal algebra $A$ and  $S$ satisfies \eqref{eq:lie  conformalcom2} and \eqref{eq:lie conformal33}, then a deformation of the \LP pair $(A;\rho)$ can be obtained by putting
  \begin{eqnarray}
   \{a_{\lambda}b\}_{\omega}&=&[a_{\lambda}Nb]+[Na_{\lambda}b]-N[a_{\lambda}b],\\
  \varpi(a)_{\mu}(v)&=& \rho(N(a))_{\mu}(v)+\rho(a)_{\mu}S(v)-S\rho(a)_{\mu}(v),\quad\forall~a,b\in A,v\in V.
 \end{eqnarray}
Furthermore, this deformation is trivial.
\end{thm}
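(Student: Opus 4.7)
The plan is to split the proof into two parts: (i) verify that $(\omega,\varpi)$ defined by the displayed formulas satisfies the deformation conditions \eqref{eq:lie conformal13}--\eqref{eq:lie conformal16} and \eqref{eq:lie conformal26}--\eqref{eq:lie conformal27}, so that $(A,[\cdot_\lambda\cdot]_t;\rho_t)$ is genuinely a one-parameter family of \LP pairs; and (ii) verify the triviality conditions \eqref{eq:lie conformalcom1}--\eqref{eq:lie conformal31}, taking $N$ and $S$ themselves as the witnesses of the equivalence to the undeformed structure.

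Part (ii) is almost formal. The Lie conformal algebra axioms for $[\cdot_\lambda\cdot]_t$, namely \eqref{eq:lie conformal13}--\eqref{eq:lie conformal16}, are already provided by Theorem \ref{thm:lie conformal3.4} since $N$ is a Nijenhuis operator. Among the remaining triviality identities: \eqref{eq:lie conformalcom1} is the $\Comp[\partial]$-linearity of $N$; \eqref{eq:lie conformalcom2} is given; \eqref{eq:lie conformal28} and \eqref{eq:lie conformal30} are the definitions of $\omega$ and $\varpi$; \eqref{eq:lie conformal29} is the Nijenhuis identity \eqref{eq:Nijenhuis condition}; and \eqref{eq:lie conformal31}, when the definition of $\varpi$ is substituted on the right-hand side, becomes exactly the hypothesis \eqref{eq:lie conformal33}.

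The work is in part (i), specifically in the deformed module axioms. For \eqref{eq:lie conformal26}, I would substitute both definitions, separate the resulting terms by their $S$-content, and apply the original module identity $\rho([c_\nu d])_\mu = [\rho(c)_\nu,\rho(d)_{\mu-\nu}]$ with $c\in\{Na,a\}$ and $d\in\{b,Nb\}$; after routine bookkeeping the $S$-free terms cancel in three triples via the module axiom, the terms containing $\rho(\cdot)_{\cdot}S(v)$ cancel against the contribution coming from $-\varpi([a_\lambda b])_\mu S(v)$, the terms with $S$ sandwiched between two $\rho$'s cancel in pairs, and the terms with $S$ on the outside collapse by the module axiom again.

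The serious step is the verification of \eqref{eq:lie conformal27}, which is quadratic in $\varpi$. After expanding $\varpi$, I would organise terms by their $S$-degree ($0$, $1$, or $2$): the $S$-free terms reduce to an identity that follows from the Nijenhuis condition \eqref{eq:Nijenhuis condition} on $N$ combined with the module axiom for $\rho$; the terms linear in $S$ reduce purely by the module axiom; and the terms quadratic in $S$, i.e.\ those involving $S^2$, $S\rho S$, or $\rho(N(\cdot))\, S$, collapse precisely because of the hypothesis \eqref{eq:lie conformal33} applied to both $a$ and $b$. This is the technical heart of the argument and the main obstacle; it is where the otherwise \emph{ad hoc} compatibility \eqref{eq:lie conformal33} does its work, playing for $(S,\rho)$ the role that the Nijenhuis identity plays for $N$ in Theorem \ref{thm:lie conformal3.4}. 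Once \eqref{eq:lie conformal26}--\eqref{eq:lie conformal27} are established, combining with part (ii) gives both the claimed deformation and its triviality.
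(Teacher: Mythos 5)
Your plan is correct and is precisely the direct verification that the paper dismisses as ``a straightforward computation'' with no details given: the triviality conditions are indeed formal as you say, and for the two module conditions the ingredients you name (the module axiom for $\rho$, the Nijenhuis identity for $N$, and the hypothesis \eqref{eq:lie conformal33}) are exactly sufficient --- I checked that \eqref{eq:lie conformal27} closes up, with \eqref{eq:lie conformal33} entering twice, once to commute $S$ past $\rho(N(\cdot))$ and once to kill the residual terms. Two minor corrections to your bookkeeping, neither of which affects the argument: \eqref{eq:lie conformal26} actually follows from the module axiom alone (no Nijenhuis condition and no \eqref{eq:lie conformal33} are needed there), and your grading by ``$S$-degree'' in \eqref{eq:lie conformal27} is slightly misleading because \eqref{eq:lie conformal33} is not homogeneous in $S$ alone --- the clean grading is by total degree in $N$ and $S$ together. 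A shorter route, worth noting, is supplied by the proposition immediately following the theorem: the hypotheses are equivalent to $N+S$ being a Nijenhuis operator on $A\ltimes_{\rho}V$, and applying Theorem \ref{thm:lie conformal3.4} to the semidirect product and reading off the $A$- and $V$-components of the deformed bracket reproduces $(\omega,\varpi)$, so the entire expansion of \eqref{eq:lie conformal27} can be avoided.
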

\begin{proof}
 It is a straightforward computation. We omit the details
\end{proof}

It is not hard to check that
\begin{pro}
Let $(A;\rho)$ be an \LP pair. Then $N$ is a Nijenhuis operator on the Lie conformal algebra $A$ and $S$ satisfies \eqref{eq:lie  conformalcom2} and \eqref{eq:lie conformal33} if and only if  $N+S$ is a Nijenhuis operator on
the semidirect product Lie conformal algebra $A\ltimes_{\rho}
V.$
\end{pro}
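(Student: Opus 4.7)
The plan is to expand the Nijenhuis identity for $\widehat{N}\defbe N+S$ on the semidirect product $A\ltimes_\rho V$, where $\widehat{N}:A\oplus V\to A\oplus V$ is defined by $\widehat{N}(a+u)\defbe N(a)+S(u)$, and then separate the resulting equality into its $A$- and $V$-valued components. Because $A$ and $V$ are $\Comp[\partial]$-submodules of $A\ltimes_\rho V$, the $\Comp[\partial]$-linearity condition $\widehat{N}\circ\partial=\partial\circ\widehat{N}$ splits at once into $N\circ\partial=\partial\circ N$ and \eqref{eq:lie conformalcom2}, so all that remains is to identify the Nijenhuis identity on the semidirect product with the Nijenhuis identity for $N$ on $A$ together with \eqref{eq:lie conformal33}.

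The main computation is to expand both sides of \eqref{eq:Nijenhuis condition} for $\widehat{N}$ applied to $a+u$ and $b+v$, using the semidirect bracket $[(a+u)_\lambda(b+v)]=[a_\lambda b]+\rho(a)_\lambda v-\rho(b)_{-\lambda-\partial}u$. The $A$-valued component is isolated by setting $u=v=0$: all cross terms vanish and the remaining identity is exactly the Nijenhuis condition for $N$ on $(A,[\cdot_\lambda\cdot])$. The $V$-valued component further decomposes into two pieces: the one involving only $\rho(\cdot)_\lambda$, isolated by taking $u=0$, which collapses to
\[
\rho(N(a))_\lambda S(v)=S\rho(N(a))_\lambda v+S\rho(a)_\lambda S(v)-S^2(\rho(a)_\lambda v),
\]
i.e.\ precisely \eqref{eq:lie conformal33}; and the symmetric piece involving only $\rho(\cdot)_{-\lambda-\partial}$, isolated by taking $v=0$, which is obtained from \eqref{eq:lie conformal33} by the substitution $(a,v)\leftrightarrow(b,u)$ and the shift $\lambda\mapsto-\lambda-\partial$, and therefore adds no new condition.

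For the converse, assuming that $N$ is a Nijenhuis operator on $A$ and that \eqref{eq:lie conformal33} holds, the same expansion shows the $A$-component is satisfied (it is the hypothesis on $N$) while the full $V$-component is the sum of two instances of \eqref{eq:lie conformal33}, one per slot of the $\lambda$-bracket, and therefore vanishes. Hence $\widehat{N}$ satisfies \eqref{eq:Nijenhuis condition} on $A\ltimes_\rho V$.

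The main obstacle is purely organizational: the expansion of \eqref{eq:Nijenhuis condition} for $\widehat{N}$ produces roughly a dozen terms, and one must carefully match each $\rho(\cdot)_\lambda v$ contribution against the corresponding $\rho(\cdot)_{-\lambda-\partial}u$ contribution, being careful about the $\lambda$-shifts and the action of $\widehat{N}$ on outputs that live in $V[\lambda]$ (where it acts as $S$, not as $N$). No new conceptual ingredient is required beyond recognizing that \eqref{eq:lie conformal33} is precisely the compatibility encoding how $N$, $S$, and $\rho$ must interact in order for $\widehat{N}$ to be Nijenhuis on the semidirect product.
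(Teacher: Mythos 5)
Your proof is correct and is exactly the direct verification the paper leaves to the reader (the paper states this proposition with no proof beyond ``it is not hard to check''): expand the Nijenhuis identity for $N+S$ on $A\ltimes_\rho V$, read off the $A$-component as the Nijenhuis condition for $N$, and observe that the $V$-component splits by bilinearity into \eqref{eq:lie conformal33} and its image under $(a,v)\mapsto(b,u)$, $\lambda\mapsto-\lambda-\partial$, which carries no new information. The separation of the $V$-component via $u=0$ and $v=0$ is legitimate since that component is a sum of a term linear in $v$ alone and a term linear in $u$ alone.
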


 \begin{defi}
Let $(A,[\cdot_\lambda\cdot];\rho)$ be a \LP pair. Let $N:A\rightarrow A$ and $S:V\rightarrow V$ be $\Comp[\partial]$-module homomorphisms. A pair $(N,S)$ is called a {\bf Nijenhuis structure} on the \LP pair $(A;\rho)$ if $N$ is a Nijenhuis operator on the Lie conformal algebra $A$ and for all $a\in A, v\in V$, the following conditions hold:
\begin{equation}\label{eq:lie conformal36}
  \rho(N(a))_{\lambda}S(v)=S(\rho(N(a))_{\lambda}(v))+\rho(a)_{\lambda}S^{2}(v)-S((\rho(a))_{\lambda}S(v)).
\end{equation}
 \end{defi}

Let $S:V\rightarrow V$ be an $\Comp[\partial]$-module homomorphism. Define
\begin{equation}
  S^*(\alpha)_\lambda v=\alpha_\lambda S(v),\quad\forall~\alpha\in {V^*}^c, v\in V.
\end{equation}
Since
\begin{eqnarray*}
S^*(\alpha)_\lambda (\partial v)=\alpha_\lambda S( \partial v)=\alpha_\lambda \partial S( v)=\lambda S^*(\alpha)_\lambda v,
\end{eqnarray*}
 we have $S^*(\alpha)\in {V^*}^c$.

\begin{pro}
Let $(A,[\cdot_\lambda\cdot];\rho)$ be an \LP pair. If $(N,S)$ is a Nijenhuis structure on the \LP pair $(A;\rho)$, then $N$ and $S^*$ generate a trivial infinitesimal deformation of the \LP pair $(A,[\cdot_\lambda\cdot];\rho^*)$.
\end{pro}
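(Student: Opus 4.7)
The plan is to apply Theorem \ref{thm:trivial deform} to the \LP pair $(A;\rho^*)$ with the pair $(N,S^*)$ in place of $(N,S)$. Since $(N,S)$ is a Nijenhuis structure on $(A;\rho)$, the operator $N$ is already a Nijenhuis operator on the Lie conformal algebra $A$. Hence it suffices to check two things: (i) that $S^*\circ\partial=\partial\circ S^*$ on $V^{*c}$, and (ii) the compatibility analogue of \eqref{eq:lie conformal33} for $\rho^*$ and $S^*$, namely
\[
\rho^*(N(a))_{\mu}S^*(\alpha)=S^*(\rho^*(N(a))_{\mu}\alpha)+S^*(\rho^*(a)_{\mu}S^*(\alpha))-(S^*)^{2}(\rho^*(a)_{\mu}\alpha),
\]
for all $a\in A$ and $\alpha\in V^{*c}$. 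Once (i) and (ii) are established, Theorem \ref{thm:trivial deform} produces the desired trivial infinitesimal deformation of $(A;\rho^*)$ with bracket perturbation generated by $N$ and module perturbation generated by $S^*$.

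For (i), I would compute both $(\partial S^*(\alpha))_\lambda v$ and $(S^*(\partial\alpha))_\lambda v$ directly from the definitions: each equals $-\lambda\alpha_\lambda S(v)$ using $S\circ\partial=\partial\circ S$, which holds because $S$ is a $\Comp[\partial]$-module homomorphism.

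For (ii), the strategy is to pair each side of the identity against an arbitrary $v\in V$ (at spectral parameter $\nu$) using the two structural formulas $(\rho^*(a)_\lambda\alpha)_\mu v=-\alpha_{\mu-\lambda}(\rho(a)_\lambda v)$ and $(S^*(\alpha))_\lambda v=\alpha_\lambda(S(v))$. A direct unfolding shows that the left-hand side produces $-\alpha_{\nu-\mu}\bigl(S(\rho(N(a))_\mu v)\bigr)$, while the right-hand side produces $-\alpha_{\nu-\mu}\bigl(\rho(N(a))_\mu S(v)+S(\rho(a)_\mu S(v))-\rho(a)_\mu S^2(v)\bigr)$. Thus (ii) reduces to the identity
\[
\rho(N(a))_\mu S(v)=S(\rho(N(a))_\mu v)+\rho(a)_\mu S^2(v)-S(\rho(a)_\mu S(v)),
\]
which is exactly the defining equation \eqref{eq:lie conformal36} of the Nijenhuis structure $(N,S)$.

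The only real obstacle is bookkeeping: one must consistently track the spectral parameters $\lambda$, $\mu$, $\mu-\lambda$, and the shift $-\lambda-\partial$ implicit in the conformal dual $V^{*c}$. No new structural ingredient is needed beyond \eqref{eq:lie conformal36}, so once the dualization is carefully carried out, (i) and (ii) follow mechanically and Theorem \ref{thm:trivial deform} delivers the conclusion.
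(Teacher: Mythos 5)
Your proposal is correct and follows essentially the same route as the paper: verify that $S^*$ commutes with $\partial$, then pair the required identity for $(\rho^*,S^*)$ against an arbitrary $v\in V$ so that it unfolds, via $(\rho^*(a)_\lambda\alpha)_\mu v=-\alpha_{\mu-\lambda}(\rho(a)_\lambda v)$ and $S^*(\alpha)_\lambda v=\alpha_\lambda S(v)$, into exactly the defining condition \eqref{eq:lie conformal36} of the Nijenhuis structure, and then invoke Theorem \ref{thm:trivial deform}. The sign bookkeeping you outline (in particular that dualization converts \eqref{eq:lie conformal36} into the form \eqref{eq:lie conformal33} required by that theorem) matches the paper's computation.
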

\begin{proof}
By a direct calculation, we have
\begin{eqnarray*}
S^*(\partial \alpha)_\lambda v=(\partial\alpha)_\lambda S(v)=-\lambda\alpha_\lambda S(v)=(\partial S^*( \alpha))_\lambda v,
\end{eqnarray*}
which implies that $S^*(\partial \alpha)=\partial S^*( \alpha)$.

Furthermore, by \eqref{eq:lie conformal36}, we have
\begin{eqnarray*}
&&(\rho^*(N(a))_{\lambda}(S^*(\alpha)))_\mu v-S^*(\rho^*(N(a))_{\lambda}(\alpha))_\mu v-S^*(\rho(a)_{\lambda}S^*(\alpha))_\mu v+{S^*}^{2}(\rho^*(a)_{\lambda}(\alpha))_\mu v\\
&=&-\alpha_{\mu-\lambda}(S\rho(N(a))_\lambda v)+ \alpha_{\mu-\lambda}(\rho(N(a))_{\lambda}S(v))+ \alpha_{\mu-\lambda}(S(\rho(a))_{\lambda}S(v))-\alpha_{\mu-\lambda}(\rho(a)_{\lambda}(S^{2}(v)))=0.
\end{eqnarray*}
By Theorem \ref{thm:trivial deform}, $N$ and $S^*$ generate a trivial infinitesimal deformation of the \LP pair $(A,[\cdot_\lambda\cdot];\rho^*)$
\end{proof}

\begin{cor}\label{cor:Hirac-Nijenhuis copair}
   Let $(N,S)$ be a Nijenhuis structure on a \LP pair $(A,[\cdot_\lambda\cdot];\rho)$. Then for any $i\in\Nat$, $(N^i,S^i)$ is a Nijenhuis structure on the \LP pair $(A;\rho)$.
\end{cor}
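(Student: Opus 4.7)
The plan is to reduce the statement to a general fact about Nijenhuis operators on a Lie conformal algebra via the preceding proposition. That proposition identifies Nijenhuis structures on the \LP pair $(A;\rho)$ with Nijenhuis operators on the semidirect product Lie conformal algebra $A\ltimes_{\rho}V$: namely, $(N,S)$ is a Nijenhuis structure on $(A;\rho)$ if and only if the $\Comp[\partial]$-module endomorphism $M:=N+S$ of $A\oplus V$, defined by $M(a+v)=N(a)+S(v)$, is a Nijenhuis operator on $A\ltimes_{\rho}V$. Since $M^{i}(a+v)=N^{i}(a)+S^{i}(v)$, the pair $(N^{i},S^{i})$ is a Nijenhuis structure on $(A;\rho)$ if and only if $M^{i}$ is a Nijenhuis operator on $A\ltimes_{\rho}V$. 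This reduces the corollary to the auxiliary claim that powers of any Nijenhuis operator $T$ on a Lie conformal algebra $(\mathcal L,[\cdot_{\lambda}\cdot])$ are again Nijenhuis operators on $\mathcal L$.

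To prove the auxiliary claim, I would establish by induction on $j+k$ the stronger identity
\begin{equation*}
[T^{j}(a)_{\lambda}T^{k}(b)]=T^{j}[a_{\lambda}T^{k}(b)]+T^{k}[T^{j}(a)_{\lambda}b]-T^{j+k}[a_{\lambda}b],\quad\forall~j,k\geq 1,~a,b\in \mathcal L.
\end{equation*}
The base case $j=k=1$ is exactly the Nijenhuis condition for $T$. For the inductive step I rewrite $[T^{j}(a)_\lambda T^{k}(b)]=[T(T^{j-1}a)_\lambda T(T^{k-1}b)]$, apply the Nijenhuis relation for $T$ to break this into three terms, and then invoke the inductive hypothesis on each of $[T^{j-1}(a)_\lambda T^{k}(b)]$, $[T^{j}(a)_\lambda T^{k-1}(b)]$ and $[T^{j-1}(a)_\lambda T^{k-1}(b)]$. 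Collecting like terms gives the desired formula. Specializing to $j=k=i$ yields $[T^{i}(a)_{\lambda}T^{i}(b)]=T^{i}([T^{i}(a)_{\lambda}b]+[a_{\lambda}T^{i}(b)])-T^{2i}[a_{\lambda}b]$, which is precisely the Nijenhuis identity for $T^{i}$.

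The main technical obstacle is the bookkeeping in this inductive step: six summands appear after substitution, and one must verify that they cancel in pairs so as to leave exactly the three terms on the right-hand side of the displayed identity. The calculation is entirely parallel to the classical Lie algebra case and requires only bilinearity of the $\lambda$-bracket together with repeated substitution of the Nijenhuis relation for $T$. Once the auxiliary claim is in hand, applying it to $M=N+S$ on $A\ltimes_{\rho}V$ shows that $M^{i}$ is Nijenhuis for every $i\in\Nat$; unwinding the identification through the preceding proposition then yields that $(N^{i},S^{i})$ is a Nijenhuis structure on $(A;\rho)$, proving the corollary.
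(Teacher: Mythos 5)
Your auxiliary claim---that every power of a Nijenhuis operator $T$ on a Lie conformal algebra is again a Nijenhuis operator, via the identity $[T^{j}(a)_{\lambda}T^{k}(b)]=T^{j}[a_{\lambda}T^{k}(b)]+T^{k}[T^{j}(a)_{\lambda}b]-T^{j+k}[a_{\lambda}b]$ and induction on $j+k$---is correct (adopt the convention $T^{0}=\Id$, under which the identity is trivial when $j=0$ or $k=0$, so that the inductive step is licit when $j=1$ or $k=1$). The gap is in the reduction. The proposition you invoke does \emph{not} identify Nijenhuis structures on $(A;\rho)$ with Nijenhuis operators on $A\ltimes_{\rho}V$: it says that $N+S$ is Nijenhuis on $A\ltimes_{\rho}V$ if and only if $N$ is Nijenhuis and $S$ satisfies \eqref{eq:lie conformal33}, whereas a Nijenhuis structure on the pair is defined by \eqref{eq:lie conformal36}. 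These are genuinely different conditions: writing $L_{a}$ for the operator $\rho(a)_{\lambda}$ on $V$ and $[P,Q]=P\circ Q-Q\circ P$, condition \eqref{eq:lie conformal33} reads $[L_{N(a)},S]=S\circ[L_{a},S]$, while \eqref{eq:lie conformal36} reads $[L_{N(a)},S]=[L_{a},S]\circ S$. (Example \ref{ex:cNP} illustrates the difference: $(N,N^{*})$ on $(A;\ad^{*})$ satisfies \eqref{eq:lie conformal36} precisely because $N$ is Nijenhuis, but does not in general satisfy \eqref{eq:lie conformal33}.) As written, your argument therefore shows that powers preserve condition \eqref{eq:lie conformal33}, which is not the statement of the corollary.

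The strategy is salvageable in two ways. Either replace the semidirect product by the one the paper actually attaches to a Nijenhuis structure: $(N,S)$ is a Nijenhuis structure on $(A;\rho)$ exactly when $N+S^{*}$ is a Nijenhuis operator on $A\ltimes_{\rho^{*}}V^{*c}$; since $(S^{*})^{i}=(S^{i})^{*}$, your auxiliary claim gives that $N^{i}+(S^{i})^{*}$ is Nijenhuis there, and pairing against elements of the finite module $V$ recovers \eqref{eq:lie conformal36} for $(N^{i},S^{i})$. Or argue directly, which is what the paper's ``direct calculation'' amounts to: from $[L_{N(a)},S]=[L_{a},S]\circ S$ one gets $[L_{N^{j}(a)},S]=[L_{a},S]\circ S^{j}$ by induction on $j$, and the Leibniz expansion $[P,S^{k}]=\sum_{m=0}^{k-1}S^{m}\circ[P,S]\circ S^{k-1-m}$ then yields $[L_{N^{j}(a)},S^{k}]=[L_{a},S^{k}]\circ S^{j}$, which at $j=k=i$ is exactly \eqref{eq:lie conformal36} for $(N^{i},S^{i})$.
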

\begin{proof}
  It follows by a direct calculation.
\end{proof}

  \begin{ex}\label{ex:cNP}
 Let $N$ be a Nijenhuis operator on a Lie conformal algebra $(A,[\cdot_\lambda\cdot])$. Then $(N,N^*)$ is a Nijenhuis structure on the \LP pair $(A,[\cdot_\lambda\cdot];\ad^*)$.
 \end{ex}

Given a Nijenhuis structure $(N,S)$ on an \LP pair $(A,[\cdot_\lambda\cdot];\rho)$, we consider the associated deformed Lie conformal algebra $(A,[\cdot_\lambda\cdot]_N)$, and  construct a module over  $(A,[\cdot_\lambda\cdot]_N)$ which will be used in the following sections.

Let $N$ and $S$ be $\Comp[\partial]$-module homomorphisms. Define $\tilde{\rho}:A\longrightarrow{\rm gc}(V)$ by
 \begin{eqnarray}
    \tilde{\rho}(a)_{\lambda}(v)=\rho(Na)_{\lambda}(v)-\rho(a)_{\lambda}S(v)+S(\rho(a)_{\lambda}(v)),\quad\forall~a\in A,v\in V.
 \end{eqnarray}

\begin{pro}
If $(N,S)$ is a Nijenhuis structure on an \LP pair $(A;\rho)$, then $(V;\tilde{\rho})$ is a module over the Lie conformal algebra $(A,[\cdot_\lambda\cdot]_N)$.
\end{pro}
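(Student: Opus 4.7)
The plan is to verify the three axioms of a module for $(V;\tilde{\rho})$ over the deformed Lie conformal algebra $(A,[\cdot_\lambda\cdot]_N)$. The first two axioms, namely $\tilde{\rho}(\partial a)_\lambda v = -\lambda\tilde{\rho}(a)_\lambda v$ and $\tilde{\rho}(a)_\lambda(\partial v) = (\partial+\lambda)\tilde{\rho}(a)_\lambda v$, should follow immediately from the corresponding axioms for $\rho$ together with the fact that $N$ and $S$ are $\Comp[\partial]$-module homomorphisms (i.e., $N\partial = \partial N$ and $S\partial = \partial S$); this is essentially bookkeeping.

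The substance of the proof lies in establishing the bracket-compatibility axiom
\begin{equation*}
\tilde{\rho}(\{a_\lambda b\}_N)_{\lambda+\mu}v = \tilde{\rho}(a)_\lambda\bigl(\tilde{\rho}(b)_\mu v\bigr) - \tilde{\rho}(b)_\mu\bigl(\tilde{\rho}(a)_\lambda v\bigr).
\end{equation*}
First I would unfold the left-hand side by the definition of $\tilde{\rho}$ and by the formula $\{a_\lambda b\}_N = [N(a)_\lambda b] + [a_\lambda N(b)] - N[a_\lambda b]$; the Nijenhuis property of $N$ gives the useful simplification $N\{a_\lambda b\}_N = [N(a)_\lambda N(b)]$, so the $\rho(N(\cdot))_{\lambda+\mu}v$ term collapses to $\rho([N(a)_\lambda N(b)])_{\lambda+\mu}v$. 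Then I would apply the module axiom for $\rho$ to each of the four resulting bracket expressions to turn them into commutators of $\rho$-actions.

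Next I would expand the right-hand side. Each composition $\tilde{\rho}(a)_\lambda \tilde{\rho}(b)_\mu v$ unfolds into nine terms (three times three), and the antisymmetric combination produces eighteen terms. The strategy is to group these terms in three families according to how many $S$'s appear on $v$ (zero, one, or two), and to match them against the corresponding families on the left-hand side. The terms involving $\rho(N(a))_\lambda \rho(N(b))_\mu v$ and their antisymmetrization match the collapsed Nijenhuis term directly, while the mixed $\rho(N(\cdot))\rho(\cdot)S$ and $S\rho(\cdot)\rho(\cdot)$ terms must be combined using the module axiom for $\rho$.

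The main obstacle, I expect, will be handling the terms of the form $\rho(N(a))_\lambda S(v)$ that arise on the right-hand side: these cannot be simplified by module axioms alone, and this is exactly where the defining condition \eqref{eq:lie conformal36}, namely $\rho(N(a))_\lambda S(v) = S(\rho(N(a))_\lambda v) + \rho(a)_\lambda S^2(v) - S(\rho(a)_\lambda S(v))$, must be invoked to rewrite such terms in a form that enables cancellation. Once those substitutions are made in both the $\tilde{\rho}(a)_\lambda\tilde{\rho}(b)_\mu v$ term and its swapped counterpart, I expect everything to collapse onto the left-hand side, completing the verification.
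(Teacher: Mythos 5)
Your proposal is correct, but it follows a genuinely different route from the paper. The paper argues indirectly: it passes to the conformal dual, uses the fact (established just before) that the Nijenhuis-structure conditions amount to $N+S^*$ being a Nijenhuis operator on the semidirect product $A\ltimes_{\rho^*}V^{*c}$, invokes Corollary \ref{cor:Nijenhuis} to see that the deformed bracket $[\cdot_\lambda\cdot]_{N+S^*}$ is a Lie conformal bracket, computes that this deformed bracket is exactly the semidirect product of $(A,[\cdot_\lambda\cdot]_N)$ with $(V^{*c};\tilde{\rho}^*)$, and then dualizes back using finiteness of $V$. Your direct verification avoids that machinery (in particular the identification of $V$ with its double conformal dual) at the cost of an eighteen-term expansion, and your outline of the cancellation is accurate: after collapsing $\rho(N\{a_\lambda b\}_N)$ via the Nijenhuis identity and converting brackets to commutators via the module axiom for $\rho$, the only obstruction is the terms in which $S$ sits between two actions of $\rho$, and these are removed by three applications of \eqref{eq:lie conformal36} --- once to $\rho(N(a))_\lambda\bigl(S(\rho(b)_\mu v)\bigr)$, once in the rearranged form $S(\rho(N(b))_\mu u)=\rho(N(b))_\mu S(u)-\rho(b)_\mu S^2(u)+S(\rho(b)_\mu S(u))$ to eliminate $\rho(a)_\lambda\bigl(S(\rho(N(b))_\mu v)\bigr)$, and once with $a$ replaced by $[a_\lambda b]$ to absorb the residual discrepancy $\rho(N[a_\lambda b])_{\lambda+\mu}S(v)-S(\rho(N[a_\lambda b])_{\lambda+\mu}v)-\rho([a_\lambda b])_{\lambda+\mu}S^2(v)+S(\rho([a_\lambda b])_{\lambda+\mu}S(v))$ between the two sides. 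So the collapse you anticipate does occur. The paper's proof is shorter because it reuses its structural results; yours is self-contained, works without double-dualization, and makes visible exactly where each hypothesis enters.
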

\begin{proof}
Note that for $a\in A,v\in V,\alpha\in V^{*c}$, we have
$$ (\tilde{\rho}^*(a)_{\lambda}(\alpha))_\mu v=(\rho^*(Na)_\lambda \alpha)_\mu v-(S^*\rho^*(a)_\lambda \alpha)_\mu v+(\rho^*(a)_\lambda S^*(\alpha))_\mu v,$$
which implies that
$$\tilde{\rho}^*(a)_{\lambda}(\alpha)=\rho^*(Na)_\lambda \alpha-S^*\rho^*(a)_\lambda \alpha+\rho^*(a)_\lambda S^*(\alpha).$$
Since $N+S^*$ is a Nijenhuis operator on the semidirect product Lie conformal algebra  $A\ltimes_{\rho^*}V^{*c}$, for $a,b\in A$ and $\alpha,\beta\in V^{*c}$, the deformed bracket $[\cdot_{\lambda}\cdot]_{N+S^*} $is given by
\begin{eqnarray*}
&&[(a+\alpha)_{\lambda}(b+\beta)]_{N+S^*}\\
&=&[(N+S^*)(a+\alpha)_{\lambda}(b+\beta)]_{\rho}+[(a+\alpha)_{\lambda}(N+S^*)(b+\beta)]_{\rho}-(N+S^*)[(a+\alpha)_{\lambda}(b+\beta)]_{\rho}\\
      &=&[N(a)_{\lambda}b]+[a_{\lambda}N(b)]-N[a_{\lambda}b]+\rho^*(Na)_{\lambda}(\beta)
      -\rho^*(b)_{-\lambda-\partial}S^*(\alpha)+\rho^*(a)_{\lambda}S^*(\beta)\\
      &&-\rho^*(Nb)_{-\lambda-\partial}(\alpha)-S^*\rho^*(a)_{\lambda}(\beta)+S^*\rho^*(b)_{-\lambda-\partial}(\alpha)\\
      &=&[a_{\lambda}b]_{N}+\tilde{\rho}^*(a)_{\lambda}(\beta)-\tilde{\rho}^*(b)_{-\lambda-\partial}(\alpha),
\end{eqnarray*}
which implies that $(V^{*c};\tilde{\rho}^*)$ is a module over the Lie conformal algebra $(A,[\cdot_\lambda\cdot]_N)$. Since the module $V$  is finite, $(V;\tilde{\rho})$ is a module over the Lie conformal algebra $(A,[\cdot_\lambda\cdot]_N)$.
\end{proof}

\section{$\GRBN$-structures and compatible $\huaO$-operators on  \LP pairs }\label{sec:ON-structures}
In this section, we give the notion of an $\GRBN$-structure on an \LP pair, which is a conformal analogue of the Possion-Nijenhuis structure.

Recall that a {\bf left-symmetric conformal algebra} $A$ is a $\Comp[\partial]$-module endowed with a $\Comp$-bilinear map $A\times A\rightarrow A[\lambda]$, denoted by $a\times b\rightarrow a\ast_{\lambda}b$, satisfying
\begin{eqnarray}
\label{eq:LSY conformal1} \partial a\ast_{\lambda}b&=&-\lambda a\ast_{\lambda}b,\\
\label{eq:LSY conformal2}  {a\ast_{\lambda} \partial b}&=&(\partial+\lambda)a\ast_{\lambda}b,\\
 \label{eq:LSY conformal3} (a\ast_{\lambda}b)\ast_{\lambda+\mu}c-a\ast_{\lambda}(b\ast_{\mu}c)&=&(b\ast_{\mu}a)\ast_{\lambda+\mu}c-b\ast_{\mu}(a\ast_{\lambda}c),\quad\forall~ a,b,c\in A.
\end{eqnarray}

Let $(A,\ast_\lambda)$ be a left-symmetric conformal algebra.  Then the $\lambda$-bracket
\begin{equation}
  [a_{\lambda}b]=a\ast_{\lambda}b-b\ast_{-\lambda-\partial}a \quad \forall a,b\in A
\end{equation}
defines a Lie conformal algebra, which is called {\bf the sub-adjacent Lie conformal algebra of} $A$. See \cite{Bakalov,HL} for more details.

\begin{lem}{\rm(\cite{HB})}
Let  $T:V\longrightarrow A$ be an $\GRB$-operator on an \LP pair  $(A,[\cdot_\lambda\cdot];\rho)$. Define a multiplication $\ast^T_\lambda$ on $V$ by
\begin{equation}
  u\ast_\lambda^T v=\rho(Tu)_\lambda(v),\quad \forall u,v\in V.
\end{equation}
Then $(V,\ast^T_\lambda)$ is a left-symmetric conformal algebra.
\end{lem}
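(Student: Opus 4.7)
The plan is to verify the three defining axioms \eqref{eq:LSY conformal1}--\eqref{eq:LSY conformal3} of a left-symmetric conformal algebra directly from the definition $u \ast^T_\lambda v = \rho(Tu)_\lambda(v)$.

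First I would dispose of the two sesquilinearity axioms, which are essentially automatic. Since $T$ is a $\Comp[\partial]$-module homomorphism, $T(\partial u) = \partial T(u)$, so
\begin{equation*}
(\partial u) \ast^T_\lambda v = \rho(\partial T(u))_\lambda(v) = -\lambda\, \rho(Tu)_\lambda(v) = -\lambda (u \ast^T_\lambda v),
\end{equation*}
giving \eqref{eq:LSY conformal1}; axiom \eqref{eq:LSY conformal2} follows analogously from the module identity $\rho(a)_\lambda(\partial v) = (\partial + \lambda)\rho(a)_\lambda(v)$.

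The core of the argument is the left-symmetry relation \eqref{eq:LSY conformal3}. Using the module axiom for $\rho$, I would first compute
\begin{equation*}
u \ast^T_\lambda (v \ast^T_\mu w) - v \ast^T_\mu (u \ast^T_\lambda w) = \rho(Tu)_\lambda\rho(Tv)_\mu(w) - \rho(Tv)_\mu\rho(Tu)_\lambda(w) = \rho([T(u)_\lambda T(v)])_{\lambda+\mu}(w),
\end{equation*}
and then apply the $\GRB$-operator identity to rewrite $[T(u)_\lambda T(v)] = T(\rho(Tu)_\lambda v - \rho(Tv)_{-\lambda-\partial}u)$. The first resulting summand delivers $(u \ast^T_\lambda v) \ast^T_{\lambda+\mu} w$ on the nose; what remains is to identify $\rho(T(\rho(Tv)_{-\lambda-\partial}u))_{\lambda+\mu}(w)$ with $(v \ast^T_\mu u) \ast^T_{\lambda+\mu} w = \rho(T(\rho(Tv)_\mu u))_{\lambda+\mu}(w)$. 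Once this is done, the associator is symmetric in $(u,\lambda) \leftrightarrow (v,\mu)$, and \eqref{eq:LSY conformal3} follows.

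The main obstacle is precisely this last substitution step: the formal variable $-\lambda-\partial$ sitting inside a nested conformal expression must effectively become $\mu$ once the outer $\rho(-)_{\lambda+\mu}(w)$ is applied. I would handle it by writing $\rho(Tv)_\mu(u) = \sum_k \mu^k u_k$ with $u_k \in V$, so that $\rho(Tv)_{-\lambda-\partial}(u) = \sum_k (-\lambda - \partial)^k u_k$, and then using $\Comp[\partial]$-linearity of $T$ together with the iterated identity $\rho((-\lambda - \partial)^k a)_{\lambda+\mu}(w) = \mu^k \rho(a)_{\lambda+\mu}(w)$, which follows inductively from $\rho(\partial a)_\nu(w) = -\nu\, \rho(a)_\nu(w)$. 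Collecting the factors $\mu^k$ reassembles exactly $\rho(T(\rho(Tv)_\mu u))_{\lambda+\mu}(w)$, completing the proof.
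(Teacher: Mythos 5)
Your proof is correct. The paper itself gives no proof of this lemma (it is quoted from \cite{HB}), but your argument is the standard direct verification: the sesquilinearity axioms follow immediately from $T\circ\partial=\partial\circ T$ and the module axioms, and the left-symmetry identity reduces, via $\rho([T(u)_\lambda T(v)])_{\lambda+\mu}(w)=u\ast^T_\lambda(v\ast^T_\mu w)-v\ast^T_\mu(u\ast^T_\lambda w)$ and the $\GRB$-operator identity, to showing that the associator is symmetric in $(u,\lambda)\leftrightarrow(v,\mu)$. You correctly identified and resolved the one genuinely delicate point, namely that $\rho\bigl(T(\rho(Tv)_{-\lambda-\partial}u)\bigr)_{\lambda+\mu}(w)=\rho\bigl(T(\rho(Tv)_{\mu}u)\bigr)_{\lambda+\mu}(w)$, which follows exactly as you say from expanding $\rho(Tv)_\mu(u)=\sum_k\mu^k u_k$, commuting $T$ past $\partial$, and iterating $\rho((-\lambda-\partial)a)_{\lambda+\mu}(w)=\mu\,\rho(a)_{\lambda+\mu}(w)$.
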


We denote by $(V,[\cdot_{\lambda}\cdot]^{T})$ the sub-adjacent Lie conformal algebra of the left-symmetric conformal algebra $(V,\ast_{\lambda}^{T})$, where $[\cdot_\lambda \cdot]^T$ is given by
\begin{equation}\label{eq:lie conformal 45}
  [u_{\lambda}v]^{T}=\rho(T(u))_{\lambda}(v)-\rho(T(v))_{-\lambda-\partial}(u),\quad \forall~u,v\in V.
\end{equation}
Furthermore, $T$ is a Lie conformal algebra homomorphism from $(V,[\cdot_{\lambda}\cdot]^{T})$ to $(A,[\cdot_{\lambda}\cdot])$.

Let $T:V \rightarrow A$ be an  $\huaO$-operator and $(N,S)$ a Nijenhuis structure on an \LP pair $(A,[\cdot_{\lambda}\cdot];\rho)$. We define the bracket $[\cdot_{\lambda}\cdot]_{S}^{T}: \otimes^{2}V\rightarrow V[\lambda]$ to be the deformed bracket of $[\cdot_{\lambda}\cdot]^{T}$ by $S$, i.e.
\begin{equation}\label{eq:deformed S-bracket}
[u_{\lambda}v]_{S}^{T}=[S(u)_{\lambda}v]^{T}+[u_{\lambda}S(v)]^{T}-S([u_{\lambda}v]^{T}), \quad \forall~ u,v\in V.
\end{equation}
Define the bracket $\{\cdot_{\lambda}\cdot\}_{\tilde{\rho}}^{T}:\otimes^2 V\rightarrow V[\lambda]$ similar to \eqref {eq:lie conformal 45} using the module  $(V;\tilde{\rho})$,
\begin{equation}
\{u_{\lambda}v\}_{\tilde{\rho}}^{T}=\tilde{\rho}(T(u))_{\lambda}(v)-\tilde{\rho}(T(v))_{-\lambda-\partial}(u), \quad \forall~ u,v\in V.
\end{equation}
\begin{defi}
Let $T:V \rightarrow A$ be an  $\huaO$-operator and $(N,S)$ a Nijenhuis structure on an \LP pair $(A,[\cdot_{\lambda}\cdot];\rho)$. The triple $(T,N, S)$ is called an {\bf $\GRBN$-structure} on  the \LP pair $(A,[\cdot_{\lambda}\cdot];\rho)$ if $T$ and $(N,S)$ satisfy the following conditions
  \begin{eqnarray}
\label{eq:TS commute}     {N\circ T}&=& {T\circ S},\\
\label{eq:lie49}    {[u_{\lambda}v]^{N\circ T}}&=&{[u_{\lambda}v]_{S}^{T}},\quad \forall~ u,v\in V.
   \end{eqnarray}
\end{defi}

\begin{lem}\label{lem:3brackets}
 Let $(T,S,N)$ be an $\GRBN$-structure on  an \LP pair $(A,[\cdot_{\lambda}\cdot];\rho)$. Then we have
\begin{equation}
 {[u_{\lambda}v]^{N\circ T}}=[u_{\lambda}v]_{S}^{T}=\{u_{\lambda}v\}_{\tilde{\rho}}^{T},\quad \forall~ u,v\in V.
\end{equation}
\end{lem}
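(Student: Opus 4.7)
The plan is to observe that the first equality $[u_{\lambda}v]^{N\circ T}=[u_{\lambda}v]_{S}^{T}$ is exactly the defining axiom \eqref{eq:lie49} of an $\GRBN$-structure and requires no further argument. The substance of the lemma is therefore the second equality, which I would establish by direct expansion of both sides, using the compatibility $N\circ T=T\circ S$ from \eqref{eq:TS commute} to rewrite every occurrence of $T\circ S$ as $N\circ T$.

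Specifically, unfolding $[u_{\lambda}v]_{S}^{T}$ via \eqref{eq:deformed S-bracket} and \eqref{eq:lie conformal 45} produces the two ``principal'' terms $\rho(NT(u))_{\lambda}v$ and $-\rho(NT(v))_{-\lambda-\partial}u$, together with four ``correction'' terms
$$
\rho(Tu)_{\lambda}S(v),\qquad -\rho(Tv)_{-\lambda-\partial}S(u),\qquad -S(\rho(Tu)_{\lambda}v),\qquad S(\rho(Tv)_{-\lambda-\partial}u).
$$
Expanding $\{u_{\lambda}v\}_{\tilde{\rho}}^{T}=\tilde{\rho}(Tu)_{\lambda}v-\tilde{\rho}(Tv)_{-\lambda-\partial}u$ via the formula $\tilde{\rho}(a)_{\lambda}w=\rho(Na)_{\lambda}w-\rho(a)_{\lambda}S(w)+S(\rho(a)_{\lambda}w)$ (and again using $NT=TS$) produces the very same two principal terms together with exactly the negatives of those four corrections.

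Adding the two expansions therefore telescopes to
\[
[u_{\lambda}v]_{S}^{T}+\{u_{\lambda}v\}_{\tilde{\rho}}^{T}\;=\;2\rho(NT(u))_{\lambda}v-2\rho(NT(v))_{-\lambda-\partial}u\;=\;2\,[u_{\lambda}v]^{N\circ T},
\]
after which invoking \eqref{eq:lie49} once more immediately yields $\{u_{\lambda}v\}_{\tilde{\rho}}^{T}=[u_{\lambda}v]_{S}^{T}$, as desired. The only real obstacle is disciplined bookkeeping of the two spectral parameters $\lambda$ and $-\lambda-\partial$ and of the signs coming from the skew-symmetric convention of $[\cdot_{\lambda}\cdot]^{T}$; it is worth noting that the argument uses neither the Nijenhuis compatibility \eqref{eq:lie conformal36} nor the $\huaO$-operator identity for $T$, so Lemma~\ref{lem:3brackets} is a purely formal consequence of the two axioms \eqref{eq:TS commute} and \eqref{eq:lie49} defining an $\GRBN$-structure.
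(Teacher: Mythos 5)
Your proposal is correct and follows exactly the paper's argument: the authors likewise observe that \eqref{eq:TS commute} gives $[u_{\lambda}v]_{S}^{T}+\{u_{\lambda}v\}_{\tilde{\rho}}^{T}=2[u_{\lambda}v]^{N\circ T}$ and then conclude via \eqref{eq:lie49}; you have merely written out the term-by-term cancellation that the paper leaves implicit. Your closing observation that only the two $\GRBN$-axioms are used (and not the Nijenhuis condition \eqref{eq:lie conformal36} or the $\huaO$-operator identity) is accurate.
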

\begin{proof}
By \eqref{eq:TS commute}, we have $[u_{\lambda}v]_{S}^{T}+\{u_{\lambda}v\}_{\tilde{\rho}}^{T}=2[u_{\lambda}v]^{N\circ T}$. Furthermore, by \eqref{eq:lie49}, we have
$[u_{\lambda}v]_{S}^{T}=\{u_{\lambda}v\}_{\tilde{\rho}}^{T}$. The conclusion follows.
\end{proof}

\begin{pro}
Let $(T,S,N)$ be an $\GRBN$-structure on  an \LP pair $(A,[\cdot_{\lambda}\cdot];\rho)$.  Then $S$ is a Nijenhuis operator on the sub-adjacent Lie conformal algebra $(V,[\cdot_{\lambda}\cdot]^{T})$. Thus, the brackets  $[\cdot_{\lambda}\cdot]^T_S$, $[\cdot_{\lambda}\cdot]_{\tilde{\varrho}}^T$ and $[\cdot_{\lambda}\cdot]^{N\circ T }$ are all Lie conformal brackets.
\end{pro}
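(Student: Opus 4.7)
The plan is as follows. The conclusion has two parts: (i) prove that $S$ is a Nijenhuis operator on the sub-adjacent Lie conformal algebra $(V,[\cdot_\lambda\cdot]^T)$, i.e.\ verify
\[
[S(u)_\lambda S(v)]^T \;=\; S\bigl([S(u)_\lambda v]^T+[u_\lambda S(v)]^T-S[u_\lambda v]^T\bigr),
\]
and (ii) deduce that the three brackets are Lie conformal. Part (ii) is essentially automatic once (i) is established: by Corollary \ref{cor:Nijenhuis} applied to the Lie conformal algebra $(V,[\cdot_\lambda\cdot]^T)$ with Nijenhuis operator $S$, the deformed bracket $[\cdot_\lambda\cdot]^T_S$ is a Lie conformal bracket; and Lemma \ref{lem:3brackets} then identifies $[\cdot_\lambda\cdot]^T_S$ with $[\cdot_\lambda\cdot]^{N\circ T}$ and with $\{\cdot_\lambda\cdot\}^T_{\tilde\rho}$, so all three agree and are simultaneously Lie conformal.

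For (i), I would first extract a useful consequence of the $\GRBN$-conditions. Unpacking the equality $[u_\lambda v]^{N\circ T}=[u_\lambda v]^T_S$ using $T\circ S=N\circ T$ yields, after cancellation, the intertwining relation
\begin{equation}\label{eq:interwine-plan}
\rho(T(u))_\lambda S(v)-S\rho(T(u))_\lambda v \;=\; \rho(T(v))_{-\lambda-\partial} S(u)-S\rho(T(v))_{-\lambda-\partial} u, \qquad u,v\in V.
\tag{$\star$}
\end{equation}
Next, applying the defining identity \eqref{eq:lie conformal36} of a Nijenhuis structure with $a=T(u)$ (and with $a=T(v)$, $\lambda\mapsto-\lambda-\partial$), and combining with \eqref{eq:interwine-plan} evaluated at $(S(u),v)$ and at $(u,S(v))$, I expect to derive the symmetric identity
\[
\rho(T(u))_\lambda S^2(v)-S\rho(T(u))_\lambda S(v) \;=\; \rho(T(v))_{-\lambda-\partial} S^2(u)-S\rho(T(v))_{-\lambda-\partial} S(u).
\]
This is the key technical relation; both $\eqref{eq:lie conformal36}$ and $\eqref{eq:interwine-plan}$ are needed to establish it.

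Finally I would compute $[S(u)_\lambda S(v)]^T$ directly from \eqref{eq:lie conformal 45} using $TS=NT$, then apply \eqref{eq:lie conformal36} twice (once with $a=T(u)$ and once with $a=T(v)$) to rewrite $\rho(NT(u))_\lambda S(v)$ and $\rho(NT(v))_{-\lambda-\partial} S(u)$. Expanding $S[u_\lambda v]^T_S$ by definition and using $TS=NT$ again, the difference of the two sides reduces, via the identity just derived and a final application of \eqref{eq:interwine-plan}, to zero. The main obstacle is bookkeeping: the computation involves six terms from each of $[S(u)_\lambda S(v)]^T$ and $S[u_\lambda v]^T_S$, and one must carefully track the $\lambda$ versus $-\lambda-\partial$ arguments together with the positions of $S$ and $N$, so the cleanest presentation will first record \eqref{eq:interwine-plan} and the symmetric consequence above as lemmas, and then perform the final verification in a single aligned display.
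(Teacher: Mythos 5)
Your proposal is correct and follows essentially the same route as the paper: the paper likewise extracts from $[u_\lambda v]^{T\circ S}=[u_\lambda v]^T_S$ the intertwining relation (recorded there in the equivalent forms of its evaluation at $(S(u),v)$ and of $S$ applied to it), combines it with the Nijenhuis-structure identity \eqref{eq:lie conformal36} at $a=T(v)$ rewritten via $N\circ T=T\circ S$, and then verifies $[S(u)_\lambda S(v)]^T=S([u_\lambda v]^T_S)$ by direct cancellation, concluding the second part exactly as you do from Corollary \ref{cor:Nijenhuis} and Lemma \ref{lem:3brackets}. The only difference is organizational (your explicit intermediate ``symmetric identity'' versus the paper's three numbered relations), not mathematical.
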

\begin{proof}
By the relation $[u_{\lambda}v]^{T\circ S}=[u_{\lambda}v]_{S}^{T}$, we have
\begin{eqnarray}
\label{eq:S-relation1}  \rho(T S(u))_{\lambda}S(v)-\rho(Tv)_{-\lambda-\partial}(S^{2}(u)) &=&S(\rho(T S(u))_{\lambda}(v))-S\rho(T(v))_{-\lambda-\partial}(S(u)),\\
\label{eq:S-relation2} S^{2}\rho(T(u))_{\lambda}(v)-S^{2}\rho(T(v))_{-\lambda-\partial}(u)&=& S\rho(Tu)_{\lambda}(S(v))-S\rho(T(v))_{-\lambda-\partial}S(u).
\end{eqnarray}
By \eqref{eq:lie conformal36}, we have
\begin{equation*}
  \rho(N T(v))_{\lambda}S(u)=S\rho(N T(v))_{\lambda}(u)+\rho(Tv)_{\lambda}S^{2}(u)-S\rho(Tv)_{\lambda}(S(u)),
\end{equation*}
which implies that
\begin{equation}
\label{eq:S-relation3}  \rho(T(v))_{\lambda}(S^{2}(u))-\rho(T S(v))_{\lambda}(S(u))=S\rho(T(v))_{\lambda}(S(u))-S\rho(T\circ S(v))_{\lambda}(u).
\end{equation}
By \eqref{eq:S-relation1}-\eqref{eq:S-relation3}, we have
\begin{equation*}
  \begin{split}
    &[S(u)_{\lambda}S(v)]^{T}-S([u_{\lambda}v]_{S}^{T})\\
   =&\rho(T S(u))_{\lambda}(S(v))-\rho(T S(v))_{-\lambda-\partial}(S(u))-S\rho(T S(u))_{\lambda}(v)+S\rho(T(v))_{-\lambda-\partial}(S(u))\\
    &-S\rho(T(u))_{\lambda}S(v)+S\rho(T S(v))_{-\lambda-\partial}(u)+S^{2}\rho(T(u))_{\lambda}(v)-S^{2}\rho(T(v))_{-\lambda-\partial}(u)\\
   =&\rho(T S(u))_{\lambda}(S(v))-\rho(T S(v))_{-\lambda-\partial}(S(u))-\rho(T S(u))_{\lambda}(S(v))+\rho(T(v))_{-\lambda-\partial}(S^{2}(u))\\
    &-S\rho(T(u))_{\lambda}(S(v))+S \rho(T S(v))_{-\lambda-\partial}(u)+S\rho(T(u))_{\lambda}(S(v))-S\rho(T(v))_{-\lambda-\partial}S(u)\\
   =&\rho(T(v))_{-\lambda-\partial}(S^{2}(u))-\rho(T S(v))_{-\lambda-\partial}(S(u))-S\rho(T(v))_{-\lambda-\partial}(S(u))+S\rho(T S(v))_{-\lambda-\partial}(u)\\
    =& 0.
  \end{split}
\end{equation*}
Thus $S$ is a Nijenhuis operator on the Lie conformal algebra $(V,[\cdot_{\lambda}\cdot]^{T})$.  By Corollary \ref{cor:Nijenhuis}, $[\cdot_{\lambda}\cdot]^{T}_S$ is a Lie conformal bracket on $V$. By Lemma \ref{lem:3brackets}, the brackets  $[\cdot_{\lambda}\cdot]^T_S$, $[\cdot_{\lambda}\cdot]_{\tilde{\varrho}}^T$ and $[\cdot_{\lambda}\cdot]^{N\circ T }$ are all Lie conformal brackets.
\end{proof}

\begin{thm}\label{thm:lie5.8}
Let $(T,S,N)$ be an $\GRBN$-structure on  an \LP pair $(A,[\cdot_{\lambda}\cdot];\rho)$. Then we have
\begin{itemize}
\item[$\rm(i)$] $T$ is an $\huaO$-operator on the \LP pair $(A,[\cdot_{\lambda}\cdot]_{N};\tilde{\rho})$.
\item[$\rm(ii)$] $N\circ T$ is an $\huaO$-operator on the \LP pair $(A,[\cdot_{\lambda}\cdot];{\rho})$.
  \end{itemize}
  \end{thm}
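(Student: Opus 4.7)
The plan is to reduce both parts to the algebraic identity $N\circ T = T\circ S$ together with the fact that $T$ is already an $\GRB$-operator on $(A,[\cdot_\lambda\cdot];\rho)$, and then feed in the structural results that have just been proved: Lemma \ref{lem:3brackets} (the three brackets on $V$ agree) and the proposition that $S$ is a Nijenhuis operator on the sub-adjacent Lie conformal algebra $(V,[\cdot_\lambda\cdot]^T)$.

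For part (i), I need to verify that
\begin{equation*}
[T(u)_{\lambda} T(v)]_{N} \;=\; T\bigl(\tilde{\rho}(T(u))_{\lambda}(v)-\tilde{\rho}(T(v))_{-\lambda-\partial}(u)\bigr) \;=\; T(\{u_{\lambda}v\}_{\tilde{\rho}}^{T}).
\end{equation*}
I would expand the left-hand side using the definition of the deformed bracket $[\cdot_\lambda\cdot]_N$ in \eqref{eq:deformed bracket}, then apply $N\circ T = T\circ S$ to rewrite $[NT(u)_\lambda T(v)]$ and $[T(u)_\lambda NT(v)]$ as $[T(Su)_\lambda T(v)]$ and $[T(u)_\lambda T(Sv)]$, respectively. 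Each of these three terms is of the form $[T(\cdot)_\lambda T(\cdot)]$, so I can pull $T$ out using the $\GRB$-operator axiom to obtain $T([Su_\lambda v]^T + [u_\lambda Sv]^T - S[u_\lambda v]^T) = T([u_\lambda v]_S^T)$. Finally Lemma \ref{lem:3brackets} identifies $[u_\lambda v]_S^T$ with $\{u_\lambda v\}_{\tilde{\rho}}^T$, which closes the argument.

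For part (ii), the required identity is
\begin{equation*}
[NT(u)_{\lambda} NT(v)] \;=\; NT\bigl(\rho(NT(u))_{\lambda}(v)-\rho(NT(v))_{-\lambda-\partial}(u)\bigr) \;=\; NT([u_\lambda v]^{NT}).
\end{equation*}
On the left, applying $NT = TS$ twice and the $\GRB$-operator axiom for $T$ yields $[NT(u)_\lambda NT(v)] = [T(Su)_\lambda T(Sv)] = T([Su_\lambda Sv]^T)$. On the right, using $NT = TS$ together with the defining relation \eqref{eq:lie49} of the $\GRBN$-structure gives $NT([u_\lambda v]^{NT}) = TS([u_\lambda v]^{NT}) = TS([u_\lambda v]^T_S)$. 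Equating the two reduces the claim to $[Su_\lambda Sv]^T = S([u_\lambda v]^T_S)$, which is precisely the Nijenhuis condition for $S$ on $(V,[\cdot_\lambda\cdot]^T)$ already established.

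No step requires a long computation, but the main conceptual obstacle is keeping the bookkeeping between the four brackets on $V$ (namely $[\cdot_\lambda\cdot]^T$, $[\cdot_\lambda\cdot]_S^T$, $\{\cdot_\lambda\cdot\}_{\tilde\rho}^T$ and $[\cdot_\lambda\cdot]^{NT}$) straight; the proof depends crucially on the prior lemma that all three of the latter brackets coincide, so I would state and invoke Lemma \ref{lem:3brackets} explicitly in both parts rather than re-deriving the identification.
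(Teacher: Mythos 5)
Your proposal is correct and follows essentially the same route as the paper: part (i) is the paper's computation read in the opposite direction (expand $[T(u)_\lambda T(v)]_N$, use $N\circ T=T\circ S$ and the $\GRB$-operator identity for $T$, then invoke Lemma \ref{lem:3brackets} to pass between $[\cdot_\lambda\cdot]_S^T$ and $\{\cdot_\lambda\cdot\}_{\tilde\rho}^T$). The only deviation is in part (ii), where you finish with the Nijenhuis identity $[S(u)_\lambda S(v)]^T=S([u_\lambda v]_S^T)$ for $S$ on $(V,[\cdot_\lambda\cdot]^T)$, while the paper instead applies the Nijenhuis identity for $N$ on $A$ in the form $N[T(u)_\lambda T(v)]_N=[NT(u)_\lambda NT(v)]$; both are valid one-line conclusions from results already established.
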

\begin{proof}
(i) Since $T$ is an $\huaO$-operator on the \LP pair $(A,[\cdot_{\lambda}\cdot];\rho)$
and $T\circ S=N\circ T$, we have
\begin{equation*}
  \begin{split}
    T\{u_{\lambda}v\}_{\tilde{\rho}}^{T}&=T([u_{\lambda}v]_{S}^{T})=T([S(u)_{\lambda}(v)]^{T}+[u_{\lambda}S(v)]^{T}-S[u_{\lambda}v]^{T})\\
      &=[T S(u)_{\lambda}T(v)]+[T(u)_{\lambda}T S(v)]-T S[u_{\lambda}v]^{T}\\
      &=[N T(u)_{\lambda}T(v)]+[T(u)_{\lambda}N T(v)]-N T[u_{\lambda}v]^{T}\\
      &=[T(u)_{\lambda}T(v)]_{N},
  \end{split}
\end{equation*}
which implies that $T$ is an $\huaO$-operator on the \LP pair $(A,[\cdot_{\lambda}\cdot]_{N};\tilde{\rho})$.

(ii) By \eqref{eq:lie49}, we have
$$ N T([u_{\lambda}v]^{N\circ T})=N T([u_{\lambda}v]_{S}^{T})=N[T(u)_{\lambda}T(v)]_{N}=[N T(u)_{\lambda}N T(v)],$$
which implies that $N\circ T$ is an $\huaO$-operator on the \LP pair $(A,[\cdot_{\lambda}\cdot];{\rho})$.
\end{proof}

In the following, we give the notion of compatible $\huaO$-operators on an \LP pair, and show that an $\GRBN$-structure gives rise to compatible $\huaO$-operators.

\begin{defi}
Let $T_{1},T_{2}$:$V\rightarrow A$ be two $\huaO$-operators on an \LP pair $(A,[\cdot_{\lambda}\cdot];\rho)$. If for any $k_{1},k_{2}\in\Real$, $k_{1}T_{1}+k_{2}T_{2}$ is still an $\huaO$-operator, then $T_{1}$ and $T_{2}$ are called {\bf compatible}.
\end{defi}

\begin{pro}
Let $T_{1},T_{2}$:$V\rightarrow A$ be two $\huaO$-operators on an \LP pair $(A,[\cdot_{\lambda}\cdot];\rho)$. Then $T_{1}$ and $T_{2}$ are compatible if and only if the following equation holds:
\begin{equation}\label{eq:lie56}
  \begin{split}
     [T_{1}(u)_{\lambda}T_{2}(v)]+[T_{2}(u)_{\lambda}T_{1}(v)]& =T_{1}(\rho(T_{2}(u))_{\lambda}(v)-\rho(T_{2}(v))_{-\lambda-\partial}(u))\\
      &+T_{2}(\rho(T_{1}(u))_{\lambda}(v)-\rho(T_{1}(v))_{-\lambda-\partial}(u)), \quad \forall u,v\in V.
   \end{split}
\end{equation}
\end{pro}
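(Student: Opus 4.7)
The plan is to simply expand the $\huaO$-operator defining equation with $T = k_1 T_1 + k_2 T_2$ and compare coefficients in the bilinear parameter $(k_1,k_2)$. Since the $\huaO$-operator condition is quadratic in $T$ (one $T$ appears on the left bracket on each side, and the composition $T\circ\rho(T(\cdot))$ appears on the right), substituting a linear combination produces a homogeneous polynomial of degree $2$ in $(k_1,k_2)$ on both sides.

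First I would write, for $T = k_1 T_1 + k_2 T_2$, the left-hand side
\begin{eqnarray*}
[T(u)_{\lambda} T(v)] &=& k_1^2 [T_1(u)_{\lambda} T_1(v)] + k_2^2 [T_2(u)_{\lambda} T_2(v)] \\
&& + k_1 k_2 \bigl([T_1(u)_{\lambda} T_2(v)] + [T_2(u)_{\lambda} T_1(v)]\bigr),
\end{eqnarray*}
and likewise expand the right-hand side
\begin{eqnarray*}
&& T\bigl(\rho(T(u))_{\lambda}(v) - \rho(T(v))_{-\lambda-\partial}(u)\bigr) \\
&=& k_1^2 T_1\bigl(\rho(T_1(u))_{\lambda}(v)-\rho(T_1(v))_{-\lambda-\partial}(u)\bigr) + k_2^2 T_2\bigl(\rho(T_2(u))_{\lambda}(v)-\rho(T_2(v))_{-\lambda-\partial}(u)\bigr) \\
&& + k_1 k_2 \Bigl[ T_1\bigl(\rho(T_2(u))_{\lambda}(v) - \rho(T_2(v))_{-\lambda-\partial}(u)\bigr) + T_2\bigl(\rho(T_1(u))_{\lambda}(v) - \rho(T_1(v))_{-\lambda-\partial}(u)\bigr) \Bigr].
\end{eqnarray*}

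Next, since $T_1$ and $T_2$ are themselves $\huaO$-operators, the $k_1^2$ terms on the two sides coincide, as do the $k_2^2$ terms. Hence the equality of the two sides for all $k_1,k_2$ reduces to the equality of the $k_1k_2$ coefficients, which is exactly \eqref{eq:lie56}. This proves the ``if'' direction directly, and specializing to $k_1=k_2=1$ (or any pair with both nonzero) in the converse argument yields \eqref{eq:lie56}.

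There is no real obstacle here; the statement is a standard polarization observation for quadratic conditions. The only mild care needed is to notice that because the $\huaO$-equation mixes $\lambda$ and $-\lambda-\partial$ asymmetrically, one must keep both orderings $[T_1(u)_\lambda T_2(v)]$ and $[T_2(u)_\lambda T_1(v)]$ in the cross term rather than trying to collapse them using conformal skew-symmetry, and similarly on the $\rho$-side. Once the expansion is written down, matching the $k_1k_2$ coefficient gives \eqref{eq:lie56} verbatim.
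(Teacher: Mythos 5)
Your proof is correct and is precisely the direct calculation the paper leaves to the reader: polarizing the quadratic $\huaO$-operator identity in $T=k_1T_1+k_2T_2$, cancelling the $k_1^2$ and $k_2^2$ terms via the hypotheses on $T_1,T_2$, and identifying the $k_1k_2$ coefficient with \eqref{eq:lie56}. Nothing is missing.
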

\begin{proof}
It follows by a direct calculation.
\end{proof}

\begin{ex}
  Let $A=\Comp[\partial]a\oplus \Comp[\partial]b$ be the quadratic Lie conformal algebra given in Example \ref{ex:N-operator rank2}. Define $R_1,R_2:A\longrightarrow A$ by
  \begin{alignat*}{2}
    R_1(a)&=-(a+b),\quad& R_1(b)&=(a+b),\\
     R_2(a)&=(a+b),\quad&  R_2(b)&=-(a+b).
  \end{alignat*}
  Then $R_1$ and $R_2$ are compatible Rota-Baxter operators.
\end{ex}

Using an  $\huaO$-operator and a Nijenhuis operator on a Lie conformal algebra, we can construct a pair of compatible $\huaO$-operators.

\begin{pro}\label{pro:lie6.3}
Let $T:V\rightarrow A$ be an $\huaO$-operator on an \LP pair $(A,[\cdot_{\lambda}\cdot];\rho)$ and $N$ be a Nijenhuis operator on $A$. Then $N\circ T$ is an $\huaO$-operator on $(A,[\cdot_{\lambda}\cdot];\rho)$ if and only if for all $u,v\in V$, the following equation holds:
\begin{equation}\label{eq:lie57}
  \begin{split}
    &N([NT(u)_{\lambda}T(v)]+[T(u)_{\lambda}NT(v)]) \\
      =&N(T(\rho(NT(u))_{\lambda}(v)-\rho(NT(v))_{-\lambda-\partial}(u))+NT(\rho(T(u))_{\lambda}(v)-\rho(T(v))_{-\lambda-\partial}(u))).
  \end{split}
\end{equation}
If, in addition, $N$ is invertible, then $T$ and $N\circ T$ are compatible. More explicitly, for any $\huaO$-operator $T$, if there exists an invertible Nijenhuis operator $N$ such that $N \circ T$ is also an  $\huaO$-operator, then $T$ and $N\circ T$ are compatible.
\end{pro}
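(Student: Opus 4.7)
The plan is to reduce the $\huaO$-operator condition for $N\circ T$ to equation \eqref{eq:lie57} by combining the $\huaO$-operator identity for $T$ with the defining relation \eqref{eq:Nijenhuis condition} of the Nijenhuis operator $N$, and then to observe that \eqref{eq:lie57} is nothing but the compatibility equation \eqref{eq:lie56} for the pair $(T,N\circ T)$ with an outer $N$ applied---so that invertibility of $N$ allows us to strip it.

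To prove the ``iff'', first I would apply \eqref{eq:Nijenhuis condition} with $a=T(u)$, $b=T(v)$ to rewrite
\begin{equation*}
[NT(u)_{\lambda}NT(v)]=N\bigl([NT(u)_{\lambda}T(v)]+[T(u)_{\lambda}NT(v)]\bigr)-N^{2}[T(u)_{\lambda}T(v)],
\end{equation*}
and then replace $[T(u)_{\lambda}T(v)]$ by $T\bigl(\rho(T(u))_{\lambda}(v)-\rho(T(v))_{-\lambda-\partial}(u)\bigr)$ using the $\huaO$-operator property of $T$. The operator $N\circ T$ is an $\huaO$-operator exactly when $[NT(u)_{\lambda}NT(v)]=NT\bigl(\rho(NT(u))_{\lambda}(v)-\rho(NT(v))_{-\lambda-\partial}(u)\bigr)$. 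Equating the two resulting expressions for $[NT(u)_{\lambda}NT(v)]$ and moving the $N^{2}T(\cdots)$ term to the right-hand side yields precisely \eqref{eq:lie57}, and every step in the derivation is reversible.

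For the second assertion, I would specialize the compatibility equation \eqref{eq:lie56} to $T_{1}=T$, $T_{2}=N\circ T$ and note that this specialization reads
\begin{equation*}
[NT(u)_{\lambda}T(v)]+[T(u)_{\lambda}NT(v)]=T\bigl(\rho(NT(u))_{\lambda}(v)-\rho(NT(v))_{-\lambda-\partial}(u)\bigr)+NT\bigl(\rho(T(u))_{\lambda}(v)-\rho(T(v))_{-\lambda-\partial}(u)\bigr),
\end{equation*}
which is exactly what one obtains by stripping an outer $N$ from both sides of \eqref{eq:lie57}. Hence when $N$ is invertible the two identities are equivalent, so the first part of the theorem combined with the hypothesis that $N\circ T$ is an $\huaO$-operator forces the compatibility of $T$ and $N\circ T$. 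The ``more explicitly'' sentence is then only a rephrasing of this implication.

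I do not anticipate any real obstacle beyond careful bookkeeping of the $\lambda$ and $-\lambda-\partial$ slots in the various conformal brackets; once the Nijenhuis identity has been applied to $[NT(u)_{\lambda}NT(v)]$ the whole argument is a matter of term-matching, and the invertibility of $N$ plays the purely formal role of canceling the outer $N$ between \eqref{eq:lie57} and \eqref{eq:lie56}.
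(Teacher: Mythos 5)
Your proposal is correct and follows essentially the same route as the paper: apply the Nijenhuis identity to $[NT(u)_{\lambda}NT(v)]$, substitute the $\huaO$-operator identity for $T$, and match terms to obtain \eqref{eq:lie57}, then cancel the outer $N$ by invertibility to recover the compatibility condition \eqref{eq:lie56} for $T$ and $N\circ T$. No gaps.
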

\begin{proof}
Since $N$ is a Nijenhuis operator, we have
$$ [NT(u)_{\lambda}NT(v)]=N([NT(u)_{\lambda}T(v)]+[T(u)_{\lambda}NT(v)])-N^{2}([T(u)_{\lambda}T(v)]).$$
By the fact that $T$ is an $\huaO$-operator, the following equation
$$[NT(u)_{\lambda}NT(v)]=NT\big(\rho(NT(u))_{\lambda}(v)-\rho(NT(v))_{-\lambda-\partial}(u)\big)$$
holds if and only if \eqref{eq:lie57} holds.

If $N\circ T$ is an $\huaO$-operator and $N$ is invertible, then we have
\begin{eqnarray*}
[NT(u)_{\lambda}T(v)]+[T(u)_{\lambda}NT(v)]&=&T(\rho(NT(u))_{\lambda}(v)-\rho(NT(v))_{-\lambda-\partial}(u))\\
&&+NT(\rho(T(u))_{\lambda}(v)-\rho(T(v))_{-\lambda-\partial}(u)).
\end{eqnarray*}
This is exactly the condition that $N\circ T$ and $T$ are compatible.
\end{proof}

A pair of compatible $\huaO$-operators can also give rise to a Nijenhuis operator under an invertible condition.
\begin{pro}\label{pro:lie6.4}
Let $T_{1},T_{2}:V\rightarrow A$ be two $\huaO$-operators on an \LP pair $(A,[\cdot_{\lambda}\cdot];\rho)$. Suppose that $T_{2}$ is invertible. If $T_{1}$, $T_{2}$ are compatible, then $N=T_{1}\circ T_{2}^{-1}$ is a Nijenhuis operator on the Lie conformal algebra $A$.
\end{pro}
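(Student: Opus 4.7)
The plan is to prove the Nijenhuis identity directly by exploiting the invertibility of $T_2$ to parametrize arbitrary elements of $A$. Since $T_2$ is bijective (and $\Comp[\partial]$-linear), for any $a,b\in A$ there exist unique $u,v\in V$ with $a=T_2(u)$ and $b=T_2(v)$, and then $N(a)=T_1(u)$, $N(b)=T_1(v)$. First I would note that $N = T_1\circ T_2^{-1}$ is a $\Comp[\partial]$-module homomorphism, so in particular $N\circ\partial = \partial\circ N$, which is the first requirement for a Nijenhuis operator.

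Next I would expand every term of
\[
N\bigl([N(a)_\lambda b] + [a_\lambda N(b)] - N[a_\lambda b]\bigr) - [N(a)_\lambda N(b)]
\]
in terms of $T_1,T_2$ and the module action $\rho$. Using the $\huaO$-operator equation for $T_2$ we have $[a_\lambda b] = [T_2(u)_\lambda T_2(v)] = T_2\bigl(\rho(T_2(u))_\lambda(v)-\rho(T_2(v))_{-\lambda-\partial}(u)\bigr)$, and hence, applying $N$ and using the identity $N\circ T_2 = T_1$,
\[
N[a_\lambda b] = T_1\bigl(\rho(T_2(u))_\lambda(v)-\rho(T_2(v))_{-\lambda-\partial}(u)\bigr).
\]
Similarly $[N(a)_\lambda N(b)] = [T_1(u)_\lambda T_1(v)] = T_1\bigl(\rho(T_1(u))_\lambda(v)-\rho(T_1(v))_{-\lambda-\partial}(u)\bigr)$ by the $\huaO$-operator equation for $T_1$.

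The remaining terms $[N(a)_\lambda b] + [a_\lambda N(b)] = [T_1(u)_\lambda T_2(v)] + [T_2(u)_\lambda T_1(v)]$ are precisely the cross terms controlled by the compatibility equation \eqref{eq:lie56}. Substituting \eqref{eq:lie56} gives
\[
[N(a)_\lambda b] + [a_\lambda N(b)] - N[a_\lambda b] = T_2\bigl(\rho(T_1(u))_\lambda(v)-\rho(T_1(v))_{-\lambda-\partial}(u)\bigr),
\]
because the $T_1$-part in \eqref{eq:lie56} cancels against $N[a_\lambda b]$ computed above. Now I apply $N$ to both sides; since $N\circ T_2 = T_1$, the right-hand side becomes $T_1\bigl(\rho(T_1(u))_\lambda(v)-\rho(T_1(v))_{-\lambda-\partial}(u)\bigr)$, which is exactly $[T_1(u)_\lambda T_1(v)] = [N(a)_\lambda N(b)]$. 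This establishes the Nijenhuis condition \eqref{eq:Nijenhuis condition}.

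I do not expect a real obstacle here; the proof is essentially an algebraic matching exercise once one makes the substitution $a = T_2(u)$, $b = T_2(v)$. The only point requiring care is the bookkeeping of the spectral parameters ($\lambda$ versus $-\lambda-\partial$) so that the compatibility identity \eqref{eq:lie56} is applied in the correct form, and the observation that $N$ is well defined as a $\Comp[\partial]$-module map (which uses that $T_2^{-1}$ automatically commutes with $\partial$ since $T_2$ does).
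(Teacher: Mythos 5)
Your proof is correct and follows essentially the same route as the paper: substitute $a=T_2(u)$, $b=T_2(v)$, use the $\huaO$-operator identities for $T_2$ and for $T_1=N\circ T_2$ together with the compatibility relation \eqref{eq:lie56}, and then apply $N$ (via $N\circ T_2=T_1$) to obtain the Nijenhuis condition. The bookkeeping of the cancellation is the same as in the paper's computation, just organized slightly more cleanly.
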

\begin{proof}
Since $T_{1}=N\circ T_{2}$ is an $\huaO$-operator, we have
$$  [NT_{2}(u)_{\lambda}NT_{2}(v)]=NT_{2}(\rho(NT_{2}(u)_{\lambda}(v)-\rho(NT_{2}(v)_{-\lambda-\partial}(u)).$$
Since $T_{1}$ and $T_{2}$ are compatible, we have
\begin{equation*}
  \begin{split}
    &[NT_{2}(u)_{\lambda}T_{2}(v)]+[T_{2}(u)_{\lambda}NT_{2}(v)] \\
      =&T_{2}(\rho(NT_{2}(u))_{\lambda}(v)-\rho(NT_{2}(v))_{-\lambda-\partial}(u))+NT_{2}(\rho(T_{2}(u)){\lambda}(v)-\rho(T_{2}(v))_{-\lambda-\partial}(u))\\
      =&T_{2}(\rho(NT_{2}(u))_{\lambda}(v)-\rho(NT_{2}(v))_{-\lambda-\partial}(u))+N([T_{2}(u)_{\lambda}T_{2}(v)]).
  \end{split}
\end{equation*}
Thus we have
\begin{eqnarray*}
  &&[NT_{2}(u)_{\lambda}NT_{2}(v)]-N[NT_{2}(u)_{\lambda}T_{2}(v)]-N[T_{2}(u)_{\lambda}NT_{2}(v)]+N^{2}([T_{2}(u)_{\lambda}T_{2}(v)])\\
  &=&NT_{2}(\rho(NT_{2}(u)_{\lambda}(v)-\rho(NT_{2}(v)_{-\lambda-\partial}(u))+T_{2}(\rho(NT_{2}(u))_{\lambda}(v)+\rho(NT_{2}(v))_{-\lambda-\partial}(u))\\
  &&-N^2([T_{2}(u)_{\lambda}T_{2}(v)])+N^2([T_{2}(u)_{\lambda}T_{2}(v)])\\
  &=&0.
\end{eqnarray*}
Since $T_{2}$ is invertible, for any $a,b\in A$, there exist $u,v\in V$ such that $T_{2}(u)=a$ and $T_{2}(v)=b$.  The above equality implies that
$$[Na_{\lambda}Nb]=N[Na_{\lambda}b]+N[a_{\lambda}Nb]-N^{2}[a_{\lambda}b].$$
Thus $N$ is a Nijenhuis operator on the Lie conformal algebra $A$.
\end{proof}
By Propositions \ref{pro:lie6.3} and \ref{pro:lie6.4}, we have

\begin{cor}
Let $T_{1}, T_{2}:V\rightarrow A$ be two $\huaO$-operators on an \LP pair $(A,[\cdot_{\lambda}\cdot];\rho)$. Suppose $T_{1}$ and $T_{2}$ are invertible. Then $T_{1}$ and $T_{2}$ are compatible if and only if $N=T_{1}\circ T_{2}^{-1}$ is a Nijenhuis operator on $A$.
\end{cor}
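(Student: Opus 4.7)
The corollary is an immediate consequence of the two preceding propositions, so my plan is simply to combine them, being careful about the invertibility hypotheses.

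For the ``only if'' direction, I would invoke Proposition \ref{pro:lie6.4} directly. Since $T_{2}$ is invertible, the hypotheses of that proposition are met, and compatibility of $T_1$ and $T_2$ yields that $N = T_{1}\circ T_{2}^{-1}$ is a Nijenhuis operator on $A$. Nothing further is needed.

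For the ``if'' direction, I would observe that $N = T_{1}\circ T_{2}^{-1}$ is invertible as the composition of two invertible $\Comp[\partial]$-module homomorphisms, and that $T_{1} = N\circ T_{2}$. Thus $T_{2}$ is an $\huaO$-operator, $N$ is an invertible Nijenhuis operator on $A$, and $N\circ T_{2} = T_{1}$ is again an $\huaO$-operator. The second half of Proposition \ref{pro:lie6.3} then applies verbatim to conclude that $T_{2}$ and $N\circ T_{2} = T_{1}$ are compatible.

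There is no real obstacle here — the two previous propositions were essentially designed to be glued together in this way, and the invertibility of both $T_{1}$ and $T_{2}$ (rather than just one of them) is exactly what one needs to pass freely between the formulations ``$N\circ T$ is an $\huaO$-operator'' and ``$T_{1}\circ T_{2}^{-1}$ is a Nijenhuis operator''. The only minor care point is to note explicitly that $N$ inherits invertibility from $T_1, T_2$ before applying Proposition \ref{pro:lie6.3}.
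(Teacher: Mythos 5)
Your proposal is correct and matches the paper's intended argument exactly: the paper derives this corollary precisely by combining Proposition \ref{pro:lie6.4} (for the ``only if'' direction, using only the invertibility of $T_2$) with the final assertion of Proposition \ref{pro:lie6.3} (for the ``if'' direction, where the invertibility of $N=T_1\circ T_2^{-1}$ is inherited from that of $T_1$ and $T_2$). Your explicit remark that $N$ is invertible before invoking Proposition \ref{pro:lie6.3} is the right care point and nothing further is needed.
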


The following proposition shows that compatible $\huaO$-operators can be obtained from $\GRBN$-structures.

\begin{pro}\label{pro:compatible O}
Let $(T,S,N)$ be an $\GRBN$-structure on an \LP pair $(A,[\cdot_{\lambda}\cdot];\rho)$. Then $T$ and $T\circ S$ are compatible $\huaO$-operators.
\end{pro}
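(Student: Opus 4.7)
The plan is to unpack the compatibility equation \eqref{eq:lie56} with $T_1 := T$ and $T_2 := T\circ S$, and verify that both sides reduce to the common expression $T\{u_\lambda v\}^T_{\tilde{\rho}} + NT[u_\lambda v]^T$. The whole argument amounts to recognizing that the $\GRBN$-hypothesis has already packaged $T$ as an $\huaO$-operator with respect to two different \LP pair structures on $A$, so the mixed compatibility identity essentially writes itself.

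First I would invoke Theorem \ref{thm:lie5.8}(ii) to record that $N\circ T = T\circ S$ is already an $\huaO$-operator on $(A,[\cdot_\lambda\cdot];\rho)$, leaving only the mixed identity \eqref{eq:lie56} to check. Using $T\circ S = N\circ T$ and the definition \eqref{eq:deformed bracket} of the deformed bracket, the left-hand side of \eqref{eq:lie56} rewrites as
\[
[T(u)_\lambda (T\circ S)(v)] + [(T\circ S)(u)_\lambda T(v)] = [T(u)_\lambda T(v)]_N + N[T(u)_\lambda T(v)].
\]
Theorem \ref{thm:lie5.8}(i) replaces $[T(u)_\lambda T(v)]_N$ by $T\{u_\lambda v\}^T_{\tilde{\rho}}$, and the $\huaO$-operator property of $T$ on $(A;\rho)$ replaces $N[T(u)_\lambda T(v)]$ by $NT[u_\lambda v]^T$, producing the claimed expression for the LHS.

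For the right-hand side I would use $T\circ S = N\circ T$ to identify the first summand $T(\rho((T\circ S)(u))_\lambda v - \rho((T\circ S)(v))_{-\lambda-\partial} u)$ with $T[u_\lambda v]^{N\circ T}$ via \eqref{eq:lie conformal 45}, and the second summand $(T\circ S)(\rho(T(u))_\lambda v - \rho(T(v))_{-\lambda-\partial} u)$ with $NT[u_\lambda v]^T$. Lemma \ref{lem:3brackets} then identifies $[u_\lambda v]^{N\circ T}$ with $\{u_\lambda v\}^T_{\tilde{\rho}}$, so the RHS also equals $T\{u_\lambda v\}^T_{\tilde{\rho}} + NT[u_\lambda v]^T$, matching the LHS.

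There is no real technical obstacle here, since the required work has been done in Theorem \ref{thm:lie5.8} and Lemma \ref{lem:3brackets}. The one place to be careful is the systematic use of $T\circ S = N\circ T$ to translate between the two natural guises of $T_2$, and the correct application of Lemma \ref{lem:3brackets} to identify the two a priori different brackets $[\cdot_\lambda\cdot]^{N\circ T}$ and $\{\cdot_\lambda\cdot\}^T_{\tilde{\rho}}$ on $V$. Conceptually, the proof highlights that a Nijenhuis structure is tailored so that $T$ is still an $\huaO$-operator after simultaneously deforming the bracket on $A$ by $N$ and the action on $V$ by $S$, and this dual $\huaO$-operator identity is precisely what forces the compatibility of $T$ with $T\circ S$.
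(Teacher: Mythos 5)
Your proof is correct. You verify the mixed compatibility identity \eqref{eq:lie56} for the pair $(T,\,T\circ S)$ by reducing both sides to the common value $T\{u_\lambda v\}^T_{\tilde\rho}+NT[u_\lambda v]^T$, after first citing Theorem \ref{thm:lie5.8}(ii) so that the characterization \eqref{eq:lie56} is applicable; each individual step (the rewriting of the left-hand side via \eqref{eq:TS commute} and the deformed bracket \eqref{eq:deformed bracket}, the use of Theorem \ref{thm:lie5.8}(i), and the identification of the right-hand side via \eqref{eq:lie conformal 45} and Lemma \ref{lem:3brackets}) checks out. The paper takes a mildly different route: it shows directly that $T+T\circ S$ satisfies the $\huaO$-operator identity, expanding $(T+T\circ S)\bigl([u_\lambda v]^{T+T\circ S}\bigr)$ with the help of \eqref{eq:lie49} and the explicit formula \eqref{eq:deformed S-bracket} for $[\cdot_\lambda\cdot]^T_S$, and never invokes $\tilde\rho$ or Lemma \ref{lem:3brackets}. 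The two arguments are equivalent in substance (showing the sum is an $\huaO$-operator is the same as verifying the cross terms \eqref{eq:lie56}), but yours is somewhat more structural, exploiting the fact that Theorem \ref{thm:lie5.8} already encodes $T$ as an $\huaO$-operator for the $N$-deformed bracket, whereas the paper's is a self-contained bracket expansion; your version makes it more transparent \emph{why} the cross terms match, at the cost of leaning on more of the preceding machinery.
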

\begin{proof}
It is sufficient to prove that $T+T\circ S$ is an $\huaO$-operator. It is not hard to see that
$$ [u_{\lambda}v]^{T+T\circ S}=[u_{\lambda}v]^{T}+[u_{\lambda}v]^{T\circ S}=[u_{\lambda}v]^{T}+[u_{\lambda}v]_{S}^{T}.$$
Thus we have
\begin{equation*}
  \begin{split}
    &(T+T\circ S)([u_{\lambda}v]^{T+T\circ S})\\
      =& T([u_{\lambda}v]^{T})+T\circ S([u_{\lambda}v]_{S}^{T})+T\circ S([u_{\lambda}v]^{T})+T([u_{\lambda}v]_{S}^{T})\\
      =& T([u_{\lambda}v]^{T})+T\circ S([u_{\lambda}v]_{S}^{T})+T\circ S([u_{\lambda}v]^{T})\\
      &+T([S(u)_{\lambda}v]^{T}+[u_{\lambda}S(v)]^{T}-S[u_{\lambda}v]^{T})\\
      =& T([u_{\lambda}v]^{T})+T\circ S([u_{\lambda}v]_{S}^{T})+T([S(u)_{\lambda}v]^{T}+[u_{\lambda}S(v)]^{T})\\
      =& [T(u)_{\lambda}T(v)]+[T\circ S(u)_{\lambda}T\circ S(v)]+[T\circ S(u)_{\lambda}T(v)]+[T(u)_{\lambda}T\circ S(v)]\\
      =& [(T+T\circ S)(u)_{\lambda}(T+T\circ S)(v)],
  \end{split}
\end{equation*}
which implies that $T+T\circ S$ is an $\huaO$-operator.
\end{proof}

Similar to the properties of $\GRBN$-structures on Lie algebras given in \cite{HLS}, we also have
\begin{lem}
Let $(T,S,N)$ be an $\GRBN$-structure on an \LP pair $(A,[\cdot_{\lambda}\cdot];\rho)$. Then
\begin{itemize}
\item[$\rm(i)$] for all $k,i\in \Nat$, we have
\begin{equation} \label{eq:lie58}
  T_{k}[u_{\lambda}v]_{S^{k+i}}^{T}=[T_{k}(u)_{\lambda}T_{k}(v)]_{N^{i}};
\end{equation}
\item[$\rm(ii)$] for all $k,i\in \Nat$ such that $i\leq k$,
\begin{equation}\label{eq:lie61}
  [u_{\lambda}v]^{T^{k}}=[u_{\lambda}v]_{S^{k}}^{T}=S^{k-i}[u_{\lambda}v]^{T_{i}},
\end{equation}
where $T_{k}=T\circ S^{k}=N^{k}\circ T$ and set $T_{0}=T$.
  \end{itemize}
\end{lem}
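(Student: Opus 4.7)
My plan is to establish part $(ii)$ first and then derive $(i)$ from it, since $(i)$ relies on reinterpreting $(T,S^k,N^k)$ as an $\GRBN$-structure, which in turn requires the first identity of $(ii)$.

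For the first equality $[u_\lambda v]^{T_k}=[u_\lambda v]_{S^k}^T$ of $(ii)$, I would proceed by induction on $k$, the base case $k=1$ being the defining relation \eqref{eq:lie49} of an $\GRBN$-structure. Expanding both brackets according to their definitions and using $T\circ S^k = N^k\circ T$, the inductive step reduces to verifying the symmetric commutation relation $\rho(Tu)_\lambda S^k(v) - S^k\rho(Tu)_\lambda v = \rho(Tv)_{-\lambda-\partial}S^k(u) - S^k\rho(Tv)_{-\lambda-\partial}u$. I would obtain this from the telescoping identity $\rho(N^j a)_\lambda S(w) - S\rho(N^j a)_\lambda w = \rho(a)_\lambda S^{j+1}(w) - S\rho(a)_\lambda S^j(w)$, which itself is proved by iterating the Nijenhuis-structure axiom \eqref{eq:lie conformal36}. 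For the second equality $[u_\lambda v]_{S^k}^T = S^{k-i}[u_\lambda v]^{T_i}$, I would use that $S$ is a Nijenhuis operator on the sub-adjacent Lie conformal algebra $(V,[\cdot_\lambda\cdot]^T)$ (proved in the proposition immediately preceding Theorem \ref{thm:lie5.8}): successive $S$-deformations of this bracket stack appropriately, allowing one to induct on $k-i$ starting from the trivial case $i=k$.

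For part $(i)$, once $(ii)$ is in hand, Corollary \ref{cor:Hirac-Nijenhuis copair} produces the Nijenhuis structure $(N^k,S^k)$, and the first identity of $(ii)$ is precisely the remaining axiom \eqref{eq:lie49} needed to promote $(T,S^k,N^k)$ to an $\GRBN$-structure. Applying Theorem \ref{thm:lie5.8}(i) to this promoted structure yields $T[u_\lambda v]_{S^k}^T = [T(u)_\lambda T(v)]_{N^k}$ for every $k$. Then
$$T_k[u_\lambda v]_{S^{k+i}}^T = N^k\!\circ T[u_\lambda v]_{S^{k+i}}^T = N^k[T(u)_\lambda T(v)]_{N^{k+i}} = [N^k T(u)_\lambda N^k T(v)]_{N^i} = [T_k(u)_\lambda T_k(v)]_{N^i},$$
where the penultimate equality is the classical Nijenhuis hierarchy identity $N^k[a_\lambda b]_{N^{k+i}} = [N^k(a)_\lambda N^k(b)]_{N^i}$. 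That identity is itself obtained by iterating Corollary \ref{cor:Nijenhuis}, according to which $N$ is a Lie conformal algebra homomorphism from $(A,[\cdot_\lambda\cdot]_{N^{j+1}})$ to $(A,[\cdot_\lambda\cdot]_{N^j})$, so that composing $k$ copies of $N$ produces the displayed formula.

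The main technical obstacle lies in the induction for the first identity of $(ii)$: the Nijenhuis-structure axiom \eqref{eq:lie conformal36} must be iterated with care so that the resulting commutator identity holds symmetrically in both the $\lambda$- and $(-\lambda-\partial)$-slots of $\rho$, and it is precisely this symmetry that lets $[u_\lambda v]^{T_k}$ be matched term by term with $[u_\lambda v]_{S^k}^T$. Once that step is secured, the remaining arguments reduce to bookkeeping with the structural results already established in the paper.
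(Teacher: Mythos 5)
The paper itself contains no proof of this lemma --- it only points to Lemmas 4.8 and 4.9 of \cite{HLS} --- so your proposal has to be judged as a reconstruction of that argument, and as such it is essentially correct and complete in outline. The one step you rightly single out as the technical core does close: setting $D_k(u,v)=\rho(Tu)_\lambda S^k(v)-S^k\rho(Tu)_\lambda(v)$, the identity $[u_\lambda v]^{T_k}=[u_\lambda v]^T_{S^k}$ is equivalent to the symmetry of $D_k$ in the two slots, the case $k=1$ is exactly \eqref{eq:lie49} after cancelling $\rho(NTu)_\lambda v=\rho(TSu)_\lambda v$, and the inductive step follows from the decomposition $D_{k+1}(u,v)=D_k(u,S(v))+S^k D_1(u,v)$ together with the relation $E(S(w))=S(E(w))$ for $E(w):=\rho(TS(v))_{-\lambda-\partial}(w)-\rho(T(v))_{-\lambda-\partial}S(w)$, which is precisely \eqref{eq:lie conformal36} applied with $a=T(v)$ and $NT=TS$ --- i.e.\ your telescoped Nijenhuis axiom. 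Part (i) then goes through as you describe: Corollary \ref{cor:Hirac-Nijenhuis copair} plus the first identity of (ii) at level $m$ promote $(T,S^{m},N^{m})$ to an $\GRBN$-structure, the computation in the proof of Theorem \ref{thm:lie5.8}(i) gives $T[u_\lambda v]^T_{S^{m}}=[T(u)_\lambda T(v)]_{N^{m}}$, and the hierarchy identity $N^k[a_\lambda b]_{N^{k+i}}=[N^k(a)_\lambda N^k(b)]_{N^i}$ finishes. Do note that this last identity is not literally Corollary \ref{cor:Nijenhuis}; it also needs the standard auxiliary facts that $N$ is Nijenhuis for each deformed bracket and that $([\cdot_\lambda\cdot]_{N^a})_{N^b}=[\cdot_\lambda\cdot]_{N^{a+b}}$, which you would have to record (they are easy consequences of \eqref{eq:Nijenhuis condition}).

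One genuine discrepancy deserves attention: the third expression in \eqref{eq:lie61}, read literally as the operator $S^{k-i}$ applied to the element $[u_\lambda v]^{T_i}$, is \emph{not} what your argument proves and is false in general --- already for $k=1$, $i=0$ it would force $[S(u)_\lambda v]^T+[u_\lambda S(v)]^T=2S[u_\lambda v]^T$, a derivation-type condition on $S$ that the $\GRBN$-axioms do not imply. What your method (stacking of $S$-deformations, using that $S$ is a Nijenhuis operator on $(V,[\cdot_\lambda\cdot]^T)$) establishes is $[u_\lambda v]^T_{S^k}=[u_\lambda v]^{T_i}_{S^{k-i}}$, i.e.\ the $S^{k-i}$-deformation of the bracket $[\cdot_\lambda\cdot]^{T_i}$; that is the form of the statement in \cite{HLS} and is surely what is intended here, the printed formula being a typo. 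So your proof is of the corrected statement --- which is the right thing to prove --- but you should say so explicitly rather than carrying the literal formula through your write-up.
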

\begin{proof}
  The proof  is similar to that of Lemmas 4.8 and 4.9 in \cite{HLS}.
\end{proof}

\begin{pro}\label{pro:compatible O2}
Let $(T,S,N)$ be an $\GRBN$-structure on an \LP pair $(A,[\cdot_{\lambda}\cdot];\rho)$.  Then all $T_{k}=N^{k}\circ T$ are $\huaO$-operators on the \LP pair $(A;\rho)$ and for all $k,l\in \Nat$, $T_{k}$ and $T_{l}$ are compatible.
\end{pro}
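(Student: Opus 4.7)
The plan is to derive both assertions directly from the two identities \eqref{eq:lie58} and \eqref{eq:lie61} of the preceding lemma, together with the defining identity of an $\huaO$-operator, rather than by re-checking $\GRBN$-compatibility from scratch.

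First I would show that each $T_k=N^k\circ T=T\circ S^k$ is an $\huaO$-operator on the \LP pair $(A;\rho)$. Specialising \eqref{eq:lie58} to $i=0$ gives $T_k[u_\lambda v]^T_{S^k}=[T_k(u)_\lambda T_k(v)]$, and \eqref{eq:lie61} taken with $i=k$ identifies $[u_\lambda v]^T_{S^k}$ with $[u_\lambda v]^{T_k}=\rho(T_k(u))_\lambda(v)-\rho(T_k(v))_{-\lambda-\partial}(u)$. Splicing the two identities yields the $\huaO$-operator equation for $T_k$ immediately.

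Next I would tackle the compatibility claim. Bilinearity of $[\cdot_\lambda\cdot]$ and of $\rho$ implies $[u_\lambda v]^{aT_k+bT_l}=a[u_\lambda v]^{T_k}+b[u_\lambda v]^{T_l}$, so after using that $T_k$ and $T_l$ are individually $\huaO$-operators the verification that every linear combination $aT_k+bT_l$ is an $\huaO$-operator collapses to the single cross identity
\begin{equation*}
[T_k(u)_\lambda T_l(v)]+[T_l(u)_\lambda T_k(v)]=T_k[u_\lambda v]^{T_l}+T_l[u_\lambda v]^{T_k}.
\end{equation*}
To prove this, I would assume without loss of generality $k\le l$ and set $m=l-k$. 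Applying \eqref{eq:lie58} with index $i=m$ and rewriting $[u_\lambda v]^T_{S^{k+m}}=[u_\lambda v]^{T_l}$ via \eqref{eq:lie61} gives $T_k[u_\lambda v]^{T_l}=[T_k(u)_\lambda T_k(v)]_{N^m}$. Unfolding the deformed bracket $[\cdot_\lambda\cdot]_{N^m}$ on the right, using $N^m\circ T_k=T_l$ and the $\huaO$-operator identity $T_k[u_\lambda v]^{T_k}=[T_k(u)_\lambda T_k(v)]$ to rewrite $N^m[T_k(u)_\lambda T_k(v)]$ as $T_l[u_\lambda v]^{T_k}$, and rearranging, produces exactly the desired cross identity.

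I do not anticipate a serious obstacle, since the heaviest combinatorial work has already been absorbed into the preceding lemma. The only delicate point is the bookkeeping in the expansion of $[\cdot_\lambda\cdot]_{N^m}$ on elements of the form $T_k(u),T_k(v)$, where one has to keep track of the two guises $N^m\circ T_k=T_l=T_k\circ S^m$ of the intertwining relation; once this is set up cleanly, the cross identity drops out in a few lines and compatibility of the whole hierarchy $\{T_k\}_{k\in\Nat}$ follows at once.
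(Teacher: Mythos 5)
Your proof is correct and follows essentially the same route as the paper: both halves rest entirely on the two identities \eqref{eq:lie58} and \eqref{eq:lie61} of the preceding lemma, with the first assertion obtained exactly as in the paper and compatibility reduced to showing that the sum of two operators in the hierarchy is again an $\huaO$-operator. The only cosmetic difference is that you expand the deformed bracket $[\cdot_{\lambda}\cdot]_{N^{m}}$ on $A$ via \eqref{eq:lie58} with $i=m$, whereas the paper expands the deformed bracket $[\cdot_{\lambda}\cdot]^{T_{k}}_{S^{i}}$ on $V$; the two computations are equivalent under the intertwining relation $N^{m}\circ T_{k}=T_{k}\circ S^{m}$.
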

\begin{proof}
By \eqref{eq:lie58} and \eqref{eq:lie61} with $i=0$, we have
$$ T_{k}[u_{\lambda}v]^{T_{k}}=[T_{k}(u)_{\lambda}T_{k}(v)],$$
which implies that $T_{k}$ is an $\huaO$-operator.

For the second assertion, we only need to prove that $T^{k}+T^{k+i}$ is an $\huaO$-operator for all $k,i\in \Nat$. By \eqref{eq:lie61}, we have
$$ [u_{\lambda}v]^{T_{k}+T_{k+i}}=[u_{\lambda}v]^{T_{k}}+[u_{\lambda}v]^{T_{k+i}}=[u_{\lambda}v]^{T_{k}}+[u_{\lambda}v]_{S^{i}}^{T_{k}}.$$
Thus, we have
\begin{equation*}
  \begin{split}
    &(T_{k}+T_{k+i})([u_{\lambda}v]^{T_{k}+T_{k+i}})\\
  =& T_{k}([u_{\lambda}v]^{T_{k}})+T_{k}([u_{\lambda}v]_{S^{i}}^{T_{k}})+T_{k+i}([u_{\lambda}v]^{T_{k}})+T_{k+i}([u_{\lambda}v]_{S^{i}}^{T_{k}})\\
  =& T_{k}([u_{\lambda}v]^{T_{k}})+T_{k+i}([u_{\lambda}v]^{T_{k}})+T_{k+i}([u_{\lambda}v]_{S^{i}}^{T_{k}})\\
  &+T_{k}([S^{i}(u)_{\lambda}v]^{T_{k}}+[u_{\lambda}S^{i}(v)]^{T_{k}}-S^{i}[u_{\lambda}v]^{T_{k}})\\
  =&T_{k}([u_{\lambda}v]^{T_{k}})+T_{k+i}([u_{\lambda}v]_{S^{i}}^{T_{k}})+T_{k}([S^{i}(u)_{\lambda}v]^{T_{k}})+T_{k}([u_{\lambda}S^{i}(v)]^{T_{k}})\\
  =& [T_{k}(u)_{\lambda}T_{k}(v)]+[T_{k+i}(u)_{\lambda}T_{k+i}(v)]+[T_{k+i}(u)_{\lambda}T_{k}(v)]+[T_{k}(u)_{\lambda}T_{k+i}(v)]\\
  =&[(T_{k}+T_{k+i})(u)_{\lambda}(T_{k}+T_{k+i})(v)],
  \end{split}
\end{equation*}
which implies that $T_{k}+T_{k+i}$ is an $\huaO$-operator. Thus $T_{k}$ and $T_{l}$ are compatible for all $k,l\in\Nat$.
\end{proof}

Compatible $\huaO$-operators can give rise to $\GRBN$-structures.

\begin{thm}\label{thm:ON construction}
Let $T,T_{1}:V\rightarrow A$ be two $\huaO$-operators on an \LP pair $(A,[\cdot_{\lambda}\cdot];\rho)$. Suppose that $T$ is invertible. If $T$ and $T_{1}$ are compatible, then
\begin{itemize}
\item[$\rm(i)$]$(T,S=T^{-1}\circ T_1,N=T_1\circ T^{-1})$ is an $\GRBN$-structure on the \LP pair $(A,[\cdot_{\lambda}\cdot];\rho)$;
\item[$\rm(ii)$]$(T_1,S=T^{-1}\circ T_1,N=T_1\circ T^{-1})$ is an $\GRBN$-structure on the \LP pair $(A,[\cdot_{\lambda}\cdot];\rho)$.
\end{itemize}
\end{thm}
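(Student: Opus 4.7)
The plan is to verify, in each of (i) and (ii), the four defining conditions of an $\GRBN$-structure: that the operator in question is an $\huaO$-operator (already given), that $(N,S)$ is a Nijenhuis structure on $(A;\rho)$, that the commutation $N\circ T=T\circ S$ (respectively $N\circ T_{1}=T_{1}\circ S$) holds, and that the bracket identity \eqref{eq:lie49} is satisfied. First I would apply Proposition \ref{pro:lie6.4} directly: the compatibility of $T$ and $T_{1}$ together with the invertibility of $T$ give that $N=T_{1}\circ T^{-1}$ is a Nijenhuis operator on the Lie conformal algebra $A$. The commutation relations are automatic: $N\circ T=T_{1}\circ T^{-1}\circ T=T_{1}=T\circ(T^{-1}\circ T_{1})=T\circ S$, and therefore $N\circ T_{1}=(N\circ T)\circ S=T\circ S\circ S=T_{1}\circ S$.

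Next I would extract two key intertwining identities on $V$ which handle the remaining verifications simultaneously. Expanding the compatibility equation \eqref{eq:lie56} with $T_{1}=T\circ S$ and applying the $\huaO$-operator identity for $T$ to each bracket on the left, then cancelling $T$ using injectivity, yields
\begin{equation}\label{eq:prop-star}
\rho(T(u))_{\lambda}S(v)-S(\rho(T(u))_{\lambda}v)=\rho(T(v))_{-\lambda-\partial}S(u)-S(\rho(T(v))_{-\lambda-\partial}u).
\end{equation}
Substituting this into the direct expansion of $[u_{\lambda}v]^{T}_{S}$ according to \eqref{eq:deformed S-bracket} collapses the extra terms, giving $[u_{\lambda}v]^{T}_{S}=[u_{\lambda}v]^{T_{1}}=[u_{\lambda}v]^{N\circ T}$, verifying \eqref{eq:lie49} for $(T,S,N)$. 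Similarly, equating the two expressions for $[T_{1}u_{\lambda}T_{1}v]=[T(Su)_{\lambda}T(Sv)]$ obtained from the $\huaO$-operator identities of $T_{1}$ and of $T$, and cancelling $T$, produces
\begin{equation}\label{eq:prop-dstar}
\rho(TS(u))_{\lambda}S(v)-S(\rho(TS(u))_{\lambda}v)=\rho(TS(v))_{-\lambda-\partial}S(u)-S(\rho(TS(v))_{-\lambda-\partial}u);
\end{equation}
since $T_{1}=T\circ S$, the analogous expansion of $[u_{\lambda}v]^{T_{1}}_{S}$ shows that \eqref{eq:prop-dstar} is exactly \eqref{eq:lie49} for $(T_{1},S,N)$.

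The heart of the argument, and the main obstacle, is deriving the Nijenhuis-structure compatibility \eqref{eq:lie conformal36} for $(N,S)$ from \eqref{eq:prop-star} and \eqref{eq:prop-dstar}. Given $a\in A$, I write $a=T(u)$ with $u=T^{-1}(a)$; then $N(a)=TS(u)$, so the left-hand side of \eqref{eq:lie conformal36} equals the left-hand side of \eqref{eq:prop-dstar}. Applying \eqref{eq:prop-star} with $v$ replaced by $S(v)$ rewrites the right-hand side of \eqref{eq:lie conformal36} as the right-hand side of \eqref{eq:prop-dstar}, so the two sides of \eqref{eq:lie conformal36} agree. Thus $(N,S)$ is a Nijenhuis structure on $(A;\rho)$, and since this pair is the same one that appears in both (i) and (ii), combining it with the two bracket identities already established completes the proof of both statements. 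Invertibility of $T$ is used essentially to license the cancellations leading to \eqref{eq:prop-star} and \eqref{eq:prop-dstar} and to write an arbitrary $a\in A$ as $T(u)$.
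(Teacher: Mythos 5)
Your proposal is correct and follows essentially the same route as the paper: Proposition \ref{pro:lie6.4} gives the Nijenhuis operator, your identity \eqref{eq:prop-star} is the paper's \eqref{eq:lie64} (obtained the same way, from compatibility plus the $\huaO$-identity of $T$ and cancellation of $T$), your \eqref{eq:prop-dstar} is the paper's \eqref{eq:LC66}, and the Nijenhuis-structure condition \eqref{eq:lie conformal36} is deduced exactly as in the paper by substituting $v\mapsto S(v)$ in \eqref{eq:prop-star} and combining with \eqref{eq:prop-dstar}. Your presentation is if anything slightly cleaner, since the two intertwining identities are isolated up front and reused for all the remaining verifications.
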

\begin{proof}
By Proposition \ref{pro:lie6.4}, $N=T_{1}\circ T^{-1}$ is a Nijenhuis operator on $A$.

Since $T$ and $T_{1}$ are compatible, we have
\begin{eqnarray*} [T(u)_{\lambda}T_{1}(v)]+[T_{1}(u)_{\lambda}T(v)]&=&T(\rho(T_{1}(u))_{\lambda}(v)-\rho(T_{1}(v))_{-\lambda-\partial}(u))\\
&&+T_{1}(\rho(T(u))_{\lambda}(v)-\rho(T(v))_{-\lambda-\partial}(u)).
\end{eqnarray*}
Because of $T_{1}=T\circ S$,  we get
\begin{equation}\label{eq:lie62}
\begin{split}
  [T(u)_{\lambda}TS(v)]+[T S(u)_{\lambda}T(v)]=& T(\rho(T S(u))_{\lambda}(v)-\rho(T S(v))_{-\lambda-\partial}(u)) \\
    &+ T S(\rho(T(u))_{\lambda}(v)-\rho(T(v))_{-\lambda-\partial}(u)).
\end{split}
\end{equation}
Since $T$ is an $\huaO$-operator, we have
\begin{equation}\label{eq:lie63}
  \begin{split}
    [T(u)_{\lambda}T S(v)]+[T S(u)_{\lambda}T(v)]= &T(\rho(T(u))_{\lambda}(S(v))-\rho(T S(v))_{-\lambda-\partial}(u) \\
      &+T(\rho(T S(u))_{\lambda}(v)-\rho(T(v))_{-\lambda-\partial}(S(u)).
  \end{split}
\end{equation}
By \eqref{eq:lie62}, \eqref{eq:lie63} and the fact that $T$ is invertible, we have
\begin{equation}\label{eq:lie64}
  S(\rho(T(u))_{\lambda}(v)-\rho(T(v))_{-\lambda-\partial}(u))=\rho(T(u))_{\lambda}(S(v))-\rho(T(v))_{-\lambda-\partial}(S(u)).
\end{equation}
Replacing $v$ with $S(v)$, we get
\begin{equation}\label{eq:lie65}
  S(\rho(T(u))_{\lambda}(S(v))-\rho(T\circ S(v))_{-\lambda-\partial}(u))=\rho(T(u))_{\lambda}(S^{2}(v))-\rho(T\circ S(v))_{-\lambda-\partial}(S(u)).
\end{equation}
Since $T$ and $T\circ S$ are $\huaO$-operators, we have
$$ T S[u_{\lambda}v]^{T\circ S}=[T S(u)_{\lambda}T S(v)]=T[S(u)_{\lambda}S(v)]^{T}.$$
By the fact that $T$ is invertible, we have
$$ S[u_{\lambda}v]^{T\circ S}=[S(u)_{\lambda}S(v)]^{T}.$$
which implies that
\begin{equation}\label{eq:LC66}
  S(\rho((T S)(u))_{\lambda}(v)-\rho(T S(v))_{-\lambda-\partial}(u))=\rho((T S)(u))_{\lambda}(S(v))-\rho((T S)(v))_{-\lambda-\partial}(S(u)).
\end{equation}
By \eqref{eq:lie65} and \eqref{eq:LC66}, we have
$$ S(\rho(T(u))_{\lambda}(S(v)))-\rho(T(u))_{\lambda}(S^{2}(v))-S(\rho((T\circ S)(u))_{\lambda}(v))+\rho((T\circ S)(u))_{\lambda}(S(v))=0.$$
By the fact that $N\circ T=(T_{1}\circ T^{-1})\circ T=T_{1}=T\circ S$, we have
$$ S(\rho(T(u))_{\lambda}(S(v)))-\rho(T(u))_{\lambda}(S^{2}(v))-S(\rho(N\circ T)(u))_{\lambda}(v))+\rho((N\circ T)(u))_{\lambda}(S(v))=0.$$
Since $T$ is invertible and let $a=T(u)$, we have
$$  S(\rho(a)_{\lambda}(S(v)))-\rho(a)_{\lambda}(S^{2}(v))-S(\rho(N(a))_{\lambda}(v))+\rho(N(a))_{\lambda}(S(v))=0.$$
Thus $(N,S)$ is a Nijenhuis structure on an \LP pair $(A;\rho)$.

(i) It is obvious that $N\circ T=T\circ S$. By \eqref{eq:lie64}, we have
\begin{equation*}
  \begin{split}
     [u_{\lambda}v]_{S}^{T}-[u_{\lambda}v]^{T\circ S}&=\rho(T(u))_{\lambda}(S(v))-\rho(T(v))_{-\lambda-\partial}(S(u))
     -S(\rho(T(u))_{\lambda}(v)-\rho(T(v))_{-\lambda-\partial}(u)) \\
       &=0,
   \end{split}
\end{equation*}
which implies that $[u_{\lambda}v]_{S}^{T}=[u_{\lambda}v]^{T\circ S}$. Therefore, $(T,S=T^{-1}\circ T_1,N=T_1\circ T^{-1})$ is an $\GRBN$-structure on the \LP pair $(A,[\cdot_{\lambda}\cdot];\rho)$.

(ii) It is obvious that $T_{1}\circ S=N\circ T_{1}$. By \eqref{eq:LC66}, we have
\begin{equation*}
  \begin{split}
    &[u_{\lambda}v]_{S}^{T_{1}}-[u_{\lambda}v]^{T_{1}\circ S} \\
  = &\rho(T_{1}(u))_{\lambda}(S(v))-\rho(T_{1}(v))_{-\lambda-\partial}(S(u))-S(\rho(T_{1}(u))_{\lambda}(v)-\rho(T_{1}(v))_{-\lambda-\partial}(u)) \\
  = &\rho((T\circ S)(u))_{\lambda}(S(v))-\rho((T\circ S)(v))_{-\lambda-\partial}(S(u))-S(\rho((T\circ S)(u))_{\lambda}(v)-\rho((T\circ S)(v))_{-\lambda-\partial}(u))\\
  =&0,
   \end{split}
\end{equation*}
which implies that $[u_{\lambda}v]_{S}^{T_{1}}=[u_{\lambda}v]^{T_{1}\circ S}$. Thus $(T_1,S=T^{-1}\circ T_1,N=T_1\circ T^{-1})$ is an $\GRBN$-structure on the \LP pair $(A,[\cdot_{\lambda}\cdot];\rho)$.
\end{proof}

\begin{pro}\label{pro:GRBN-invertible}
 Let $(T,N,S)$ be an $\GRBN$-structure on an \LP pair $(A;\rho)$. If $T$ is invertible, then $(T,N^k,S^k)$ and $(T_k=N^k\circ T,N^k,S^k)$ are $\GRBN$-structures for all $k\in\Nat$.
\end{pro}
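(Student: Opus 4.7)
My plan is to deduce this directly from the machinery already established, using Proposition \ref{pro:compatible O2} together with Theorem \ref{thm:ON construction}, so essentially no new computation is needed.

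First, I would observe that by Proposition \ref{pro:compatible O2}, every $T_k = N^k \circ T$ is an $\huaO$-operator on the \LP pair $(A;\rho)$, and moreover $T = T_0$ and $T_k$ are compatible $\huaO$-operators. Since $T$ is invertible by hypothesis, this sets up exactly the situation to which Theorem \ref{thm:ON construction} applies, with ``$T_1$'' taken to be $T_k$.

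Next, I would compute the two operators that Theorem \ref{thm:ON construction} produces. From the defining relation $N \circ T = T \circ S$ of the original $\GRBN$-structure, a short induction gives $N^k \circ T = T \circ S^k$ for every $k \in \Nat$. Consequently,
\begin{equation*}
T^{-1} \circ T_k = T^{-1} \circ N^k \circ T = S^k, \qquad T_k \circ T^{-1} = N^k \circ T \circ T^{-1} = N^k.
\end{equation*}
Thus the Nijenhuis operator and module endomorphism produced by Theorem \ref{thm:ON construction} from the compatible pair $(T, T_k)$ are precisely $N^k$ and $S^k$.

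Finally, applying both parts of Theorem \ref{thm:ON construction} I conclude at once that $(T, S^k, N^k)$ and $(T_k, S^k, N^k)$ are $\GRBN$-structures on $(A,[\cdot_{\lambda}\cdot];\rho)$, which is the desired statement. I do not anticipate any serious obstacle: the only thing that must be verified by hand is the identity $N^k \circ T = T \circ S^k$, which is an immediate induction from $N \circ T = T \circ S$, and the consistency of $(N^k, S^k)$ as a Nijenhuis structure is in any case already covered by Corollary \ref{cor:Hirac-Nijenhuis copair}.
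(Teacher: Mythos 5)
Your proposal is correct and follows essentially the same route as the paper: invoke Proposition \ref{pro:compatible O2} to get that $T$ and $T_k=N^k\circ T$ are compatible $\GRB$-operators, then apply Theorem \ref{thm:ON construction} using the invertibility of $T$. Your extra observation that $T^{-1}\circ T_k=S^k$ and $T_k\circ T^{-1}=N^k$ (via the induction $N^k\circ T=T\circ S^k$) is a worthwhile detail the paper leaves implicit, since it is what identifies the resulting structures as $(T,N^k,S^k)$ and $(T_k,N^k,S^k)$ specifically.
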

\begin{proof}
Since $(T,N,S)$ is an $\GRBN$-structure on the \LP pair $(A;\rho)$, by Proposition \ref{pro:compatible O2},  $T$ and $T_k=N^k\circ T$ are compatible $\GRB$-operators.  Then by the condition that $T$ is invertible and Theorem \ref{thm:ON construction}, the conclusion follows.
\end{proof}

\section{Conformal $r$-matrix-Nijenhuis structures and symplectic-Nijenhuis structures on Lie conformal algebras}\label{sec:rn}
In this section, we introduce the notions of conformal $r$-matrix-Nijenhuis structure and symplectic-Nijenhuis structure on a Lie conformal algebra. Furthermore, we study their connections with $\GRBN$-structures on \LP pairs.

\subsection{Conformal $r$-matrix-Nijenhuis structures on Lie conformal algebras}
Let $V$ be free and of finite rank as a $\Comp[\partial]$-module. Then $V\cong {({V^*}^c)^*}^c$ through the $\Comp[\partial]$-module homomorphism $v\mapsto v^{**},$ where $v^{**}$ is given by
$$v^{**}_\lambda(\alpha)=\alpha_{-\lambda}(v),\quad\forall~ \alpha \in{V^*}^c .$$

Suppose $A$ is a finite Lie conformal algebra which is free as a $\Comp[\partial]$-module. Then we have
$$ A\otimes A\cong A^{\ast c \ast c}\otimes A \cong {\rm Chom}(A^{\ast c},A).$$
Define
$$ \langle\alpha,a\rangle_{\lambda}= \alpha_{\lambda}(a), \quad and \quad\langle\alpha\otimes \beta, a\otimes b\rangle_{(\lambda,\mu)}=\langle\alpha,a\rangle_{\lambda}\langle\beta,b\rangle_{\mu},$$
where $a,b\in A$ and $\alpha,\beta\in A^{* c}$. Then for any $r\in A\otimes A$ with $r=\sum_{i} a_i\otimes b_i\in A\otimes A$, we associate a conformal linear map $r^{\sharp}\in {\rm Chom}(A^{\ast c},A)$ as follows:
\begin{equation}
  r^{\sharp}_{\lambda}(\alpha)=\sum_i\langle\alpha,a_i\rangle_{-\lambda-\partial}b_i,\quad \forall~\alpha\in A^{\ast c},
\end{equation}
which is equivalent to
\begin{equation}\label{eq:lie66}
  \langle\beta,r^{\sharp}_{-\mu-\partial}(\alpha)\rangle_{\lambda}=\langle \alpha\otimes \beta,r\rangle_{(\mu,\lambda)}, \quad \forall~\alpha,\beta\in A^{\ast c}.
\end{equation}

Set $r^{21}=\sum_i b_i\otimes a_i$. We say $r$ is {\bf skew-symmetric} if $r=-r^{21}$.

We say that $r$ is {\bf non-degenerate} if $r_{\lambda}^{\sharp}=r^{\sharp}_{\lambda}|_{\lambda=0}$ and $r_{0}^{\sharp}$ is a $\Comp[\partial]$-module isomorphism from $A^{\ast c}$ to $A$. In this case, \eqref{eq:lie66} becomes
\begin{equation}\label{eq:non-degenerate r-matrix}
\langle\beta,r_{0}^{\sharp}(\alpha)\rangle_{\lambda}=\langle\alpha\otimes \beta,r\rangle_{(-\lambda,\lambda)} \quad\forall~ \alpha,\beta\in A^{\ast c}.
\end{equation}

\emptycomment{Set $r=\sum_{i} a_i\otimes b_i\in A\otimes A$. Then we have
\begin{equation}
  r^{\sharp}_{\lambda}(\alpha)=\sum_i\{\alpha,a_i\}_{-\lambda-\partial}b_i,\quad \forall~\alpha\in A^{\ast c}.
\end{equation}}

\begin{defi}{\rm(\cite{Lib})}
Let $A$ be a Lie conformal algebra. An element $r\in A\otimes A$ is called a {\bf conformal $r$-matrix } if $\llbracket r,r\rrbracket\equiv0~{\rm mod} ~\partial^{\otimes^{3}}$ in $A\otimes A\otimes A$, where $\partial^{\otimes^{3}}=\partial\otimes1\otimes1+1\otimes\partial\otimes1+1\otimes1\otimes\partial.$
\end{defi}

There is a close relationship between conformal $r$-matrices and $\huaO$-operators.

\begin{lem}{\rm(\cite{HB})}\label{lem:equivalence r-matrix}
Let $A$ be a finite Lie conformal algebra. Then, $r$ is a skew-symmetric conformal $r$-matrix if and only if $r^{\sharp}_{0}=r^{\sharp}_{\lambda}|_{\lambda=0}$ is an $\huaO$-operator on the \LP pair $(A;\ad^*)$.
\end{lem}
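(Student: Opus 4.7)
The plan is to dualize both sides via the non-degenerate pairing induced by $\langle\cdot,\cdot\rangle\colon A^{\ast c}\otimes A\to\Comp[\lambda]$, which extends to $(A^{\ast c})^{\otimes 3}\otimes A^{\otimes 3}$ by
\begin{equation*}
\langle\alpha_1\otimes\alpha_2\otimes\alpha_3,\,a_1\otimes a_2\otimes a_3\rangle_{(\lambda_1,\lambda_2,\lambda_3)}=\langle\alpha_1,a_1\rangle_{\lambda_1}\langle\alpha_2,a_2\rangle_{\lambda_2}\langle\alpha_3,a_3\rangle_{\lambda_3}.
\end{equation*}
Quotienting by $\partial^{\otimes 3}$ corresponds precisely to imposing $\lambda_1+\lambda_2+\lambda_3+\partial\equiv 0$, so that we may freely substitute $\lambda_3=-\lambda_1-\lambda_2-\partial$ after pairing. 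Since $A$ is finite, the pairing is non-degenerate on each factor; hence the paired identity determines the original one, and the equivalence will come down to a term-by-term comparison.

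First I would record the two equivalent formulas for $r^\sharp_\lambda$ that are available when $r=\sum_i a_i\otimes b_i=-\sum_i b_i\otimes a_i$ is skew-symmetric, namely
\begin{equation*}
r^\sharp_\lambda(\alpha)=\sum_i\langle\alpha,a_i\rangle_{-\lambda-\partial}b_i=-\sum_i\langle\alpha,b_i\rangle_{-\lambda-\partial}a_i,
\end{equation*}
together with the coadjoint formula $\langle\ad^*(x)_\lambda\alpha,v\rangle_\mu=-\langle\alpha,[x_\lambda v]\rangle_{\mu-\lambda}$. Next I would pair each of the three summands in $\llbracket r,r\rrbracket$ against a test element $\alpha\otimes\beta\otimes\gamma\in(A^{\ast c})^{\otimes 3}$, treating the substitutions $\mu=1\otimes\partial\otimes 1$ etc.\ after the pairing has been carried out. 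Using the dual expression above in the right tensor slot for each summand, the first term becomes a scalar of the form $\langle\gamma,[r^\sharp_0(\alpha)_\lambda r^\sharp_0(\beta)]\rangle$, while the second and third convert into $\langle\gamma,r^\sharp_0(\ad^*(r^\sharp_0(\alpha))_\lambda\beta)\rangle$ and $\langle\gamma,r^\sharp_0(\ad^*(r^\sharp_0(\beta))_{-\lambda-\partial}\alpha)\rangle$ respectively. After using $\lambda_3=-\lambda_1-\lambda_2-\partial$ to eliminate $\lambda_3$, the vanishing of $\llbracket r,r\rrbracket$ mod $\partial^{\otimes 3}$ reads
\begin{equation*}
\langle\gamma,\,[r^\sharp_0(\alpha)_\lambda r^\sharp_0(\beta)]-r^\sharp_0\bigl(\ad^*(r^\sharp_0(\alpha))_\lambda\beta-\ad^*(r^\sharp_0(\beta))_{-\lambda-\partial}\alpha\bigr)\rangle_\mu=0
\end{equation*}
for all $\gamma$, and non-degeneracy gives the $\huaO$-operator identity on $(A;\ad^*)$. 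Running the computation in reverse establishes the converse.

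The main obstacle will be the combinatorial bookkeeping: keeping track of the three formal parameters $\lambda_1,\lambda_2,\lambda_3$, the derivation $\partial$ acting on each of the three tensor factors, and the three independent specializations of $\mu$ that appear in the definition of $\llbracket r,r\rrbracket$. The crux is to invoke skew-symmetry in precisely the correct tensor slot so that each of the three summands in the CYBE matches exactly one of the three summands of the $\huaO$-operator identity (the bracket on the left and the two $\ad^*$-terms on the right). Once this matching is set up cleanly, both directions reduce to the same scalar identity, and nondegeneracy of the pairing closes the argument.
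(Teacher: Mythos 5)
The paper offers no proof of this lemma at all: it is imported from \cite{HB} with a citation, so there is no in-paper argument to compare against line by line. Your strategy --- pair $\llbracket r,r\rrbracket$ against a test element $\alpha\otimes\beta\otimes\gamma$, use $\lambda_1+\lambda_2+\lambda_3=0$ to descend to the quotient by $\partial^{\otimes 3}$, rewrite each summand via skew-symmetry of $r$ and the identity $\langle\ad^*(x)_\lambda\alpha,v\rangle_\mu=-\langle\alpha,[x_\lambda v]\rangle_{\mu-\lambda}$, and conclude by non-degeneracy --- is exactly the standard argument for this equivalence and it does go through; I checked that the three summands do recombine into the paired $\huaO$-operator identity with $\lambda=\lambda_1$, $\mu=\lambda_3$.

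Two corrections for the write-up. First, your stated term-by-term matching is permuted. If every summand is to be expressed as $\langle\gamma,\cdot\rangle_{\lambda_3}$ (so that non-degeneracy is applied uniformly in the last slot), then it is the \emph{third} summand $-a_i\otimes a_j\otimes[b_{j\mu}b_i]$ that yields $\langle\gamma,[r^\sharp_0(\alpha)_{\lambda_1}r^\sharp_0(\beta)]\rangle_{\lambda_3}$, while the first summand $[a_{i\mu}a_j]\otimes b_i\otimes b_j$ yields $\langle\gamma,r^\sharp_0(\ad^*(r^\sharp_0(\beta))_{-\lambda_1-\partial}\alpha)\rangle_{\lambda_3}$: one must first convert $\langle\alpha,[a_{i\lambda_2}a_j]\rangle_{\lambda_1}=-\langle\ad^*(a_i)_{\lambda_2}\alpha,a_j\rangle_{\lambda_1+\lambda_2}$ and only then collect $\sum_j\langle\cdot,a_j\rangle b_j$ into $r^\sharp_0$ in the third slot, using along the way the specialization $\ad^*(r^\sharp_{-\lambda_2-\partial}(\beta))_{\lambda_2}=\ad^*(r^\sharp_0(\beta))_{\lambda_2}$ (this is precisely where $r^\sharp_0$, rather than a general $r^\sharp_\lambda$, enters, and it deserves an explicit line). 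Your matching of the first summand to the bracket term would put $\alpha$, not $\gamma$, in the outer slot and requires a relabeling you have not performed. Second, non-degeneracy of the pairing and the identification of $A^{\otimes 3}$ modulo $\partial^{\otimes 3}$ with its restriction to the locus $\lambda_1+\lambda_2+\lambda_3=0$ require $A$ to be \emph{free} of finite rank over $\Comp[\partial]$, not merely finite (torsion elements are annihilated by every conformal linear functional); this is the standing hypothesis of the surrounding section and should be invoked explicitly. Neither point invalidates the approach, but both must be repaired before the sketch becomes a proof.
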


\begin{defi}
Let $r$ be a skew-symmetric conformal $r$-matrix and $N:A\rightarrow A$ a Nijenhuis operator on a finite Lie conformal algebra A. A pair $(r,N)$ is called a {\bf conformal $r$-matrix-Nijenhuis structure} on the Lie conformal algebra $A$  if for all $\alpha, \beta\in A^{\ast c}$, the following conditions are satisfied:
\begin{eqnarray}
% \nonumber to remove numbering (before each equation)
 {N\circ r_\lambda^{\sharp} }&=&{r_\lambda^{\sharp}\circ N^{\ast}},\\
  {[\alpha_{\lambda}\beta]^{N\circ r_0^{\sharp}}} &=& {[\alpha_{\lambda}\beta]_{N^{\ast}}^{r_0^{\sharp}}}.
\end{eqnarray}
\end{defi}
There is a close relationship between conformal $r$-matrix-Nijenhuis structures and $\GRBN$-structures on Lie conformal algebras.

\begin{thm}\label{thm:rNijRB}
Let $r$ be a skew-symmetric conformal $r$-matrix and $N:A\rightarrow A$ a Nijenhuis operator on a Lie conformal algebra $A$. If $(r,N)$ is a conformal $r$-matrix-Nijenhuis structure on $A$, then $(r_0^{\sharp},N,S=N^{\ast})$ is an $\GRBN$-structure on the \LP pair $(A;\ad^{\ast})$.
\end{thm}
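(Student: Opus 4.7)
The proof is essentially an assembly of results already established in the paper, so the plan is to verify each of the four ingredients required for the triple $(r_0^{\sharp}, N, S=N^{\ast})$ to be an $\GRBN$-structure on $(A;\ad^{\ast})$.

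First I would invoke Lemma~\ref{lem:equivalence r-matrix} to conclude that, since $r$ is a skew-symmetric conformal $r$-matrix on the finite Lie conformal algebra $A$, the map $T := r_0^{\sharp}$ is an $\huaO$-operator on the \LP pair $(A;\ad^{\ast})$. Next, I would apply Example~\ref{ex:cNP}: since $N$ is a Nijenhuis operator on $A$, the pair $(N, N^{\ast})$ is automatically a Nijenhuis structure on the \LP pair $(A;\ad^{\ast})$, i.e.\ identity \eqref{eq:lie conformal36} holds for $\rho = \ad^{\ast}$ and $S = N^{\ast}$. This handles the two ``non-compatibility'' pieces of the definition of an $\GRBN$-structure for free.

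It then remains to verify the two compatibility equations \eqref{eq:TS commute} and \eqref{eq:lie49} between $T$ and $(N,S)$. For \eqref{eq:TS commute}, I would specialize the hypothesis $N \circ r_\lambda^{\sharp} = r_\lambda^{\sharp} \circ N^{\ast}$ at $\lambda = 0$ to obtain exactly $N \circ T = T \circ S$. For \eqref{eq:lie49}, I would unwind the definition of the two brackets on $A^{\ast c}$: by \eqref{eq:lie conformal 45} applied to the \LP pair $(A;\ad^{\ast})$ and the operator $N\circ T$, the bracket $[\alpha_\lambda\beta]^{N\circ T}$ on $A^{\ast c}$ coincides with $[\alpha_\lambda\beta]^{N\circ r_0^{\sharp}}$; and by \eqref{eq:deformed S-bracket} with $S = N^{\ast}$, the deformed bracket $[\alpha_\lambda\beta]^{T}_{S}$ equals $[\alpha_\lambda\beta]^{r_0^{\sharp}}_{N^{\ast}}$. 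Hence the defining identity $[\alpha_\lambda\beta]^{N\circ r_0^{\sharp}} = [\alpha_\lambda\beta]^{r_0^{\sharp}}_{N^{\ast}}$ of the conformal $r$-matrix-Nijenhuis structure is precisely \eqref{eq:lie49}.

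The only point that requires some care, and which I expect to be the main (mild) obstacle, is making sure that the bracket constructions $[\cdot_\lambda \cdot]^{T}$ and $[\cdot_\lambda \cdot]^{T}_{S}$, which in Section~\ref{sec:ON-structures} are defined for a generic \LP pair $(A,[\cdot_\lambda\cdot];\rho)$ via \eqref{eq:lie conformal 45} and \eqref{eq:deformed S-bracket}, specialize correctly to the coadjoint module $\rho = \ad^{\ast}$ when $T = r_0^{\sharp}$ and $S = N^{\ast}$; in particular, that the expression $\ad^{\ast}(r_0^{\sharp}(\alpha))_\lambda(\beta) - \ad^{\ast}(r_0^{\sharp}(\beta))_{-\lambda-\partial}(\alpha)$ produced by \eqref{eq:lie conformal 45} matches the bracket $[\alpha_\lambda\beta]^{r_0^{\sharp}}$ used in the definition of the conformal $r$-matrix-Nijenhuis structure. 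Once this matching is fixed, the theorem follows by directly comparing the two definitions.
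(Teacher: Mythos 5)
Your proposal is correct and matches the paper's intent: the paper's own proof is just ``It is straightforward,'' and your argument supplies exactly the natural details---Lemma~\ref{lem:equivalence r-matrix} for the $\GRB$-operator, Example~\ref{ex:cNP} for the Nijenhuis structure $(N,N^{\ast})$, evaluation of $N\circ r_\lambda^{\sharp}=r_\lambda^{\sharp}\circ N^{\ast}$ at $\lambda=0$ for \eqref{eq:TS commute}, and the observation that the second defining identity of a conformal $r$-matrix-Nijenhuis structure is literally \eqref{eq:lie49} for $\rho=\ad^{\ast}$. No gaps.
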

\begin{proof}
  It is straightforward.
\end{proof}

The following proposition shows that if the conformal $r$-matrix $r$ is determined by $r_{0}^{\sharp}$, the conformal $r$-matrix-Nijenhuis structure is equivalent to the $\GRBN$-structure on the \LP pair $(A;\ad^{\ast})$.
\begin{pro}\label{pro:non-degenerate}
  Let $r$ be a skew-symmetric conformal $r$-matrix with $r_{\lambda}^{\sharp}=r_{0}^{\sharp}$  and $N:A\rightarrow A$ a Nijenhuis operator on a Lie conformal algebra $A$. Then $(r,N)$ is a conformal $r$-matrix-Nijenhuis structure on $A$ if and only if $(r_0^{\sharp},N,S=N^{\ast})$ is an $\GRBN$-structure on the \LP pair $(A;\ad^{\ast})$.
\end{pro}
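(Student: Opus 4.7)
The plan is to show that the proposition follows almost directly by unpacking both definitions side by side and invoking Theorem \ref{thm:rNijRB} and Lemma \ref{lem:equivalence r-matrix}. The forward implication is nothing but a restatement of Theorem \ref{thm:rNijRB} (since that theorem holds without needing the extra hypothesis $r_\lambda^\sharp=r_0^\sharp$), so only the reverse direction needs work, and even it should reduce to a definitional translation once the hypothesis $r_\lambda^\sharp = r_0^\sharp$ is used to eliminate $\lambda$-dependence in the compatibility condition.

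For the reverse direction, I would assume that $(r_0^\sharp,N,S=N^{\ast})$ is an $\GRBN$-structure on $(A;\ad^{\ast})$ and then check, one by one, the clauses of the $r$-matrix-Nijenhuis definition. First, the $\GRBN$-structure forces $r_0^\sharp$ to be an $\huaO$-operator on $(A;\ad^{\ast})$; since $r$ is assumed skew-symmetric, Lemma \ref{lem:equivalence r-matrix} then gives that $r$ is a skew-symmetric conformal $r$-matrix. Second, $N$ is a Nijenhuis operator on $A$ because $(N,N^{\ast})$ is a Nijenhuis structure on $(A;\ad^{\ast})$ (this is in the definition of Nijenhuis structure, and is also reflected by Example \ref{ex:cNP}).

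The two compatibility conditions are then handled by the hypothesis $r_\lambda^\sharp=r_0^\sharp$: the condition $N\circ r_\lambda^\sharp = r_\lambda^\sharp\circ N^{\ast}$ from the $r$-matrix-Nijenhuis definition becomes $N\circ r_0^\sharp = r_0^\sharp\circ N^{\ast}$, which is precisely the first compatibility axiom $N\circ T=T\circ S$ of the $\GRBN$-structure with $T=r_0^\sharp$, $S=N^{\ast}$; and the bracket identity $[\alpha_\lambda\beta]^{N\circ r_0^\sharp}=[\alpha_\lambda\beta]_{N^{\ast}}^{r_0^\sharp}$ is literally the second axiom of the $\GRBN$-structure.

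There is no genuine obstacle here: the result is essentially a tautological equivalence, and the hypothesis $r_\lambda^\sharp=r_0^\sharp$ is exactly what is needed to match the single compatibility condition of the $\GRBN$-structure with the a priori $\lambda$-parametrized family appearing in the $r$-matrix-Nijenhuis definition. The only care to take is to make sure the skew-symmetry hypothesis on $r$ is invoked when applying Lemma \ref{lem:equivalence r-matrix} in the reverse direction, since that lemma provides the bridge from the $\huaO$-operator formulation back to the conformal classical Yang-Baxter equation.
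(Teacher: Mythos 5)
Your proposal is correct and follows essentially the same route as the paper: the paper's own proof simply observes that under the hypothesis $r_{\lambda}^{\sharp}=r_{0}^{\sharp}$ the condition $N\circ r_\lambda^{\sharp}=r_\lambda^{\sharp}\circ N^{\ast}$ is equivalent to $N\circ r_0^{\sharp}=r_0^{\sharp}\circ N^{\ast}$, and declares the rest (the identification of the remaining clauses via Lemma \ref{lem:equivalence r-matrix}, Example \ref{ex:cNP}, and the literal coincidence of the two bracket identities) straightforward. The only cosmetic remark is that in your reverse direction you re-derive that $r$ is a skew-symmetric conformal $r$-matrix, which is already a standing hypothesis of the proposition, so that step is redundant but harmless.
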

\begin{proof}
  Since  $r_{\lambda}^{\sharp}=r_{0}^{\sharp}$,  ${N\circ r_\lambda^{\sharp} }={r_\lambda^{\sharp}\circ N^{\ast}}$ holds if and only if ${N\circ r_0^{\sharp} }={r_0^{\sharp}\circ N^{\ast}}$ holds. The rest is straightforward.
\end{proof}

\begin{pro}\label{pro:compatible r-matrix}
 Assume that $(r,N)$ is skew-symmetric conformal $r$-matrix-Nijenhuis structure on $A$ and set $r=\sum_{i} a_i\otimes b_i$.   Then $r_N=\sum_{i} a_i\otimes N(b_i)$ is skew-symmetric. Furthermore,  $r$ and $r_{N}$ are compatible conformal $r$-matrices in the sense that any linear combination of $r$ and $r_{N}$ is still a conformal $r$-matrix.
\end{pro}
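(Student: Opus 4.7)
The plan is to translate both assertions through the identification $A\otimes A \cong {\rm Chom}(A^{*c},A)$ (which requires $A$ to be finite and free as a $\Comp[\partial]$-module, as in the setting of Lemma \ref{lem:equivalence r-matrix}), and then to invoke the $\GRBN$-machinery developed in Section \ref{sec:ON-structures}. The crucial bookkeeping step is to identify $r_N$ with both $(1\otimes N)(r)$ and $(N\otimes 1)(r)$ as elements of $A\otimes A$; after that, skew-symmetry and compatibility fall out of results already proved.

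First, I would check the two identities
\begin{equation*}
(r_N)^{\sharp}_{\lambda}(\alpha) = N\bigl(r^{\sharp}_{\lambda}(\alpha)\bigr), \qquad r^{\sharp}_{\lambda}\bigl(N^{*}(\alpha)\bigr) = \sum_i \langle\alpha, N(a_i)\rangle_{-\lambda-\partial}\, b_i,
\end{equation*}
the first using that $N$ is $\Comp[\partial]$-linear, the second using the definition $N^{*}(\alpha)_{\mu}(a) = \alpha_{\mu}(N(a))$. The compatibility condition $N\circ r^{\sharp}_{\lambda} = r^{\sharp}_{\lambda}\circ N^{*}$ from the conformal $r$-matrix-Nijenhuis structure then becomes the equality of conformal linear maps $\bigl((1\otimes N)(r)\bigr)^{\sharp} = \bigl((N\otimes 1)(r)\bigr)^{\sharp}$. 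Injectivity of the $\sharp$-construction yields the element-wise identity $(1\otimes N)(r) = (N\otimes 1)(r) = r_N$ in $A\otimes A$. From this, skew-symmetry of $r_N$ is immediate: using $r^{21}=-r$,
\begin{equation*}
(r_N)^{21} = \bigl((N\otimes 1)(r)\bigr)^{21} = (1\otimes N)(r^{21}) = -(1\otimes N)(r) = -r_N.
\end{equation*}

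For the compatibility assertion, I would apply Theorem \ref{thm:rNijRB} to conclude that $(r_0^{\sharp}, N, N^{*})$ is an $\GRBN$-structure on the \LP pair $(A;\ad^{*})$. Proposition \ref{pro:compatible O} then produces two compatible $\huaO$-operators $r_0^{\sharp}$ and $r_0^{\sharp}\circ N^{*} = N\circ r_0^{\sharp} = (r_N)_0^{\sharp}$, so that any linear combination $k_1 r_0^{\sharp} + k_2 (r_N)_0^{\sharp} = (k_1 r + k_2 r_N)_0^{\sharp}$ is an $\huaO$-operator on $(A;\ad^{*})$. Since $k_1 r + k_2 r_N$ is manifestly skew-symmetric, Lemma \ref{lem:equivalence r-matrix} concludes that $k_1 r + k_2 r_N$ is a conformal $r$-matrix, which is precisely the compatibility claim.

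The only mildly delicate point I anticipate is the passage from equality of $\sharp$-maps to equality in $A\otimes A$; this relies on $A$ being free and finitely generated over $\Comp[\partial]$, a standing assumption in the treatment of conformal $r$-matrices. Once that identification is in place, the argument is essentially a translation via Lemma \ref{lem:equivalence r-matrix} and Theorem \ref{thm:rNijRB} of compatibility results already proved for $\GRBN$-structures on \LP pairs.
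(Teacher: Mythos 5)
Your proof is correct and follows essentially the same route as the paper: the compatibility claim is handled identically (Theorem \ref{thm:rNijRB} plus Proposition \ref{pro:compatible O}, then Lemma \ref{lem:equivalence r-matrix} applied to the skew-symmetric combination $k_1r+k_2r_N$). For skew-symmetry you repackage the paper's pairing computation as the element-wise identity $(1\otimes N)(r)=(N\otimes 1)(r)$ obtained from $N\circ r^{\sharp}_\lambda=r^{\sharp}_\lambda\circ N^{*}$ and the identification $A\otimes A\cong{\rm Chom}(A^{*c},A)$, which is the same argument in slightly different clothing.
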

\begin{proof}
 By the fact that the Lie conformal algebra $A$ is finite,  $r$ is skew-symmetric if and only if
 $$\langle \beta\otimes \alpha,r\rangle_{(\mu,\lambda)}=-\langle \alpha\otimes \beta,r\rangle_{(\lambda,\mu)},\quad \forall~\alpha,\beta\in A^{*c},$$
which is also equivalent to
 $$\langle\alpha,r^{\sharp}_{-\mu-\partial}(\beta)\rangle_{\lambda}=-\langle\beta,r^{\sharp}_{-\lambda-\partial}(\alpha)\rangle_{\mu}.$$
Furthermore, by \eqref{eq:lie66} and  ${N\circ r_\lambda^{\sharp} }={r_\lambda^{\sharp}\circ N^{\ast}}$, we have
\begin{eqnarray*}
\langle\alpha\otimes \beta,r_N\rangle_{(\lambda,\mu)}&=& \langle\alpha\otimes N^*\beta,r\rangle_{(\lambda,\mu)} =\langle N^*\beta,r^{\sharp}_{-\mu-\partial}(\alpha)\rangle_{\lambda}\\
&=&-\langle\alpha,r^{\sharp}_{-\lambda-\partial}(N^*\beta)\rangle_{\mu}=-\langle\alpha,Nr^{\sharp}_{-\lambda-\partial}(\beta)\rangle_{\mu}\\
&=&-\langle N^*\alpha,r^{\sharp}_{-\lambda-\partial}(\beta)\rangle_{\mu}=-\langle\beta\otimes N^*\alpha,r\rangle_{(\lambda,\mu)}\\
&=&-\langle\beta\otimes \alpha,r_N\rangle_{(\mu,\lambda)},
\end{eqnarray*}
which implies that $r_N$ is skew-symmetric.

By Theorem \ref{thm:rNijRB} and Proposition \ref{pro:compatible O}, $(r_N^{\sharp})_{0}=(r_N^{\sharp})_{\lambda}|_{\lambda=0}$ is an $\huaO$-operator on the \LP pair $(A;\ad^*)$ and, furthermore, $r^\sharp_0$ and $(r_N^{\sharp})_{0}$ are compatible. By Lemma \ref{lem:equivalence r-matrix}, $r_{N}$ is  a conformal $r$-matrix. It is straightforward to check that the $\huaO$-operators $r^\sharp_0$ and $(r_N^{\sharp})_{0}$ are compatible if and only if the conformal $r$-matrices $r$ and $r_N$ are compatible. The conclusion follows.
\end{proof}

\begin{pro}\label{pro:compatible r-matrix2}
Let $(r,N)$ be a conformal $r$-matrix-Nijenhuis structure on a conformal algebra $A$ and and set $r=\sum_{i} a_i\otimes a_j$. Then for all $k\in \Nat$, $r_{N^k}=\sum_i a_i\otimes N^k (b_i)\in A\otimes A$ are pairwise compatible skew-symmetric conformal $r$-matrices.
\end{pro}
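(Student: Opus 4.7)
The plan is to reduce the assertion to the operator-level statement already proved in Proposition \ref{pro:compatible O2}, and then translate back to the $r$-matrix side using Lemma \ref{lem:equivalence r-matrix}.

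First, I would invoke Theorem \ref{thm:rNijRB} to convert the given conformal $r$-matrix-Nijenhuis structure $(r,N)$ into the $\GRBN$-structure $(r_0^{\sharp},N,N^{\ast})$ on the \LP pair $(A;\ad^{\ast})$. Proposition \ref{pro:compatible O2} then immediately supplies that all operators $T_k = N^k\circ r_0^{\sharp}$ ($k\in\Nat$) are $\huaO$-operators on $(A;\ad^{\ast})$ and that $T_k$ and $T_l$ are compatible for every $k,l\in\Nat$.

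Next I would identify $T_k$ with $(r_{N^k})_0^{\sharp}$. This is a direct calculation:
\begin{equation*}
(r_{N^k})_0^{\sharp}(\alpha)=\sum_i\langle\alpha,a_i\rangle_{-\partial}N^k(b_i)
=N^k\Big(\sum_i\langle\alpha,a_i\rangle_{-\partial}b_i\Big)=N^k\big(r_0^{\sharp}(\alpha)\big),
\end{equation*}
so $(r_{N^k})_0^{\sharp}=N^k\circ r_0^{\sharp}=T_k$.

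The remaining technical point is skew-symmetry of $r_{N^k}$. Iterating the compatibility condition $N\circ r_\lambda^{\sharp}=r_\lambda^{\sharp}\circ N^{\ast}$ yields $N^k\circ r_\lambda^{\sharp}=r_\lambda^{\sharp}\circ(N^{\ast})^k=r_\lambda^{\sharp}\circ(N^k)^{\ast}$. Mimicking the pairing computation of Proposition \ref{pro:compatible r-matrix}, I would compute
\begin{equation*}
\langle\alpha\otimes\beta,r_{N^k}\rangle_{(\lambda,\mu)}=\langle\alpha\otimes(N^k)^{\ast}\beta,r\rangle_{(\lambda,\mu)},
\end{equation*}
then use skew-symmetry of $r$ together with the iterated compatibility to move $N^k$ from one tensor slot to the other, arriving at $-\langle\beta\otimes\alpha,r_{N^k}\rangle_{(\mu,\lambda)}$. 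Thus each $r_{N^k}$ is skew-symmetric. Applying Lemma \ref{lem:equivalence r-matrix} to $(r_{N^k})_0^{\sharp}=T_k$ shows that each $r_{N^k}$ is a conformal $r$-matrix.

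Finally, for pairwise compatibility, any $\Comp$-linear combination $c_k r_{N^k}+c_l r_{N^l}$ is again skew-symmetric (by the argument above, applied entrywise), and its associated $\sharp$-operator at $\lambda=0$ equals $c_k T_k+c_l T_l$, which is an $\huaO$-operator on $(A;\ad^{\ast})$ by Proposition \ref{pro:compatible O2}. Lemma \ref{lem:equivalence r-matrix} then certifies that this linear combination is a conformal $r$-matrix, completing the proof. The only mildly delicate step is the skew-symmetry propagation in Step~3; once the iterated compatibility $N^k\circ r_\lambda^{\sharp}=r_\lambda^{\sharp}\circ(N^k)^{\ast}$ is in hand, everything else is a transparent application of the earlier results.
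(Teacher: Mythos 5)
Your proof is correct and follows exactly the route the paper takes: its own proof is a one-line reference to Theorem \ref{thm:rNijRB}, Proposition \ref{pro:compatible O2}, and the skew-symmetry argument of Proposition \ref{pro:compatible r-matrix}, all of which you reproduce (with the identification $(r_{N^k})_0^{\sharp}=N^k\circ r_0^{\sharp}$ and the iterated relation $N^k\circ r_\lambda^{\sharp}=r_\lambda^{\sharp}\circ (N^k)^{\ast}$ spelled out explicitly). No gaps.
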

\begin{proof}
 Similar to the proof of Proposition \ref{pro:compatible r-matrix}, the conclusion follows from Theorem \ref{thm:rNijRB} and Proposition \ref{pro:compatible O2}.
\end{proof}

\subsection{Symplectic-Nijenhuis structures on Lie conformal algebras}
Recall that a $2$-form on a Lie conformal algebra $A$ is a $\Comp$-bilinear map $\omega:A\otimes A\rightarrow\Comp[\lambda]$ satisfies the following conditions:
\begin{eqnarray}
\label{eq:symplectic1}  \{\partial a_\lambda b\}_\omega &=& -\lambda \{a_\lambda b\}_{\omega},\\
\label{eq:symplectic2}   \{ a_\lambda \partial b\}_\omega &=& \lambda \{a_\lambda b\}_{\omega},\\
 \label{eq:symplectic3} \{a_\lambda b\}_\omega &=& -\{b_\lambda a\}_\omega, \forall~a,b,c\in A.
\end{eqnarray}

A  $2$-form $\omega$ on a finite Lie conformal algebra $A$ is called {\bf non-degenerate} if  $\omega^\natural$ defined by
\begin{equation}
  \{a_\lambda b\}_\omega=\langle\omega^\natural(a),b\rangle_\lambda
\end{equation}
is a $\Comp[\partial]$-module isomorphism from $A$ to $A^{\ast c}$.

Recall that a $2$-cocycle on a Lie conformal algebra $A$ with coefficients in the trivial module is a $2$-form $\omega$ on $A$ satisfying $\dM^T\omega=0$, i.e.
\begin{eqnarray}
 \label{eq:symplectic4}\{a_\lambda[b_{\mu}c]\}_\omega-\{b_\mu [a_{\lambda}c]\}_\omega &=& \{[a_{\lambda}b]_{\lambda+\mu}c\}_\omega,\quad \forall~a,b,c\in A.
\end{eqnarray}

\begin{defi}{\rm(\cite{HL15})}
  A {\bf symplectic structure} on a Lie conformal algebra $(A,[\cdot_\lambda\cdot])$  is a non-degenerate $2$-cocycle $\omega$ on $A$.
\end{defi}

\emptycomment{
\begin{ex}
  Let $\omega\in\wedge^2\g^*$ be a symplectic structure on a Lie algebra $(\g,[\cdot,\cdot])$. Then $\tilde{\omega}$ defined by
  \begin{equation}\label{eq:sym-current}
  \{f(\partial)x_\lambda g(\partial)y\}_{\tilde{\omega}}=f(-\lambda)g(\lambda)\omega(x,y),\quad x,y\in\g,f(\partial),g(\partial)\in\Comp[\partial]
  \end{equation}
  is a symplectic structure on the current Lie conformal algebra ${\rm Cur}~\g=\Comp[\partial]\otimes\g$.
\end{ex}}

\begin{lem}{\rm(\cite{HB})}\label{lem:symplectic-r-matrix}
Let $A$ be a Lie conformal algebra. Then $r\in A\otimes A$ is a skew-symmetric and non-degenerate $r$-matrix if and only if the bilinear form $\omega$ defined by
\begin{equation}
  \{a_\lambda b\}_\omega=\langle(r_{0}^{\sharp})^{-1}(a),b\rangle_{\lambda}, \quad a,b\in A
\end{equation}
is a symplectic structure on  $A$, where $r_{0}^{\sharp}\in Chom(A^{\ast c},A)$ is given by \eqref{eq:non-degenerate r-matrix}.
\end{lem}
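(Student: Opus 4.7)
The plan is to prove both directions at once by establishing three equivalences: (i) $\omega$ is a well-defined $2$-form with skew-symmetry \eqref{eq:symplectic3} iff $r=-r^{21}$; (ii) $\omega$ is non-degenerate iff $r$ is; and (iii) $\omega$ satisfies the cocycle condition \eqref{eq:symplectic4} iff $r_0^\sharp$ is an $\huaO$-operator on $(A;\ad^*)$. With these in hand, Lemma \ref{lem:equivalence r-matrix} bridges condition (iii) to $r$ being a skew-symmetric conformal $r$-matrix. The computational device throughout will be the identity
$$\{a_\lambda b\}_\omega=\langle\alpha,r_0^\sharp(\beta)\rangle_\lambda=\langle\beta\otimes\alpha,r\rangle_{(-\lambda,\lambda)},$$
obtained by setting $\alpha=(r_0^\sharp)^{-1}(a)$ and $\beta=(r_0^\sharp)^{-1}(b)$ and invoking \eqref{eq:non-degenerate r-matrix}.

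First I would verify the conformal sesquilinearity \eqref{eq:symplectic1}--\eqref{eq:symplectic2}, which drops out from $\Comp[\partial]$-linearity of $(r_0^\sharp)^{-1}$ combined with $\langle\partial\alpha,b\rangle_\lambda=-\lambda\langle\alpha,b\rangle_\lambda$ and $\langle\alpha,\partial b\rangle_\lambda=\lambda\langle\alpha,b\rangle_\lambda$, the latter being the defining property of $A^{*c}$. The skew-symmetry \eqref{eq:symplectic3} is then immediately equivalent to $r=-r^{21}$ via the key identity displayed above. Non-degeneracy is even more direct: $\omega^\natural=(r_0^\sharp)^{-1}$, so $\omega$ is non-degenerate exactly when $r_0^\sharp$ is a $\Comp[\partial]$-module isomorphism, which is the definition of $r$ being non-degenerate.

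The substantive step is (iii). From the definition $(\ad^*(b)_\mu\alpha)_\nu c=-\alpha_{\nu-\mu}[b_\mu c]$ I would extract $\langle\alpha,[b_\mu c]\rangle_\lambda=-\langle\ad^*(b)_\mu\alpha,c\rangle_{\lambda+\mu}$ (and its analogue with $a,b,\alpha,\beta$ swapped). Substituting into \eqref{eq:symplectic4} rewrites the cocycle condition as
$$\langle\ad^*(a)_\lambda\beta-\ad^*(b)_\mu\alpha,c\rangle_{\lambda+\mu}=\langle(r_0^\sharp)^{-1}[a_\lambda b],c\rangle_{\lambda+\mu}$$
for all $c\in A$. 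On the other side, pairing the $\huaO$-operator identity for $r_0^\sharp$ with $c$ at parameter $\nu$ and expanding $\ad^*(b)_{-\lambda-\partial}\alpha$ binomially, using $(\partial\varphi)_\nu=-\nu\varphi_\nu$ on $A^{*c}$, yields $\langle\ad^*(b)_{-\lambda-\partial}\alpha,c\rangle_\nu=(\ad^*(b)_{\nu-\lambda}\alpha)_\nu c$; at the slice $\nu=\lambda+\mu$ the $\huaO$-operator identity therefore reduces to exactly the displayed cocycle formula. Since $\mu=\nu-\lambda$ is an invertible change of formal variables, the two conditions are equivalent.

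The main obstacle will be the bookkeeping in the third step, specifically checking that the formal substitution $\mu\leftrightarrow\nu-\lambda$ correctly intertwines the $-\lambda-\partial$ convention of the $\huaO$-operator identity with the two-parameter shape of the cocycle. Once the $\partial$-on-$A^{*c}$ interaction with the pairing is tracked, both directions of the lemma drop out together, and the remaining pieces are routine unpackings of definitions.
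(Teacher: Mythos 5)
The paper does not prove this lemma at all --- it is quoted from \cite{HB} with no argument given --- so there is no in-text proof to compare yours against. Your strategy is the natural one and is consistent with how the paper itself uses the result: your step (iii) is precisely the content of Corollary \ref{eq:symplectic-O}, which the paper derives \emph{from} this lemma together with Lemma \ref{lem:equivalence r-matrix}; you instead prove that equivalence directly and then invoke Lemma \ref{lem:equivalence r-matrix} to pass between the $\huaO$-operator condition and the conformal $r$-matrix condition, which is logically sound and not circular. Your bookkeeping in step (iii) is correct: $\langle\ad^*(b)_{-\lambda-\partial}\alpha,c\rangle_{\nu}=(\ad^*(b)_{\nu-\lambda}\alpha)_{\nu}c$ because $(\partial\varphi)_\nu=-\nu\varphi_\nu$ on $A^{*c}$, and evaluating at $\nu=\lambda+\mu$ turns the $\huaO$-operator identity for $r_0^{\sharp}$ on $(A;\ad^*)$ into exactly \eqref{eq:symplectic4}. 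Note also that in the backward direction skew-symmetry of $r$ comes for free from Lemma \ref{lem:equivalence r-matrix}, so your step (i) only genuinely needs to be run in the forward direction.

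One caveat about the step you call ``immediate.'' As printed in the paper, the axioms \eqref{eq:symplectic1}--\eqref{eq:symplectic3} are mutually inconsistent: applying \eqref{eq:symplectic3} to the pair $(\partial a, b)$ and then \eqref{eq:symplectic2} gives $\{\partial a_\lambda b\}_\omega=-\lambda\{b_\lambda a\}_\omega=\lambda\{a_\lambda b\}_\omega$, which contradicts \eqref{eq:symplectic1} unless $\omega=0$. The intended axiom is $\{a_\lambda b\}_\omega=-\{b_{-\lambda}a\}_\omega$, and it is only with this version that your key identity $\{a_\lambda b\}_\omega=\langle\beta\otimes\alpha,r\rangle_{(-\lambda,\lambda)}$ matches $r=-r^{21}$ term by term, since pairing $r^{21}$ with $\beta\otimes\alpha$ at $(-\lambda,\lambda)$ produces $\langle\alpha\otimes\beta,r\rangle_{(\lambda,-\lambda)}=\{b_{-\lambda}a\}_\omega$, not $\{b_{\lambda}a\}_\omega$. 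With the literal \eqref{eq:symplectic3}, your claimed equivalence in step (i) would additionally require the polynomial $\langle\alpha\otimes\beta,r\rangle_{(\mu,\nu)}$ restricted to the antidiagonal to be even, which is false in general. So the step is fine, but only after fixing the convention; it deserves to be spelled out rather than labelled immediate.
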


By Lemmas \ref{lem:equivalence r-matrix} and \ref{lem:symplectic-r-matrix}, we have
\begin{cor}\label{eq:symplectic-O}
Let $A$ be a Lie conformal algebra and $\omega$ a $2$-form on $A$. Then $\omega$ is a symplectic structure on the Lie conformal algebra $A$ if and only if $(\omega^{\natural})^{-1}: A^{\ast c}\rightarrow A$ is an $\huaO$-operator on the \LP pair $(A;\ad^{\ast})$.
\end{cor}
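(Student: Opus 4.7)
My plan is to deduce the corollary by chaining Lemmas \ref{lem:equivalence r-matrix} and \ref{lem:symplectic-r-matrix} through the correspondence between non-degenerate skew-symmetric conformal $r$-matrices and symplectic structures. Given a $2$-form $\omega$ with $\omega^{\natural}$ a $\Comp[\partial]$-module isomorphism, I would let $T=(\omega^{\natural})^{-1}:A^{\ast c}\to A$ and let $r\in A\otimes A$ be the unique tensor whose associated conformal linear map $r_{\lambda}^{\sharp}$ is the constant-in-$\lambda$ map $T$. This uses the identification $A\otimes A\cong \mathrm{Chom}(A^{\ast c},A)$ together with the $\Comp[\partial]$-linearity of $T$. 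By construction $r_{0}^{\sharp}=T$ is a $\Comp[\partial]$-module isomorphism, so $r$ is non-degenerate in the sense required by Lemma \ref{lem:symplectic-r-matrix}.

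First I would verify the bookkeeping step: the skew-symmetry of $\omega$ as a $2$-form, relation \eqref{eq:symplectic3}, is equivalent to $r=-r^{21}$. This is a direct calculation using \eqref{eq:non-degenerate r-matrix} and the defining relation $\{a_{\lambda}b\}_{\omega}=\langle\omega^{\natural}(a),b\rangle_{\lambda}$. With this correspondence in place, the forward direction is immediate: if $\omega$ is symplectic, then Lemma \ref{lem:symplectic-r-matrix} produces a non-degenerate skew-symmetric conformal $r$-matrix $r$, and Lemma \ref{lem:equivalence r-matrix} upgrades this to the statement that $r_{0}^{\sharp}=(\omega^{\natural})^{-1}$ is an $\huaO$-operator on the \LP pair $(A;\ad^{\ast})$.

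For the converse, suppose $(\omega^{\natural})^{-1}$ is an $\huaO$-operator on $(A;\ad^{\ast})$; its existence already forces $\omega^{\natural}$ to be invertible, so $\omega$ is non-degenerate. Applying Lemma \ref{lem:equivalence r-matrix} in the reverse direction shows that the associated $r$ is a skew-symmetric conformal $r$-matrix, and combined with non-degeneracy of $r$ we may invoke Lemma \ref{lem:symplectic-r-matrix} to conclude that $\omega$ satisfies the $2$-cocycle condition \eqref{eq:symplectic4}, hence is symplectic. The main subtlety is the initial identification that the sharp operator associated to the tensor $r$ is precisely $(\omega^{\natural})^{-1}$ under the pairing \eqref{eq:non-degenerate r-matrix}; once this is checked, the corollary follows by a direct concatenation of the two preceding lemmas with no further computation required.
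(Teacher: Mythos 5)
Your proposal is correct and follows exactly the paper's route: the paper derives this corollary by simply concatenating Lemma \ref{lem:equivalence r-matrix} and Lemma \ref{lem:symplectic-r-matrix}, which is precisely your chain of reasoning. The extra bookkeeping you supply (identifying $(\omega^{\natural})^{-1}$ with a constant-in-$\lambda$ map $r_0^{\sharp}$ via $A\otimes A\cong{\rm Chom}(A^{\ast c},A)$, and matching skew-symmetry of $\omega$ with $r=-r^{21}$) is exactly what the paper leaves implicit.
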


\begin{defi}
Let $\omega$ be a symplectic structure and $N$ a Nijenhuis operator on a Lie conformal algebra $A$. Then $(\omega,N)$ is called a {\bf symplectic-Nijenhuis structure } on the Lie conformal algebra $A$ if for all $a,b\in A$,
\begin{equation}\label{eq:SN1}
\{N(a)_\lambda b\}_\omega=\{a_\lambda N(b)\}_\omega
\end{equation}
and $\omega_N:A\otimes A\rightarrow\Comp[\lambda]$ defined by
\begin{equation}
  \{a_\lambda b\}_{\omega_N}=\langle\omega^\natural(N(a)),b\rangle_\lambda
\end{equation}
is also a $2$-cocycle.
\end{defi}
Note that \eqref{eq:SN1} implies that $\omega_N$ is a $2$-form on the Lie conformal algebra $A$.

Recall that a {\bf symplectic structure} on a Lie algebra $(\g,[\cdot,\cdot])$  is a nondegenerate $2$-cocycle $\omega\in\wedge^2\g^*$, i.e.
  $$\omega([x,y],z)+\omega([y,z],x)+\omega([z,x],y)=0,\quad\forall~x,y,z\in\g.$$

Let $\omega$ be a symplectic structure and $N$ a Nijenhuis operator on a Lie algebra $\g$. Recall that a {\bf symplectic-Nijenhuis structure} on a Lie algebra $\g$ is a pair $(\omega,N)$ satisfying that for $x,y\in\g$,
$$\omega(N(x),y)=\omega(x,N(y))$$
and $\omega_N:\g\otimes \g\rightarrow\g$ defined by
$$\omega_N(x,y)=\omega(N(x),y)$$
is also a $2$-cocycle on the Lie algebra $\g$.

\begin{ex}
  Let $(\omega,N)$ be a symplectic-Nijenhuis structure on a Lie algebra $\g$. Then $(\tilde{\omega},\tilde{N})$ is a symplectic-Nijenhuis structure on the current Lie conformal algebra ${\rm Cur}~\g=\Comp[\partial]\otimes\g$, where $\tilde{\omega}$ is defined by
   \begin{equation}\label{eq:sym-current}
  \{f(\partial)x_\lambda g(\partial)y\}_{\tilde{\omega}}=f(-\lambda)g(\lambda)\omega(x,y),\quad x,y\in\g,f(\partial),g(\partial)\in\Comp[\partial],
  \end{equation}
and $\tilde{N}$ is given by \eqref{ex:Nij-current}.
\end{ex}

 \begin{ex}\label{ex:SN2}
  Given a Lie conformal algebra $A=\Comp[\partial]a\oplus \Comp[\partial]b$ with the $\lambda$-bracket given by
  $$[a_\lambda a]=(\partial+2\lambda)a,\quad [a_\lambda b]=(\partial+k\lambda+l)b,\quad [b_\lambda b]=0,\quad k,l\in\Comp.$$
  Define  $\rho:A\rightarrow{\rm gc}(A)$ by
  $$\rho(a)_\lambda(a)=(\partial+\lambda+m)a,\quad \rho(a)_\lambda(b)=(\partial+k\lambda+l)b,\quad \rho(b)_\lambda(a)=\rho(b)_\lambda(b)=0,$$
  where $m\in \Comp$. It is straightforward to check that $\rho$ gives a module of $A$ over $A$.  Consider the semi-direct Lie conformal algebra $D=A\ltimes_{\rho^*}A^{*c}$, $\omega$ defined by
  \begin{align*}
\{a_\lambda b\}_\omega=0,\quad  \{a_\lambda a^*\}_\omega=-1,\quad \{a_\lambda b^*\}_\omega=0,\quad \{b_\lambda a^*\}_\omega=0,\quad  \{b_\lambda b^*\}_\omega=-1
  \end{align*}
  is a symplectic structure on the Lie conformal algebra $D$, where $\{a^*,b^*\}$ is the dual $\Comp[\partial]$-basis of $A^{*c}$. Then $N:D\rightarrow D$ defined by
  $$N(a)=k_1a,\quad N(b)=k_2 b,\quad N(a^*)=k_1 a^*,\quad N(b^*)=k_2 b^*$$
  is a Nijenhuis operator on the Lie conformal algebra $D$. Furthermore, $(\omega,N)$ is a symplectic-Nijenhuis structure on the Lie conformal algebra $D$.
\end{ex}

\begin{ex}\label{ex:SN3}
  Given a Lie conformal algebra $A=\Comp[\partial]a\oplus \Comp[\partial]b$ with the $\lambda$-bracket given by
  $$[a_\lambda a]=0,\quad [a_\lambda b]=\lambda a,\quad [b_\lambda b]=\partial b+2\lambda b.$$
  Then $\omega$ defined by
  \begin{align*}
\{a_\lambda b\}_\omega&=\lambda^2 b
  \end{align*}
  is a symplectic structure on the Lie conformal algebra $A$. It is obvious that $N:A\rightarrow A$ defined by
  $$N(a)=k_1a,\quad N(b)=k_1 b$$
  is a Nijenhuis operator on the Lie conformal algebra $A$. Furthermore, $(\omega,N)$ is a symplectic-Nijenhuis structure on the Lie conformal algebra $A$.
\end{ex}

\begin{pro}
If $(\omega, N)$ is a symplectic-Nijenhuis structure on $A$, then for any $k\in \Nat$, $\omega_{N^k}$ is closed, i.e. $\dM^T\omega_{N^k}=0 $, where $\omega_{N^k}:A\otimes A\rightarrow\Comp[\lambda]$ is defined by
\begin{equation}
  \{a_\lambda b\}_{\omega_{N^k}}=\langle\omega^\natural(N^k(a)),b\rangle_\lambda.
\end{equation}
\end{pro}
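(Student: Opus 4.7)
The plan is induction on $k$, with base cases $k=0$ (closedness of $\omega$ itself) and $k=1$ (closedness of $\omega_N$, part of the definition of a symplectic-Nijenhuis structure). The inductive step I would derive from the hierarchy of $\GRBN$-structures obtained by recognizing the symplectic-Nijenhuis data as an $\GRBN$-structure on the \LP pair $(A;\ad^{*})$.

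Concretely, I would set $T := (\omega^\natural)^{-1}\colon A^{*c}\to A$ and show that $(T,N,N^{*})$ is an $\GRBN$-structure on $(A;\ad^{*})$. The first three conditions are essentially free: $T$ is an invertible $\GRB$-operator by Corollary~\ref{eq:symplectic-O}; $(N,N^{*})$ is a Nijenhuis structure on $(A;\ad^{*})$ by Example~\ref{ex:cNP}; and the symplectic-Nijenhuis compatibility $\{N(a)_\lambda b\}_\omega=\{a_\lambda N(b)\}_\omega$ dualizes to $\omega^\natural N=N^{*}\omega^\natural$, i.e.\ $N\circ T=T\circ N^{*}$. For the final condition $[\alpha_\lambda\beta]^{N\circ T}=[\alpha_\lambda\beta]^{T}_{N^{*}}$, I would expand both sides using $NT=TN^{*}$ and rewrite $\dM^{T}\omega_N=0$ in dual form via $\alpha_\lambda([b_\mu c])=-(\ad^{*}(b)_\mu\alpha)_{\lambda+\mu}(c)$ together with the substitutions $\alpha=\omega^\natural(a)$, $\beta=\omega^\natural(b)$ and the $\GRB$-operator identity $\omega^\natural[a_\lambda b]=[\alpha_\lambda\beta]^{T}$; both conditions collapse to the single identity
\[
N^{*}[\alpha_\lambda\beta]^{T}=\ad^{*}(T\alpha)_\lambda N^{*}\beta-\ad^{*}(T\beta)_{-\lambda-\partial}N^{*}\alpha.
\]

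Since $T$ is invertible, Proposition~\ref{pro:GRBN-invertible} then produces $(T,N^{k},(N^{*})^{k})$ as an $\GRBN$-structure on $(A;\ad^{*})$ for every $k\in\Nat$. Its defining condition $[\alpha_\lambda\beta]^{N^{k}\circ T}=[\alpha_\lambda\beta]^{T}_{(N^{*})^{k}}$, via the same dual translation in reverse (now with $(N^{*})^{k}\omega^\natural=\omega^\natural N^{k}$), is equivalent to $\dM^{T}\omega_{N^{k}}=0$, which closes the induction. The main technical obstacle is the bookkeeping in this dual equivalence: the $\GRBN$-condition carries a single spectral parameter $\lambda$ while the 2-cocycle identity carries two parameters $\lambda,\mu$, and the bridge is the skew-symmetry axiom~\eqref{eq:Lie conformal2} combined with the definition of $[\cdot_\lambda\cdot]^{T}$, which effectively specializes $\mu$ to $-\lambda-\partial$ when passing between the two formulations; once this is handled cleanly in both directions, the hierarchy of $\GRBN$-structures yields the hierarchy of closed forms $\omega_{N^{k}}$ automatically.
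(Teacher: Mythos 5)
Your proposal is correct, but it takes a genuinely different route from the paper's proof. The paper argues entirely inside the symplectic formalism: a direct computation using only the Nijenhuis identity for $N$ yields a second-order recursion expressing $\{a_{\lambda_1}b_{\lambda_2}c\}_{\dM^T\omega_{N^{k+1}}}$ as the sum of $\dM^T\omega_{N^{k}}$ evaluated at $(N(a),b,c)$ and at $(a,N(b),c)$ minus $\dM^T\omega_{N^{k-1}}$ evaluated at $(N(a),N(b),c)$, so closedness for all $k$ follows by a two-step induction from the base cases $\dM^T\omega=0$ and $\dM^T\omega_N=0$. You instead route the statement through the $\GRBN$-machinery: recognizing $((\omega^\natural)^{-1},N,N^*)$ as an $\GRBN$-structure on $(A;\ad^*)$ (this is Theorem \ref{thm:SN-ON}), invoking Proposition \ref{pro:GRBN-invertible} to obtain the hierarchy $((\omega^\natural)^{-1},N^k,(N^*)^k)$, and translating back. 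This is precisely how the paper later deduces the stronger corollary that each $(\omega,N^k)$ is itself a symplectic-Nijenhuis structure, so your argument proves more; the cost is that it leans on results stated after this proposition (there is no circularity, since neither Theorem \ref{thm:SN-ON} nor Proposition \ref{pro:GRBN-invertible} uses it, but the exposition would need reordering or forward references). Two minor points: once Proposition \ref{pro:GRBN-invertible} delivers the whole hierarchy at once, the induction framing is superfluous, as each $k$ is handled uniformly rather than from $k-1$; and in the reverse translation for $N^k$ you should record explicitly that $N^k$ is again a Nijenhuis operator and that $\{N^k(a)_\lambda b\}_\omega=\{a_\lambda N^k(b)\}_\omega$ follows by iterating \eqref{eq:SN1}, since both are needed as inputs and the paper only sketches that converse direction of Theorem \ref{thm:SN-ON}.
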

\begin{proof}
  By a direct calculation, for $a,b,c\in A$, we have
  \begin{eqnarray*}
    \{a_{\lambda_1}b_{\lambda_2}c\}_{\dM ^T\omega_{N^{k+1}}}&=&\{N(a)_{\lambda_1}b_{\lambda_2}c\}_{\omega_{N^{k}}}+\{a_{\lambda_1}N(b)_{\lambda_2}c\}_{\omega_{N^{k}}}-\{N(a)_{\lambda_1}N(b)_{\lambda_2}c\}_{\omega_{N^{k-1}}}\\
    &&+\{\big(N\big([N(a)_{\lambda_1}(b)]+[a_{\lambda_1}N(b)]-N[a_{\lambda_1}b]\big)-[N(a)_{\lambda_1}N(b)]\big)_{\lambda_1+\lambda_2}N^{k-1}(c)\}_\omega.
  \end{eqnarray*}
  Since $N$ is a Nijenhuis operator on the Lie conformal algebra $A$, the above equality implies
  $$\{a_{\lambda_1}b_{\lambda_2}c\}_{\dM ^T\omega_{N^{k+1}}}=\{N(a)_{\lambda_1}b_{\lambda_2}c\}_{\omega_{N^{k}}}+\{a_{\lambda_1}N(b)_{\lambda_2}c\}_{\omega_{N^{k}}}-\{N(a)_{\lambda_1}N(b)_{\lambda_2}c\}_{\omega_{N^{k-1}}}.$$
  By induction on $k$, the conclusion follows.
\end{proof}

There is a close relationship between symplectic-Nijenhuis structures and $\GRBN$-structures.

\begin{thm}\label{thm:SN-ON}
Let $\omega:A\otimes A\rightarrow\Comp[\lambda]$ be a non-degenerate $2$-form and $N:A\rightarrow A$ a Nijenhuis operator on a finite Lie conformal algebra $A$. Then $(\omega,N)$ is a symplectic-Nijenhuis structure on $A$ if and only if $((\omega^{\natural})^{-1},N,S=N^{\ast})$ is an $\GRBN$-structure on the \LP pair $(A;\ad^{\ast})$.
\end{thm}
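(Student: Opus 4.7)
The plan is to verify the equivalence by matching each of the four defining axioms of an $\GRBN$-structure on the \LP pair $(A;\ad^{\ast})$, with $T := (\omega^{\natural})^{-1}$, against a corresponding condition on $\omega$ and $N$: $\mathrm{(i)}$ $T$ is an $\huaO$-operator on $(A;\ad^{\ast})$; $\mathrm{(ii)}$ $(N,N^{\ast})$ is a Nijenhuis structure on $(A;\ad^{\ast})$; $\mathrm{(iii)}$ $N\circ T = T\circ N^{\ast}$; and $\mathrm{(iv)}$ $[\alpha_{\lambda}\beta]^{N\circ T} = [\alpha_{\lambda}\beta]^{T}_{N^{\ast}}$ for all $\alpha,\beta\in A^{\ast c}$.

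Axiom $\mathrm{(i)}$ is equivalent to $\omega$ being symplectic by Corollary \ref{eq:symplectic-O}, and axiom $\mathrm{(ii)}$ comes for free via Example \ref{ex:cNP} once $N$ is a Nijenhuis operator on $A$. For $\mathrm{(iii)}$, pairing $N(T\alpha) = T(N^{\ast}\alpha)$ against an arbitrary $b\in A$ through $\omega$, and using the defining identities $\{T\gamma_{\lambda}b\}_{\omega}=\gamma_{\lambda}b$ and $(N^{\ast}\alpha)_{\lambda}b = \alpha_{\lambda}N(b)$, rephrases the equation as $\{N(a)_{\lambda}b\}_{\omega} = \{a_{\lambda}N(b)\}_{\omega}$ with $a = T\alpha$, which is precisely \eqref{eq:SN1}; since $T$ is a $\Comp[\partial]$-module isomorphism, the two conditions are equivalent.

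The decisive step is axiom $\mathrm{(iv)}$. Using $N\circ T = T\circ N^{\ast}$ from $\mathrm{(iii)}$ to cancel the two leading terms on each side reduces $\mathrm{(iv)}$ to
\begin{equation*}
\ad^{\ast}(T\alpha)_{\lambda}(N^{\ast}\beta) - N^{\ast}\ad^{\ast}(T\alpha)_{\lambda}\beta = \ad^{\ast}(T\beta)_{-\lambda-\partial}(N^{\ast}\alpha) - N^{\ast}\ad^{\ast}(T\beta)_{-\lambda-\partial}\alpha.
\end{equation*}
Evaluating both sides on a test element $c\in A$, expanding the coadjoint action via $(\ad^{\ast}(a)_{\lambda}\gamma)_{\mu}c = -\gamma_{\mu-\lambda}[a_{\lambda}c]$, and transferring the pairings through $\gamma_{\nu}d = \{T\gamma_{\nu}d\}_{\omega}$ (with $a = T\alpha$, $b = T\beta$) produces an identity between $\omega$-pairings involving the combinations $N[a_{\lambda}c] - [a_{\lambda}N(c)]$ and their $a,b$-swapped counterparts. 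Combining with $\dM^{T}\omega = 0$ applied at $N(c)$ and with \eqref{eq:SN1} to move $N$ through the $\omega$-pairing, this identity is reduced to the cocycle equation $\dM^{T}\omega_{N} = 0$; the argument is reversible, yielding both directions of the equivalence.

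The main obstacle is the $\lambda$-$\mu$-$\partial$ bookkeeping in step $\mathrm{(iv)}$, particularly handling the shifted parameter $\mu+\lambda+\partial$ using the conformal and symplectic skew-symmetries \eqref{eq:Lie conformal2} and \eqref{eq:symplectic3} and then aligning the result with the symmetric form of $\dM^{T}\omega_{N} = 0$. Once this bookkeeping is organized, the reduction is routine.
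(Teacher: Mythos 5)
Your proposal is correct and follows essentially the same route as the paper: Corollary \ref{eq:symplectic-O} handles the $\huaO$-operator axiom, pairing $N\circ(\omega^{\natural})^{-1}=(\omega^{\natural})^{-1}\circ N^{\ast}$ against test elements identifies it with \eqref{eq:SN1}, and the deformed-bracket identity is reduced, via the coadjoint expansion together with $\dM^{T}\omega=0$ and \eqref{eq:SN1}, to $\dM^{T}\omega_{N}=0$ — exactly the chain of $\omega$-pairings computed in the paper. Your axiom-by-axiom organization makes the reversibility of each step (and hence the ``only if'' direction, which the paper leaves as ``similar'') slightly more explicit, but the substance is the same.
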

\begin{proof}
Assume that $(\omega,N)$ is a symplectic-Nijenhuis structure on $A$. By \eqref{eq:symplectic3}, we have
 $$\langle\omega^\natural(a),b\rangle_\lambda=-\langle\omega^\natural(b),a\rangle_\lambda.$$
 Since $\omega$ is non-degenerate, letting $a=(\omega^\natural)^{-1}(\alpha)$ and $b=(\omega^\natural)^{-1}(\beta)$, we also have
 \begin{equation}\label{eq:conformal pair}
  \langle\alpha,(\omega^\natural)^{-1}(\beta)\rangle_\lambda=-\langle\beta,(\omega^\natural)^{-1}(\alpha)\rangle_\lambda.
 \end{equation}
By \eqref{eq:SN1}, letting $a=(\omega^\natural)^{-1}(\alpha)$ and $b=(\omega^\natural)^{-1}(\beta)$, we have
$$\langle\omega^\natural(N(\omega^\natural)^{-1}(\alpha)),(\omega^\natural)^{-1}(\beta)\rangle_\lambda=-\langle\alpha,N((\omega^\natural)^{-1}(\beta))\rangle_\lambda.$$
Furthermore, by \eqref{eq:conformal pair}, we have
$$\langle\alpha,(\omega^\natural)^{-1}(N^*\beta)\rangle_\lambda=\langle\alpha,N((\omega^\natural)^{-1}(\beta))\rangle_\lambda,$$
 which implies that $(\omega^\natural)^{-1}\circ N^*=N\circ (\omega^\natural)^{-1}$ holds.

By Corollary \ref{eq:symplectic-O}, $\omega$ is a symplectic structure on the Lie conformal algebra $A$ if and only if $(\omega^{\natural})^{-1}: A^{\ast c}\rightarrow A$ is an $\huaO$-operator on the \LP pair $(A;\ad^{\ast})$. In the following, we need to show that
\begin{equation}\label{eq:ttt}
[\alpha_\lambda\beta]^{N\circ (\omega^\natural)^{-1}}= [\alpha_\lambda\beta]^{(\omega^\natural)^{-1}}_{N^*},\quad \forall~ \alpha,\beta\in A^{* c}.
\end{equation}
 By the fact that $\omega$ is a $2$-cocycle,  letting $a=(\omega^\natural)^{-1}(\alpha)$ and $b=(\omega^\natural)^{-1}(\beta)$,  we have
\begin{eqnarray*}
  &&\langle[\alpha_\lambda\beta]^{N\circ (\omega^\natural)^{-1}}- [\alpha_\lambda\beta]^{(\omega^\natural)^{-1}}_{N^*},c\rangle_\mu\\
  &=&\langle \ad^*((\omega^\natural)^{-1}(\alpha))_\lambda N^*(\beta)-\ad^*((\omega^\natural)^{-1}(\beta))_{-\lambda-\partial} N^*(\alpha)-N^*(\ad^*((\omega^\natural)^{-1}(\alpha))_\lambda \beta)\\
  &&+N^*(\ad^*((\omega^\natural)^{-1}(\beta))_{-\lambda-\partial} \alpha), c   \rangle_\mu\\
  &=&\langle\ad^*(a)_\lambda N^*\omega^\natural(b)-\ad^*(b)_{-\lambda-\partial}N^*\omega^\natural(a)-N^*(\ad^*(a)_\lambda\omega^\natural(b))+N^*(\ad^*(b)_{-\lambda-\partial}\omega^\natural(a)),c\rangle_\mu\\
  &=&\langle\ad^*(a)_\lambda N^*\omega^\natural(b)-\ad^*(b)_{\mu-\lambda}N^*\omega^\natural(a)-N^*(\ad^*(a)_\lambda\omega^\natural(b))+N^*(\ad^*(b)_{\mu-\lambda}\omega^\natural(a)),c\rangle_\mu\\
  &=&-\{b_{\mu-\lambda}N[a_\lambda c]\}_\omega+\{a_\lambda N[b_{\mu-\lambda}c]\}_\omega+\{b_{\mu-\lambda}[a_\lambda N (c)]\}_\omega-\{a_\lambda [b_{\mu-\lambda}N(c)]\}_\omega\\
  &=&-\{b_{\mu-\lambda}N[a_\lambda c]\}_\omega+\{a_\lambda N[b_{\mu-\lambda}c]\}_\omega-\{[a_\lambda b]_\mu N(c)\}_\omega\\
  &=&-\{N(b)_{\mu-\lambda}[a_\lambda c]\}_\omega+\{N(a)_\lambda [b_{\mu-\lambda}c]\}_\omega-\{N[a_\lambda b]_\mu c\}_\omega\\
  &=&-\{b_{\mu-\lambda}[a_\lambda c]\}_{\omega_N}+\{a_\lambda [b_{\mu-\lambda}c]\}_{\omega_N}-\{[a_\lambda b]_\mu c\}_{\omega_N}.
\end{eqnarray*}
Since $\omega_N$ is a $2$-cocycle, \eqref{eq:ttt} follows. Thus $((\omega^{\natural})^{-1},N,S=N^{\ast})$ is an $\GRBN$-structure on the \LP pair $(A;\ad^{\ast})$.

The converse can be proved similarly. We omit the details.
\end{proof}

By Proposition \ref{pro:non-degenerate} and Theorem \ref{thm:SN-ON}, we have
\begin{cor}\label{cor:SN-rN}
Let $\omega$ be a nondegenerate $2$-form and $N:A\rightarrow A$ a Nijenhuis operator on a finite Lie conformal algebra $A$. Then $(\omega, N)$ is symplectic-Nijenhuis structure on $A$ if and only if $(r,N)$ is a conformal $r$-matrix-Nijenhuis structure on $A$, where $r\in A\otimes A$ is defined by
\begin{equation}\label{eq:r-matrix-symplectic}
\langle\beta,(\omega^{\natural})^{-1}(\alpha)\rangle_{\lambda}=\langle\alpha\otimes \beta,r\rangle_{(-\lambda,\lambda)} \quad\forall~ \alpha,\beta\in A^{\ast c}.
\end{equation}
\end{cor}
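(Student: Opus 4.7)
The plan is to chain Theorem~\ref{thm:SN-ON} and Proposition~\ref{pro:non-degenerate}, using the defining relation \eqref{eq:r-matrix-symplectic} as a dictionary between the non-degenerate $2$-form $\omega$ and the element $r\in A\otimes A$. The key observation is that \eqref{eq:r-matrix-symplectic} is exactly the defining identity \eqref{eq:non-degenerate r-matrix} for a non-degenerate $r$-matrix under the identification $r^{\sharp}_{0}=(\omega^{\natural})^{-1}$. Since $A$ is finite and free as a $\Comp[\partial]$-module, equation \eqref{eq:r-matrix-symplectic} uniquely determines $r$ from $(\omega^{\natural})^{-1}$, and conversely $\omega$ is recovered from $r$ via $\{a_{\lambda}b\}_{\omega}=\langle(r^{\sharp}_{0})^{-1}(a),b\rangle_{\lambda}$.

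First I will verify the setup: given a non-degenerate $2$-form $\omega$, Lemma~\ref{lem:symplectic-r-matrix} guarantees that $\omega$ is a symplectic structure (i.e.\ a $2$-cocycle) if and only if the corresponding $r$ is a skew-symmetric, non-degenerate conformal $r$-matrix; in particular, in this case one has $r^{\sharp}_{\lambda}=r^{\sharp}_{0}=(\omega^{\natural})^{-1}$, so the hypothesis of Proposition~\ref{pro:non-degenerate} is satisfied. Thus, independently of whether we start with the symplectic side or the $r$-matrix side, the Nijenhuis operator $N$ on $A$ is the same object on both sides, and the two base structures (symplectic $\omega$ versus skew-symmetric $r$-matrix $r$) correspond bijectively.

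For the forward direction, suppose $(\omega,N)$ is a symplectic-Nijenhuis structure. Then by Theorem~\ref{thm:SN-ON}, $((\omega^{\natural})^{-1},N,N^{\ast})$ is an $\GRBN$-structure on the \LP pair $(A;\ad^{\ast})$. Since $(\omega^{\natural})^{-1}=r^{\sharp}_{0}=r^{\sharp}_{\lambda}$, Proposition~\ref{pro:non-degenerate} then yields that $(r,N)$ is a conformal $r$-matrix-Nijenhuis structure on $A$. Reversing the arrows gives the converse: starting from the conformal $r$-matrix-Nijenhuis structure $(r,N)$, Proposition~\ref{pro:non-degenerate} produces the $\GRBN$-structure $(r^{\sharp}_{0},N,N^{\ast})=((\omega^{\natural})^{-1},N,N^{\ast})$ on $(A;\ad^{\ast})$, and then Theorem~\ref{thm:SN-ON} delivers the symplectic-Nijenhuis structure $(\omega,N)$.

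The only step that needs some care is the bookkeeping in the first paragraph, namely confirming that \eqref{eq:r-matrix-symplectic} genuinely implies $r^{\sharp}_{\lambda}=(\omega^{\natural})^{-1}$ (i.e.\ $r^{\sharp}_{\lambda}$ is $\lambda$-independent) so that $r$ satisfies the hypotheses of Proposition~\ref{pro:non-degenerate}; this is essentially the content of Lemma~\ref{lem:symplectic-r-matrix}, and once it is in hand the rest of the argument is purely formal. I do not foresee any serious obstacle, since all the analytical work has been done in Theorem~\ref{thm:SN-ON} and Proposition~\ref{pro:non-degenerate}.
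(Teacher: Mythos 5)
Your proposal is correct and follows essentially the same route as the paper, which proves the corollary simply by combining Proposition~\ref{pro:non-degenerate} with Theorem~\ref{thm:SN-ON}. The extra care you take in checking that the $r$ defined by \eqref{eq:r-matrix-symplectic} is skew-symmetric, non-degenerate with $r^{\sharp}_{\lambda}=r^{\sharp}_{0}=(\omega^{\natural})^{-1}$ (via Lemma~\ref{lem:symplectic-r-matrix}), so that the hypotheses of Proposition~\ref{pro:non-degenerate} are actually met, is a point the paper leaves implicit, and is worth making explicit.
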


By Proposition \ref{pro:compatible r-matrix} and Corollary \ref{cor:SN-rN}, we have
\begin{cor}
If $(\omega, N)$ is a symplectic-Nijenhuis structure on a Lie conformal algebra $A$, then $r$ and $r_N$  are compatible $r$-matrices, where $r$ is given by \eqref{eq:r-matrix-symplectic} and $r_N$ is given by
\begin{equation}
\langle\beta,N(\omega^{\natural})^{-1}(\alpha)\rangle_{\lambda}=\langle\alpha\otimes \beta,r_N\rangle_{(-\lambda,\lambda)} \quad\forall~ \alpha,\beta\in A^{\ast c}.
\end{equation}
\end{cor}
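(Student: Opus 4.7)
The plan is a direct combination of two previously established results, with a brief bookkeeping step to reconcile the two descriptions of $r_N$. First, I would invoke Corollary \ref{cor:SN-rN} to translate the hypothesis: since $(\omega, N)$ is a symplectic-Nijenhuis structure on $A$, the pair $(r, N)$ is a conformal $r$-matrix-Nijenhuis structure on $A$, where $r$ is the skew-symmetric non-degenerate conformal $r$-matrix determined by \eqref{eq:r-matrix-symplectic}. In particular, $r$ is non-degenerate, so $(\omega^{\natural})^{-1} = r^{\sharp}_0 = r^\sharp_\lambda$.

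Second, I would apply Proposition \ref{pro:compatible r-matrix} to $(r, N)$. Writing $r = \sum_i a_i \otimes b_i$, that proposition produces the skew-symmetric conformal $r$-matrix $\widetilde{r}_N := \sum_i a_i \otimes N(b_i)$ and asserts that $r$ and $\widetilde{r}_N$ are compatible, i.e., any $\Comp$-linear combination is still a conformal $r$-matrix. This is precisely the content claimed by the corollary, provided $\widetilde{r}_N$ agrees with the $r_N$ specified in the statement.

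The last step is that identification. Using $(\omega^{\natural})^{-1} = r^{\sharp}_0$ and the definition of $r^{\sharp}_0$, one has for all $\alpha, \beta \in A^{\ast c}$
\[
\langle\beta, N(\omega^{\natural})^{-1}(\alpha)\rangle_{\lambda} = \langle\beta, N r^{\sharp}_0(\alpha)\rangle_{\lambda} = \sum_i \langle\alpha, a_i\rangle_{-\partial}\,\langle\beta, N(b_i)\rangle_{\lambda},
\]
which is $\langle \alpha \otimes \beta,\, \widetilde{r}_N\rangle_{(-\lambda,\lambda)}$. Hence $r_N = \widetilde{r}_N$, and the compatibility statement follows at once.

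I do not anticipate any genuine obstacle here; the corollary is a repackaging of Proposition \ref{pro:compatible r-matrix} via the correspondence of Corollary \ref{cor:SN-rN}, and the only mild point is the routine verification that the formula defining $r_N$ in terms of $N \circ (\omega^{\natural})^{-1}$ coincides with the elementwise expression $\sum_i a_i \otimes N(b_i)$ obtained from Proposition \ref{pro:compatible r-matrix}.
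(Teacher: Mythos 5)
Your proposal is correct and follows exactly the paper's route: the corollary is obtained by combining Corollary \ref{cor:SN-rN} with Proposition \ref{pro:compatible r-matrix}, which is precisely what the paper does. The extra bookkeeping you include, identifying the $r_N$ defined via $N\circ(\omega^{\natural})^{-1}$ with the elementwise $\sum_i a_i\otimes N(b_i)$, is a harmless (and correct) verification the paper leaves implicit.
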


By Proposition \ref{pro:compatible r-matrix2} and Corollary \ref{cor:SN-rN}, we have
\begin{cor}\label{cor:CNcom}
If $(\omega, N)$ is a symplectic-Nijenhuis structure on a Lie conformal algebra $A$, then for any $k,l\in\Nat$, $r_{N^k}$ and $r_{N^l}$ defined by
  \begin{eqnarray*}
\langle\beta,N(\omega^{\natural})^{-1}(\alpha)\rangle_{\lambda}=\langle\alpha\otimes \beta,r_N\rangle_{(-\lambda,\lambda)},\quad
 \langle\beta,N(\omega^{\natural})^{-1}(\alpha)\rangle_{\lambda}=\langle\alpha\otimes \beta,r_N\rangle_{(-\lambda,\lambda)},\quad\forall~ \alpha,\beta\in A^{\ast c}
\end{eqnarray*}
  are pairwise compatible $r$-matrices.
\end{cor}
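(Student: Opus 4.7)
The plan is to reduce this statement to Proposition~\ref{pro:compatible r-matrix2} by passing through Corollary~\ref{cor:SN-rN}. First I observe that the defining equations in the statement are just the concrete presentation of the elements $r_{N^k}\in A\otimes A$ obtained by applying $N^k$ to the second tensor factor of the $r$-matrix $r$ associated to $\omega$; once this is verified, pairwise compatibility is immediate from Proposition~\ref{pro:compatible r-matrix2}.

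In more detail, I would begin by invoking Corollary~\ref{cor:SN-rN} to obtain from the symplectic-Nijenhuis structure $(\omega,N)$ a conformal $r$-matrix-Nijenhuis structure $(r,N)$, where $r\in A\otimes A$ is characterized by \eqref{eq:r-matrix-symplectic}. Writing $r=\sum_i a_i\otimes b_i$, so that $r_\lambda^\sharp(\alpha)=\sum_i\langle\alpha,a_i\rangle_{-\lambda-\partial}b_i$, the identity \eqref{eq:r-matrix-symplectic} combined with \eqref{eq:lie66} (specialized at $\mu=-\lambda$) gives $r^\sharp_0=(\omega^\natural)^{-1}$. For each $k\in\Nat$, set $r_{N^k}:=\sum_i a_i\otimes N^k(b_i)$; a direct computation using the definition of the sharp map yields
\[
(r_{N^k})^\sharp_\lambda(\alpha)=\sum_i\langle\alpha,a_i\rangle_{-\lambda-\partial}N^k(b_i)=N^k\bigl(r^\sharp_\lambda(\alpha)\bigr),
\]
so that $(r_{N^k})^\sharp_0=N^k\circ(\omega^\natural)^{-1}$. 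Applying \eqref{eq:lie66} again at $\mu=-\lambda$ to $r_{N^k}$ therefore recovers precisely the pairing formula stated in the corollary, which identifies the $r_{N^k}$ of the statement with those of Proposition~\ref{pro:compatible r-matrix2}.

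Having made this identification, Proposition~\ref{pro:compatible r-matrix2} applied to the conformal $r$-matrix-Nijenhuis structure $(r,N)$ directly yields that the family $\{r_{N^k}\}_{k\in\Nat}$ consists of pairwise compatible skew-symmetric conformal $r$-matrices. In particular, for any $k,l\in\Nat$, $r_{N^k}$ and $r_{N^l}$ are compatible, which is the desired conclusion.

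The only mildly delicate point is the bookkeeping identifying the two presentations of $r_{N^k}$, i.e.\ checking that the pairing $\langle\beta,N^k(\omega^\natural)^{-1}(\alpha)\rangle_\lambda=\langle\alpha\otimes\beta,r_{N^k}\rangle_{(-\lambda,\lambda)}$ agrees with the tensorial definition $r_{N^k}=\sum_i a_i\otimes N^k(b_i)$; this is a routine calculation using only the conformal linearity of $N^k$ and the definitions of $r^\sharp$ and the pairing. Everything else is a pure application of the earlier results, so no new structural argument is required.
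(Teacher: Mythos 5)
Your proposal matches the paper's proof exactly: the paper derives this corollary directly from Proposition \ref{pro:compatible r-matrix2} and Corollary \ref{cor:SN-rN}, which is precisely your route. The extra bookkeeping you supply (identifying the pairing presentation of $r_{N^k}$ with the tensorial one $\sum_i a_i\otimes N^k(b_i)$ via $(r_{N^k})^\sharp_0=N^k\circ(\omega^\natural)^{-1}$) is a correct and welcome elaboration of a step the paper leaves implicit.
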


\emptycomment{\begin{cor}
If $(\omega, N)$ is symplectic-Nijenhuis structure on a Lie conformal algebra $A$, then $\omega_{N^k}$ are closed, i.e. $\dM^T\omega_{N^k}=0 $, where $\omega_{N^k}:A\otimes A\rightarrow\Comp[\lambda]$ defined by
\begin{equation}
  \{a_\lambda b\}_{\omega_N^k}=\langle\omega^\natural(N^k(a)),b\rangle_\lambda.
\end{equation}
\end{cor}}

By Theorems \ref{thm:ON construction} and \ref{thm:SN-ON}, we have
\begin{cor}
Let $r$ be a conformal $r$-matrix and $\omega$ a symplectic structure on a Lie conformal algebra $A$. Define $\tilde{r}\in A\otimes A$ by
\begin{equation*}
\langle\beta,(\omega^{\natural})^{-1}(\alpha)\rangle_{\lambda}=\langle\alpha\otimes \beta,\tilde{r}\rangle_{(-\lambda,\lambda)} \quad\forall~ \alpha,\beta\in A^{\ast c}.
\end{equation*}
If $r$ and $\tilde{r}$ are compatible $r$-matrices, then $(\omega,N=r_0^{\sharp}\circ\omega^{\natural})$ is a symplectic-Nijenhuis structure on the Lie conformal algebra $A$.
\end{cor}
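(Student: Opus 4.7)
The plan is to reduce the corollary to a direct application of Theorems \ref{thm:ON construction} and \ref{thm:SN-ON}, using the dictionary between conformal $r$-matrices and $\huaO$-operators on the \LP pair $(A;\ad^*)$ furnished by Lemma \ref{lem:equivalence r-matrix}. The essential observation is that the element $\tilde{r}$ is precisely the non-degenerate conformal $r$-matrix associated with $\omega$: comparing its defining identity with equation \eqref{eq:non-degenerate r-matrix}, we see that $\tilde{r}_0^{\sharp}=(\omega^{\natural})^{-1}$, and Lemma \ref{lem:symplectic-r-matrix} guarantees that $\tilde{r}$ is indeed a skew-symmetric $r$-matrix.

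First, I would set $T:=(\omega^{\natural})^{-1}=\tilde{r}_0^{\sharp}$ and $T_1:=r_0^{\sharp}$. Then $T$ is an invertible $\huaO$-operator on $(A;\ad^*)$ by Corollary \ref{eq:symplectic-O}, and $T_1$ is an $\huaO$-operator by Lemma \ref{lem:equivalence r-matrix}. Since the assignment $r\mapsto r_0^{\sharp}$ is $\Comp$-linear, the compatibility of $r$ and $\tilde{r}$ (any linear combination $k_1 r+k_2\tilde{r}$ is a skew-symmetric $r$-matrix) together with Lemma \ref{lem:equivalence r-matrix} applied to each such combination yields that $k_1 T_1+k_2 T$ is an $\huaO$-operator for every $k_1,k_2\in\Comp$. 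Thus $T$ and $T_1$ are compatible $\huaO$-operators on $(A;\ad^*)$, so Theorem \ref{thm:ON construction}(i) produces an $\GRBN$-structure $(T,\,S=T^{-1}\circ T_1,\,N=T_1\circ T^{-1})=((\omega^{\natural})^{-1},\,\omega^{\natural}\circ r_0^{\sharp},\,r_0^{\sharp}\circ\omega^{\natural})$ on $(A;\ad^*)$.

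To close the loop with Theorem \ref{thm:SN-ON} it remains to identify the operator $S=\omega^{\natural}\circ r_0^{\sharp}$ with the dual $N^{*}$ of $N=r_0^{\sharp}\circ\omega^{\natural}$. For $\beta\in A^{*c}$ and $a\in A$, unpacking the definitions gives
\begin{eqnarray*}
S(\beta)_{\lambda}(a)&=&\omega^{\natural}(r_0^{\sharp}(\beta))_{\lambda}(a)=\{r_0^{\sharp}(\beta)_{\lambda}a\}_{\omega},\\
N^{*}(\beta)_{\lambda}(a)&=&\beta_{\lambda}\bigl(r_0^{\sharp}(\omega^{\natural}(a))\bigr)=\langle\beta,r_0^{\sharp}(\omega^{\natural}(a))\rangle_{\lambda}.
\end{eqnarray*}
Applying skew-symmetry of $r_0^{\sharp}$ (a consequence of $r=-r^{21}$, as derived in the proof of Proposition \ref{pro:compatible r-matrix}) followed by skew-symmetry of $\omega$ turns the right-hand side of the second line into $\{r_0^{\sharp}(\beta)_{\lambda}a\}_{\omega}$, so $S=N^{*}$.

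With $S=N^{*}$ in hand, the triple $((\omega^{\natural})^{-1},N,N^{*})$ is an $\GRBN$-structure on $(A;\ad^*)$, and Theorem \ref{thm:SN-ON} immediately yields that $(\omega,N=r_0^{\sharp}\circ\omega^{\natural})$ is a symplectic-Nijenhuis structure on $A$. The only non-mechanical step is the verification $S=N^{*}$; I expect this to be the main technical point, since it requires juggling the two distinct skew-symmetry conventions (the $\lambda\mapsto-\lambda-\partial$ twist for the conformal pairing appearing in $r_0^{\sharp}$, and the skew-symmetry satisfied by the $2$-form $\omega$) and ensuring the sign contributions cancel correctly.
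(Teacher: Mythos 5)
Your proposal is correct and follows exactly the route the paper intends: the paper's proof of this corollary is literally the citation of Theorems \ref{thm:ON construction} and \ref{thm:SN-ON}, and you have filled in the intermediate steps (identifying $\tilde{r}_0^{\sharp}=(\omega^{\natural})^{-1}$ via \eqref{eq:non-degenerate r-matrix}, transporting compatibility of $r$-matrices to compatibility of $\huaO$-operators through Lemma \ref{lem:equivalence r-matrix}, and checking $S=\omega^{\natural}\circ r_0^{\sharp}=N^{*}$) in the way the authors leave implicit. The verification $S=N^{*}$ that you single out is indeed the only delicate point, and it goes through once the skew-symmetry of the $2$-form is read, consistently with \eqref{eq:symplectic1}--\eqref{eq:symplectic2}, as $\{a_\lambda b\}_\omega=-\{b_{-\lambda}a\}_\omega$.
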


By Proposition \ref{pro:GRBN-invertible} and Theorem \ref{thm:SN-ON}, we have
\begin{cor}
If $(\omega, N)$ is a symplectic-Nijenhuis structure on a Lie conformal algebra $A$, then  for any $k\in\Nat$, $(\omega,N^k)$ is a symplectic-Nijenhuis structure on the Lie conformal algebra $A$.
\end{cor}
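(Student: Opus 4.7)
The plan is to bounce back and forth between the symplectic-Nijenhuis picture on $A$ and the $\GRBN$-picture on the \LP pair $(A;\ad^*)$ via Theorem \ref{thm:SN-ON}, and then apply the $k$-th power statement for $\GRBN$-structures given by Proposition \ref{pro:GRBN-invertible}.

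First, starting from a symplectic-Nijenhuis structure $(\omega,N)$ on the finite Lie conformal algebra $A$, Theorem \ref{thm:SN-ON} upgrades it to an $\GRBN$-structure $((\omega^{\natural})^{-1},\,N,\,N^{*})$ on the \LP pair $(A;\ad^{*})$. Because $\omega$ is nondegenerate, $\omega^{\natural}\colon A\to A^{*c}$ is a $\Comp[\partial]$-module isomorphism, so the $\GRB$-operator $T:=(\omega^{\natural})^{-1}$ that appears in this triple is invertible. This is precisely the hypothesis required by Proposition \ref{pro:GRBN-invertible}, which then yields that $(T,\,N^{k},\,(N^{*})^{k})$ is again an $\GRBN$-structure on $(A;\ad^{*})$ for every $k\in\Nat$.

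Next I would check the bookkeeping identity $(N^{*})^{k}=(N^{k})^{*}$. Unraveling the definition of the dual map, for $\alpha\in A^{*c}$, $a\in A$ one has $((N^{*})^{k}\alpha)_{\lambda}a=\alpha_{\lambda}(N^{k}a)=((N^{k})^{*}\alpha)_{\lambda}a$, so the two agree as elements of $\mathrm{gc}(A^{*c})$; this step is a one-line induction. Substituting this identification, the $\GRBN$-triple obtained in the previous paragraph is $((\omega^{\natural})^{-1},\,N^{k},\,(N^{k})^{*})$.

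Finally I would invoke Theorem \ref{thm:SN-ON} in the converse direction: an $\GRBN$-structure on $(A;\ad^{*})$ of the form $((\omega^{\natural})^{-1},M,M^{*})$, with $M$ a Nijenhuis operator on $A$ (automatic, since $(N^{k},(N^{k})^{*})$ is a Nijenhuis structure on the pair), translates back to the statement that $(\omega,M)$ is a symplectic-Nijenhuis structure on $A$. Taking $M=N^{k}$ gives the desired conclusion. No step in this chain looks genuinely hard: the only place requiring any verification beyond quoting the earlier results is the compatibility $(N^{*})^{k}=(N^{k})^{*}$, and this is immediate from the definition of the conformal dual.
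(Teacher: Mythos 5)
Your proposal is correct and follows exactly the paper's route: the paper derives this corollary precisely by combining Proposition \ref{pro:GRBN-invertible} with Theorem \ref{thm:SN-ON}, using the invertibility of $(\omega^{\natural})^{-1}$ coming from non-degeneracy. Your additional verification that $(N^{*})^{k}=(N^{k})^{*}$ is a correct (if implicit in the paper) bookkeeping step.
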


\noindent
{\bf Acknowledgements. } This research was supported by the National Key Research and Development Program of China (2021YFA1002000), the National Natural Science Foundation of China (11901501), the China Postdoctoral Science Foundation (2021M700750) and the Fundamental Research Funds for the Central Universities (2412022QD033).

\vspace{3mm}
\noindent

\end{document}